\documentclass[letterpaper,11pt]{article}

\usepackage[Arxiv]{optional}

\usepackage[dvipsnames]{xcolor}

\usepackage{tikz-cd}
\usepackage{tikz}
\usepackage{adjustbox}
\usepackage{afterpage}
\usepackage{enumerate}
\usepackage[shortlabels]{enumitem}
\usepackage[linesnumbered,algoruled,boxed,lined,noend]{algorithm2e}
\usepackage[normalem]{ulem}
\usepackage{multirow}
\usepackage{amsmath, 
            amsthm,
            amssymb,
            mathtools,
            thmtools}
\usepackage{extarrows}
\usepackage{hyperref}
\usepackage{float}
\usepackage[all]{xy}
\usepackage{array}
\usepackage[mathlines]{lineno}

\usepackage[margin=1in]{geometry}

\theoremstyle{plain} 
\newtheorem{theorem}{Theorem}[section]
\newtheorem{lemma}[theorem]{Lemma}
\newtheorem{proposition}[theorem]{Proposition}
\newtheorem{corollary}[theorem]{Corollary}

\theoremstyle{definition}
\newtheorem{definition}[theorem]{Definition}

\usepackage{amssymb}

\def\refeq#1{\if\workingver y(\ref{#1})-[[#1]]\else(\ref{#1})\fi}
\def\refth#1{\if\workingver y\ref{#1}-[[#1]]\else\ref{#1}\fi}
\def\mylabel#1{\if\workingver y\label{#1}{\bf\ \ [[#1]]\ \ }\else\label{#1}\fi}
\def\mybibitem#1{\if\workingver y\bibitem{#1}{\bf\ \ [[#1]]\ \
}\else\bibitem{#1}\fi}

\renewcommand{\emptyset}{\varnothing}
\renewcommand{\rho}{\varrho}
\renewcommand{\phi}{\varphi}
\renewcommand{\epsilon}{\varepsilon}

\def\cA{\text{$\mathcal A$}}
\def\cB{\text{$\mathcal B$}}

\def\cV{\text{$\mathcal V$}}

\def\cX{\text{$\mathcal X$}}

\newcommand{\id}{\operatorname{id}}
\newcommand{\cl}{\operatorname{cl}}

\newcommand{\im}{\operatorname{im}}
\newcommand{\coker}{\operatorname{coker}}
\newcommand{\coim}{\operatorname{coim}}

\renewcommand{\emptyset}{\varnothing}

\def\begeq#1{\begin{equation}\mylabel{#1}}
\def\endeq{\end{equation}}

\def\mathobj#1{\mbox{$#1$}}

\def\PP{\mathobj{\mathbb{P}}}

\def\ZZ{\mathobj{\mathbb{Z}}}

\def\implies{\;\Rightarrow\;}

\def\0#1{\hbox{\kern25pt}$ #1 $\\}
\def\1#1{\hbox{\kern40pt}$ #1 $\\}
\def\2#1{\hbox{\kern55pt}$ #1 $\\}
\def\3#1{\hbox{\kern70pt}$ #1 $\\}

\newcounter{li}

\def\begalg#1{\begin{algo}\mylabel{#1}\normalshape:\small\baselineskip 10pt\\}
\def\endalg{\end{algo}}

\def\Figures(include=#1,cat=#2){
  \renewcommand{\textfraction}{.20}
  \renewcommand{\topfraction}{.80}
  \renewcommand{\bottomfraction}{.80}
  \renewcommand{\floatpagefraction}{.80}
  \newcount\figcount
  \figcount=0
  \let\includefigures=#1
  \def\figcat{#2}
}

\def\FigureFromFile[#1][#2](#3)#4
{
  \begin{figure}[htbp]
     \global\advance\figcount by 1
     \if\includefigures y\special{anisoscale #1.wmf, \the\hsize #2}\fi
     \vspace{#2}
     \caption{#4}
     \mylabel{#3}
   \end{figure}
}

\def\FigureFromFileTwoD[#1][#2,#3](#4)#5
{
  \begin{figure}[htbp]
     \global\advance\figcount by 1
     \if\includefigures y\special{anisoscale #1.wmf, #2 #3}\fi
     \vspace{#2}
     \caption{#5}
     \mylabel{#4}
   \end{figure}
}

\def\FigureF<#1>[#2](#3)#4
{
  \begin{figure}[htbp]
     \global\advance\figcount by 1
     \if\includefigures y\special{anisoscale \figcat/fig\number\figcount.wmf,
       \the\hsize #2}
     \fi
     \if\includefigures p
       \leavevmode
       \epsfxsize=\hsize
       \epsffile{#1}
     \fi
     \if\includefigures y
          \vspace{#2}
     \fi
     \caption{#4}
     \mylabel{#3}
   \end{figure}
}

\def\Figure[#1](#2)#3
{
  \begin{figure}[htbp]
     \global\advance\figcount by 1
     \if\includefigures y\special{anisoscale \figcat/fig\number\figcount.wmf,
       \the\hsize #1}
     \fi
     \if\includefigures p
       \leavevmode
       \epsfxsize=\hsize
       \epsffile{fig\number\figcount.eps}
     \fi
     \if\includefigures y
          \vspace{#1}
     \fi
     \caption{#3}
     \mylabel{#2}
   \end{figure}
}

\DeclareMathOperator{\mo}{mo}

\newcommand{\inscr}{\sqsubseteq}
\newcommand{\ovscr}{\sqsupseteq}
\newcommand{\inovscr}{\Longleftrightarrow}
\newcommand{\xinovscr}[1]{\xLongleftrightarrow{\idxfb{t}}}

\newcommand{\inc}{\kappa}
\newcommand{\facerel}{\prec}
\newcommand{\Zgroup}{\mathsf{Z}}
\newcommand{\Bgroup}{\mathsf{B}}
\newcommand{\Cgroup}{\mathsf{C}}

\newcommand{\FV}{F_\mathcal{V}}
\newcommand{\GV}{G_\mathcal{V}}

\newcommand{\BD}{\cB}
\newcommand{\zzBD}{\mathfrak{B}}
\newcommand{\zzV}{\mathfrak{V}}

\newcommand{\lmap}[1]{\iota_{#1}}
\newcommand{\lmapx}[1]{\iota_{#1\ast}}
\newcommand{\imap}[1]{j_{#1}}
\newcommand{\imapx}[1]{j_{#1\ast}}
\newcommand{\projmap}[1]{\pi_{#1}}
\newcommand{\projmapx}[1]{\pi_{#1\ast}}

\newcommand{\idxmap}[1]{\iota_{#1}}
\newcommand{\idxfb}[1]{\overleftrightarrow{\iota}_{\!\!#1}}
\newcommand{\idxfwd}[1]{{\iota}_{#1}}
\newcommand{\idxbck}[1]{\overleftarrow{\iota}_{\!\!#1}}

\newcommand{\VecSp}{\mathbf{Vec}}
\newcommand{\field}{\Bbbk}

\definecolor{yellow}{RGB}{255,225,55}
\newcommand{\tamal}[1]{\textcolor{blue}{#1}}

\newcommand\michal[1]{\textcolor{ForestGreen}{[ML: #1]}}

\newcommand{\low}{{\tt piv}}

\newcommand{\md}{\mathcal{B}}

\newcommand{\bl}{B}

\definecolor{dark-gray}{RGB}{64,64,64}
\definecolor{medium-gray}{RGB}{114,114,114}
\definecolor{light-gray}{RGB}{190,190,190}

\newcommand{\cancel}[1]

\title{Computing Conley-Morse Persistence Barcode Efficiently by Updating Matrix Decompositions}

\author{Tamal K. Dey \thanks{Department of Computer Science, Purdue University, West Lafayette, Indiana, USA, \texttt{tamaldey@purdue.edu}}
 \and 
    Andrew Haas \thanks{Department of Computer Science, Purdue University, West Lafayette, Indiana, USA, \texttt{haas60@purdue.edu}}
\and
    Micha\l{} Lipi\'nski \thanks{Institute of Science and Technology Austria (ISTA), Klosterneuburg, Austria, \texttt{michal.lipinski@ist.ac.at}}}

\begin{document}

\maketitle

\begin{abstract}
Recent advances in combinatorial dynamical systems that generalize the classic
discrete Morse theory have prompted algorithmic studies of combinatorial vector fields. In this regard, authors in~\cite{CMbarcodes2025} recently proposed the concept of
Conley-Morse persistence barcode that summarizes the continuation of invariant sets in an evolving vector field through homological persistence. 
They proposed an algorithm to compute this barcode using a filtration of the so called \emph{index pairs} on a poset called \emph{transition diagram}. 
The algorithm becomes costly due to multiple runs of zigzag persistence it executes on filtrations of `unwieldy' structures of index pairs. We overcome this difficulty by replacing the index pairs with \emph{blocks},
    which are structurally much simpler. 
These replacements need reversal of certain relations in the
transition diagram resulting in a much simpler algorithm.
The algorithm works by updating matrix decompositions
akin to computing `vineyard' in standard persistence.


\end{abstract}

\newpage

\section{Introduction}

The study of evolving dynamical systems recently appeared in topological data analysis (TDA). This includes work on sampled dynamics~\cite{GuMuKh2022,Kim:2021wx, TyMuKh2020}, time-varying gradient systems
in the language of multiparameter persistence~\cite{Bubenik2024}, and tracking the evolution
of critical points~\cite{King2017, MeLiCh2026, ReininghausHotz2012, Hotz2023}. 
Recent multivector field theory~\cite{LKMW2022, Mrozek2017} generalizing Forman's idea~\cite{Forman1998b} makes it possible to deal with non-gradient systems and offers an elegant  combinatorial framework for continuation theory~\cite{Franzosa1988}.
Based on that, the authors of~\cite{CMbarcodes2025} introduced Conley-Morse persistence barcode which allows for a systematic study of parameterized multivector fields through the lens of Conley index~\cite{Conley1978} and persistent homology~\cite{DW22, EdelsHarer2008}, thus linking bifurcation theory with TDA.


A parameterized combinatorial multivector field on a simplicial complex $X$ is a sequence of multivector fields $\zzV:\cV_1,\cV_2,\ldots,\cV_T$.  
To understand the evolution of the dynamics given by such a sequence, one can track isolated invariant sets~\cite{DeLiMrSl2022,CMbarcodes2025}, e.g., equilibria or periodic orbits generalizing the role of critical cells in Morse theory, 
    and their \emph{Conley indices}~\cite{Conley1978, MischMro_Conley_2002}, a homological invariant generalizing the Morse index.
The total dynamics of $\cV_i$ is characterized by a \emph{block partition} $\cB_i$ consisting of \emph{blocks}.
Each block encapsulates an isolated invariant set, thus, it can also be characterized by the Conley index.

The Conley index is defined as the relative homology of the so called \emph{index pair}. 
In~\cite{DeLiMrSl2022}, the authors noticed that changes in a Conley index across parameterization $\zzV$ can be expressed in the language of zigzag persistence~\cite{CaSiMo2009-zigzag}, through zigzag filtration of index pairs.
In~\cite{CMbarcodes2025}, the authors showed that Conley indices of all blocks in $\cB_i$ can be tracked simultaneously, which additionally allows one to study interactions between them.
This is achieved by constructing a \emph{transition diagram}, a filtration of index pairs over a poset $\bar{\PP}$, 
    which, in turn, induces \emph{Conley-Morse persistence module}, also over the poset $\overline{\PP}$. 
Although the poset $\overline{\PP}$ supporting this persistence module is not an $A_n$ type quiver (zigzag path),
    the supports of the indecomposables are zigzag paths forming \emph{Conley-Morse persistence barcode}. 
Essentially, the barcode represents the evolution of the Conley indices over $\zzV$.

The algorithm presented in~\cite{CMbarcodes2025} for the computation of the barcode runs incrementally over the parameter value while executing multiple runs of zigzag persistence algorithms~\cite{DW22,milosavljevic2011zigzag} on zigzag filtrations of `unwieldy' index pairs. 
Naturally, the algorithm becomes costly. 

In this paper, we show that a much simpler algorithm can compute the same Conley-Morse
persistence barcode. 
The key idea is to construct an alternative transition diagram over a poset $\PP$ based on the inclusion of blocks instead of index pairs.
However, these inclusions do not necessarily induce homomorphisms in homology.
As a remedy, we construct them \emph{indirectly} by `reversing' maps induced by inclusions for index pairs;
    though our algorithm does it implicitly. 
We show that the new persistence module supported over
    the modified transition diagram still provides the same Conley-Morse persistence barcode (Proposition~\ref{prop:CMmodule-simplified}).
The main upshot of this observation is that inclusions among the blocks at the
chain level allow us to design an algorithm that operates on updating matrix
decompositions akin to the well known `vineyard' computations for standard persistence~\cite{CohEdeMor2006}. 
Essentially, the algorithm maintains a $R_t=D_tU_t$ decomposition for each block boundary matrix $D_t$ for the vector field $\cV_t$ and, instead of
computing the decomposition afresh, it updates the current decomposition to
the next one at $t+1$ while tracking the bars in the barcode.

\section{Preliminaries}
A pair $(X, \kappa)$ is called a \emph{Lefschetz complex over a fixed field $\Bbbk$}, 
    where $X=\{X_q\}_{q\in{\mathbb{Z}}}$ is a finite set with~$\ZZ$ gradation, 
    and $\kappa:X\times X\rightarrow\Bbbk$, called \emph{incidence coefficient}, satisfies:
    $\inc(\sigma, \tau) \neq 0$ implies $\sigma\in X_q$ and $\tau\in X_{q-1}$ where $q\in\ZZ$; 
    and, for any $\sigma,\tau\in X$ we have 
    $\sum_{\mu\in X} \kappa(\sigma,\mu)\kappa(\mu,\tau) = 0$.
We call the elements of $X$ \emph{cells}.
In particular, any simplicial complex is a Lefschetz complex.
The incidence coefficient induces a partial order on cells -- the face relation -- denoted $\facerel$. 
A~subset $A\subset X$ is \emph{locally closed} if $\sigma,\tau\in A$, $\mu\in X$, $\sigma\facerel \mu\facerel \tau$ imply $\mu\in A$. 
Similarly, $A$ is a \emph{closed} in $X$ if $\tau\in A$, $\sigma\in X$, $\sigma\facerel \tau$ imply $\sigma\in A$.
The \emph{mouth} of a set $A$ is defined as $\mo A\coloneqq\cl A\setminus A$.
Equivalently, $A$ is locally closed if and only if $\mo A$ is closed.

A subset $A\subset X$ is a \emph{Lefschetz subcomplex} of $X$ if $\kappa$ restricted to $A$ still satisfies the definition. 
In particular, $A$ is a Lefschetz subcomplex if and only if it is locally closed in $X$ \cite[Theorem~3.1]{MroBat2009}.

Lefschetz complex induces a chain complex $(\Cgroup(X),\partial)$, where each $\Cgroup_q(X)$ is spanned by elements of $X_q$, 
    and the differential $\partial_q:\Cgroup_q(X) \rightarrow \Cgroup_{q-1}(X)$ is defined on basis elements $\sigma\in \Cgroup_q(X)$ as
    \begin{equation}\label{eq:boundary_homomorphism}
        \partial_q(\sigma) \coloneqq \sum_{\tau\in X} \inc(\sigma, \tau) \tau.
    \end{equation}
The subgroups of cycles and boundaries are defined in the standard way, that is $\Zgroup_q(X)\coloneqq\ker\partial_q$ and $\Bgroup_q(X)\coloneqq\im\partial_{q+1}$, respectively.
Then, the \emph{Lefschetz homology} of $X$ is $H_q(X)\coloneqq \Zgroup_q(X)/\Bgroup_q(X)$
and we write $H(X)\coloneqq\bigoplus_{q\in\mathbb{Z}}H_q(X)$.

Let $A$ be a closed subcomplex in $X$.
Then, the relative chain group $\Cgroup_q(X,A)$ consists of equivalence classes of form $c+\Cgroup_q(A)$, where $c\in\Cgroup_q(X)$.
Boundary homomorphism
    induces     $\partial_q^{(X,A)}:\Cgroup_q(X,A)\rightarrow\Cgroup_{q-1}(X,A)$, 
        and therefore we have 
        $\Zgroup_q(X,A)\coloneqq\ker\partial_q^{(X,A)}$, 
        $\Bgroup_q(X,A)\coloneqq\im\partial_{q+1}^{(X,A)}$, 
        and $H_q(X,A)\coloneqq \Zgroup_q(X,A)/\Bgroup_q(X,A)$ for relative cycles, boundaries and Lefschetz homology.
We skip the superscript in $\partial_q^{(X,A)}$ when it is clear that it is a relative boundary homomorphism.
As in the regular case, we write $H(X,A)\coloneqq\bigoplus_{q\in\mathbb{Z}}H_q(X,A)$.

If $X$ is a simplicial complex, the simplicial and the Lefschetz homology coincide. 
However, Lefschetz homology is particularly useful when dealing with locally closed sets.
\begin{proposition}\cite[Proposition~3.5.8]{MroWan2025}\label{prop:lefschetz_relative_homology}
    Let $A$ be a locally closed subset of a Lefschetz complex~$X$.
    Let $D$, $E$ be closed sets such that $E\subset D$ and $D\setminus E=A$, then
    we have an isomorphism $H(A)\cong H(D, E)$ in Lefschetz homology.
    \label{prop:lefschetzhom}
\end{proposition}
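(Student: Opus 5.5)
The plan is to prove the isomorphism already at the chain level, by identifying the relative chain complex $\Cgroup(D,E)$ with the chain complex $\Cgroup(A)$ of the Lefschetz subcomplex $A$; passing to homology then gives $H(A)\cong H(D,E)$. First we check that the right-hand side is meaningful. $E$ is closed in $X$ and lies inside $D$, so $E$ is closed in $D$. Since $D$ is closed, it is locally closed, hence a Lefschetz subcomplex by \cite[Theorem~3.1]{MroBat2009}. Likewise $A$ is locally closed, so $(A,\inc|_{A\times A})$ is a Lefschetz complex and $\Cgroup(A)$ is defined.

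Next we write down the map. The cells of $D$ split disjointly as $E\sqcup A$, so in each degree $q$ we have $\Cgroup_q(D)=\Cgroup_q(E)\oplus\Cgroup_q(A)$ as vector spaces. Consequently the projection $\Cgroup_q(A)\to \Cgroup_q(D)/\Cgroup_q(E)=\Cgroup_q(D,E)$, given by $c\mapsto c+\Cgroup_q(E)$, is a linear isomorphism in every degree. We call this map $\phi$.

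The main step is to show that $\phi$ commutes with the boundaries, i.e.\ $\partial^{(D,E)}\phi(\sigma)=\phi(\partial^{A}\sigma)$ for each cell $\sigma\in A$. Here $\partial^{A}\sigma=\sum_{\tau\in A}\inc(\sigma,\tau)\tau$, while $\partial^{(D,E)}\phi(\sigma)=\sum_{\tau\in D}\inc(\sigma,\tau)\tau+\Cgroup(E)$, which also agrees with the boundary of $\sigma$ computed in $X$, because every $\tau$ with $\inc(\sigma,\tau)\neq0$ is a face of $\sigma\in D$ and so lies in the closed set $D$. The two sums differ only by the terms with $\tau\in D\setminus A=E$, and those terms vanish modulo $\Cgroup(E)$. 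Hence $\phi$ is an isomorphism of chain complexes.

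The only point needing care is whether $\partial^A$ squares to zero. This follows from local closedness of $A$, which is exactly what the cited theorem provides, or alternatively from the chain isomorphism itself, since $\partial^{(D,E)}$ squares to zero. A chain isomorphism induces an isomorphism on homology in every degree, and summing over $q$ gives $H(A)\cong H(D,E)$.
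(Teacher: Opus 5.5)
Your proof is correct and complete. Since $D=E\sqcup A$ with $E$ closed, the map $c\mapsto c+\Cgroup(E)$ is a degree-preserving linear isomorphism $\Cgroup(A)\to\Cgroup(D,E)$. It intertwines $\partial^A$ with $\partial^{(D,E)}$ because $\partial^D\sigma-\partial^A\sigma$ is supported in $E$.

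The paper gives no proof of its own; it simply imports the result from \cite{MroWan2025}. Your chain-level identification is the standard argument for this fact. As you noted, the chain isomorphism also gives $(\partial^A)^2=0$ directly, so the argument does not need the Mrozek--Batko theorem. Local closedness of $A$ is in any case automatic from $A=D\setminus E$.
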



\subsection{Combinatorial Multivector Fields}
    \label{subsec:mvf}
    
\begin{figure}
    \centering
    \includegraphics[width=1.0\linewidth]{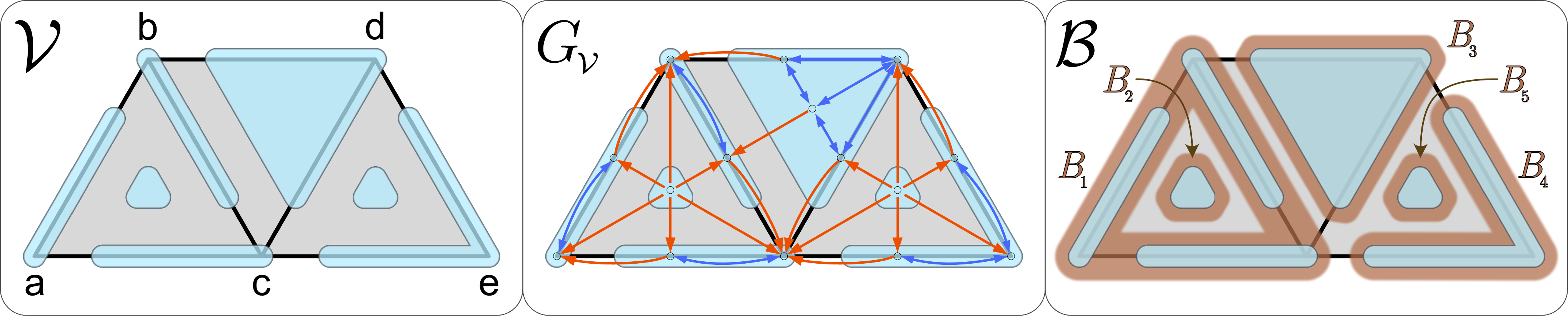}
    \caption{
    Left: a multivector field $\cV$ on a simplicial complex. 
    Right: the corresponding graph $G_{\cV}$.
    Blue arrows represent edges within multivectors, while red ones are across the multivectors. Self loops are omitted for clarity.
    Bottom: minimal block partition of $\cV$ highlighted with brown color.
    }
    \label{fig:mvf-example}
\end{figure}

A partition $\cV$ of a Lefschetz complex $X$ into locally closed subsets is called a \emph{multivector field}. We refer to elements of $\cV$ \emph{multivectors}.
Each cell $\sigma\in X$ is contained in exactly one multivector $V\in\cV$, which we denote by $[\sigma]_{\cV}$.
A multivector field $\cV$ induces a multivalued map $\FV(\sigma)\coloneqq\cl \sigma\cup [\sigma]_\cV$.
The map~$\FV$ can be interpreted as a directed graph $\GV$ on $X$ with directed edges $(\sigma,\tau)$, such that $\tau\in\FV(\sigma)$.
The left-top panel in Figure~\ref{fig:mvf-example} shows an example of a multivector field consisting of seven multivectors (blue sets), 
    and the right-hand panel shows the corresponding graph $G_\cV$.
For a more detailed introduction to the multivector fields theory we refer to~\cite{LKMW2022}.

An \emph{isolating block} (or simply a \emph{block}) is a locally closed subset $B\subset X$ which is  a union of multivectors in $\cV$.
A partition $\cB=\{B_p\mid p\in P\}$ of $X$ is a \emph{block partition}~\cite{CMbarcodes2025} (or an acyclic partition~\cite{MroWan2025}) if each element $B_p\in\cB$ is an isolating block
    and there exists a partial order $(P, \leq)$ 
    such that presence of a path from $\sigma\in B_q$ to $\tau\in B_p$ in $G_\cV$ implies $p\leq q$.
We use this partial order called an \emph{admissible order} later 
to organize the boundary matrices.
A canonical block partition---\emph{the finest block partition}---is a collection of all strongly connected components of $G_\cV$.
The bottom panel of Figure~\ref{fig:mvf-example} presents the finest block partition with highlighted brown blocks.

Let $\cA$ and $\cB$ be families of subsets of $X$.
We say $\cA$ is a \emph{refinement} of $\cB$ if for every $A\in\cA$ there exists $B\in\cB$ so that $A\subset B$ and
denote this relation by $\cA\sqsubseteq\cB$.
If each element of $\cB$ is a union of at most two other elements of $\cA$ then we call $\cA$ an \emph{atomic refinement} of $\cB$.

Since a refinement of a multivector field can be interpreted as a perturbation of a combinatorial dynamical system (see~\cite[Section~4.4]{CMbarcodes2025}), 
a sequence of multivector fields by
\begin{align}\label{eq:sequence-of-multivector-fields}
    \zzV: \cV_0 \inovscr \cV_1 \inovscr \cV_2 \inovscr \ldots \inovscr \cV_T,
\end{align}
where $\cA \inovscr \cA'$ means that either $\cA\inscr\cA'$ or $\cA\ovscr\cA'$, 
    represents a parameterized combinatorial dynamical system.
It induces a sequence of the finest block partitions~\cite[Proposition~5.2]{CMbarcodes2025}:
\begin{equation}\label{eq:BD_zigzag_filtration}
    \zzBD: \BD_0 \inovscr \BD_1 \inovscr \BD_2 \inovscr \ldots \inovscr \BD_T,
\end{equation}
which we call a \emph{zigzag filtration of block partitions}.
In this paper we assume that at each step of $\zzBD$ we have an atomic refinement. 
However, the authors of~\cite{CMbarcodes2025} show how to reduce any filtration $\zzBD$ into a filtration with only atomic refinements.

Figure~\ref{fig:sequence-of-block-decompositions} (top row) shows five multivector fields on a 2-simplex forming a zigzag sequence 
    $\cV_0\ovscr\cV_1\ovscr\cV_2\inscr\cV_3\ovscr\cV_4$ of atomic refinements. 
The corresponding zigzag filtration of the block partitions is presented in the bottom row of Figure~\ref{fig:sequence-of-block-decompositions}. 
Note that we obtained a sequence of atomic refinements the same type: 
$\cB_0\ovscr\cB_1\ovscr\cB_2\inscr\cB_3\ovscr\cB_4$.
    


\section{From filtration of block partitions to modules and barcode}
\textbf{Indexing poset $\mathbb{P}$ for the filtration.}
Consider a zigzag filtration of block partitions~\eqref{eq:BD_zigzag_filtration}.
Denote the indexing set of $\cB_t$ by $P_t$, and 
    the elements of $\cB_t$ by $B_{p,t}$, where $p\in P_t$.
To denote refinements in both forward and backward directions, 
we use
the notation $t\pm 1$ which means either $t+1$ or $t-1$. By definition, 
    whenever $\cB_t\inscr\cB_{t\pm 1}$, for each $p\in P_t$ we have $q\in P_{t\pm 1}$ such that $B_{p,t}\subset B_{q,t\pm 1}$.
Let the corresponding inclusion be
    $\idxfwd{(p,t),(q,t\pm 1)}: B_{p,t} \hookrightarrow B_{q,t\pm 1}$. We write
    $\idxfwd{p,q}:=\idxfwd{(p,t),(q,t\pm 1)}$ when the dependence on $t$ is clear.
We have a poset $\mathbb{P}$ on elements $\bigcup_{t\in [0,T]}\{(p,t)\}_{p\in P_t}$ whose Hasse diagram is determined by
inclusions $\idxfwd{p,q}$. Essentially, in the Hasse diagram of $\mathbb{P}$, we have
$(p,t)\leq_{\mathbb{P}} (q,t\pm 1)$ if and only if 
    $\cB_t\inscr\cB_{t\pm 1}$ and $B_{p,t}\subset B_{q,t\pm 1}$.
The poset $\mathbb{P}$ is graded by $t\in [0,T]$ where we denote $\mathbb{P}_t=\{(p,t)\}_{p\in P_t}$.
The filtration~$\zzBD$ in~\eqref{eq:BD_zigzag_filtration} can alternatively
be viewed as a filtration of blocks indexed by the poset $\mathbb{P}$. 
Figure~\ref{fig:index-pairs-diagram} (left) shows this filtration for our running example.
    \cancel{
Globally we have a map
    $\idxfwd{t}:\bigoplus_{p\in\mathbb{P}_t}B_{p,t} \rightarrow \bigoplus_{q\in\mathbb{P}_{t+1}}B_{q,t+1}$ defined as 
    $\idxfwd{t}\coloneqq\bigoplus_{p\in{\mathbb{P}}_t}\idxfwd{p,t}$
    and $\idxbck{t}:\bigoplus_{q\in\mathbb{P}_{t+1}}B_{q,t+1} \rightarrow \bigoplus_{p\in\mathbb{P}_{t}}B_{p,t}$ defined as 
$\idxbck{t}\coloneqq\bigoplus_{p\in{\mathbb{P}}_t}\idxfwd{p,t}$,.

We write $\idxmap{t}$ when we do not specify the direction of the inclusion.

We rewrite~\eqref{eq:BD_zigzag_filtration} as
\begin{equation}\label{eq:BD_zigzag_filtration_2}
    \zzBD: 
        \bigoplus_{p\in\mathbb{P}_0} B_{p,0} \xlongleftrightarrow{\idxmap{0}} 
        \bigoplus_{p\in\mathbb{P}_1} B_{p,1} \xlongleftrightarrow{\idxmap{1}}  
        \ldots \xlongleftrightarrow{\idxmap{T-1}} 
        \bigoplus_{p\in\mathbb{P}_T} B_{p,T},
\end{equation}
which already looks like a standard zigzag filtration.
}

\afterpage{
\begin{figure}[ht]
    \centering
    \includegraphics[width=0.9\linewidth]{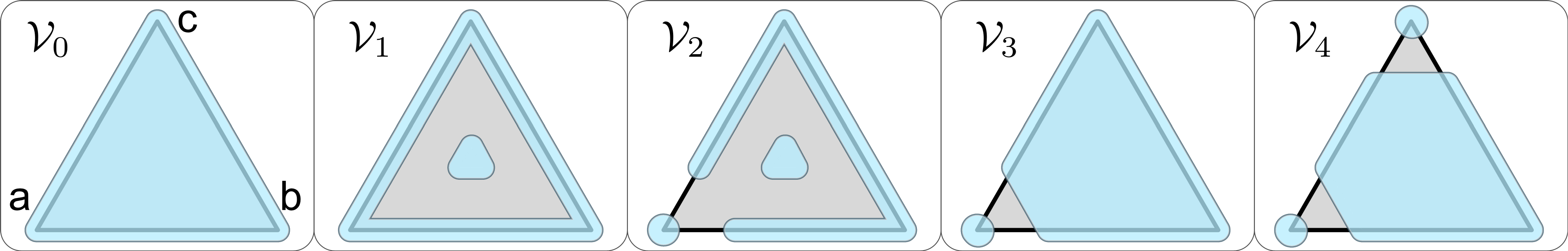}
    \vspace{0.05cm}
    
    \includegraphics[width=0.9\linewidth]{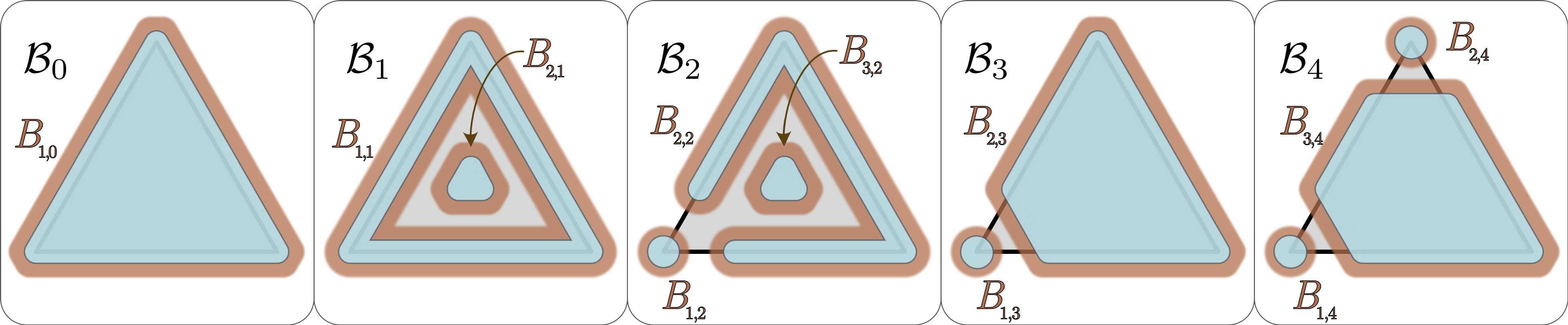}
    \caption{A parameterized multivector field (top) and the corresponding block partitions (bottom).}
    \label{fig:sequence-of-block-decompositions}
\end{figure}
\begin{figure}[ht]
    \centering
    \includegraphics[width=0.45\linewidth]{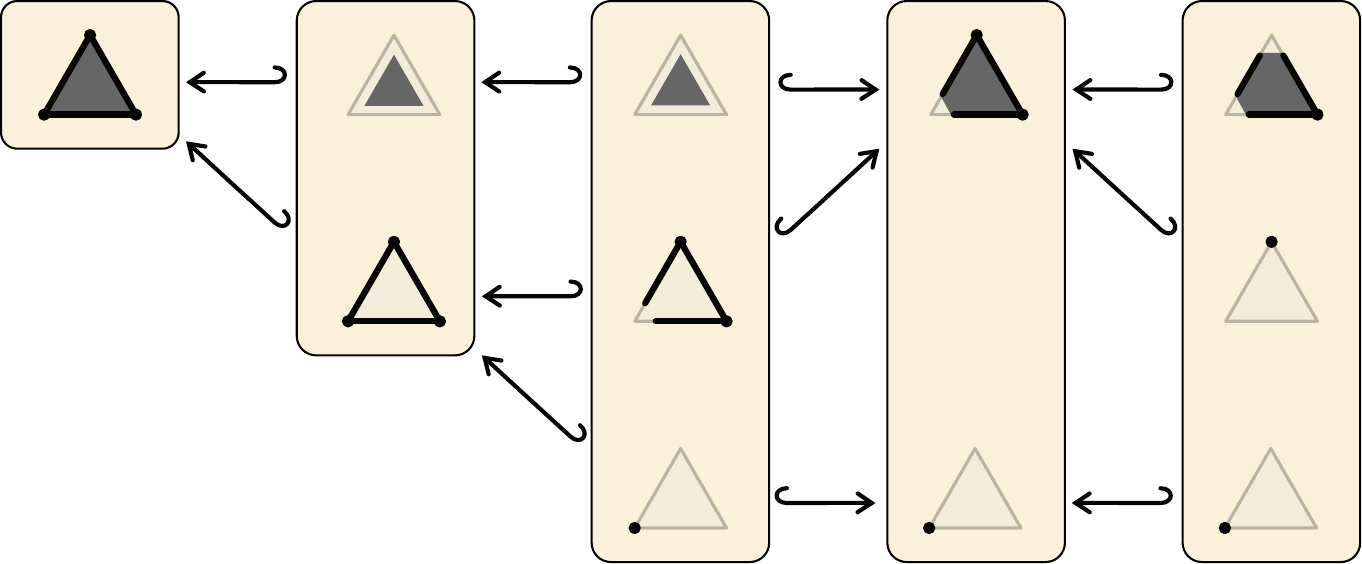}
    \hspace{0.5cm}
    \includegraphics[width=0.48\linewidth]{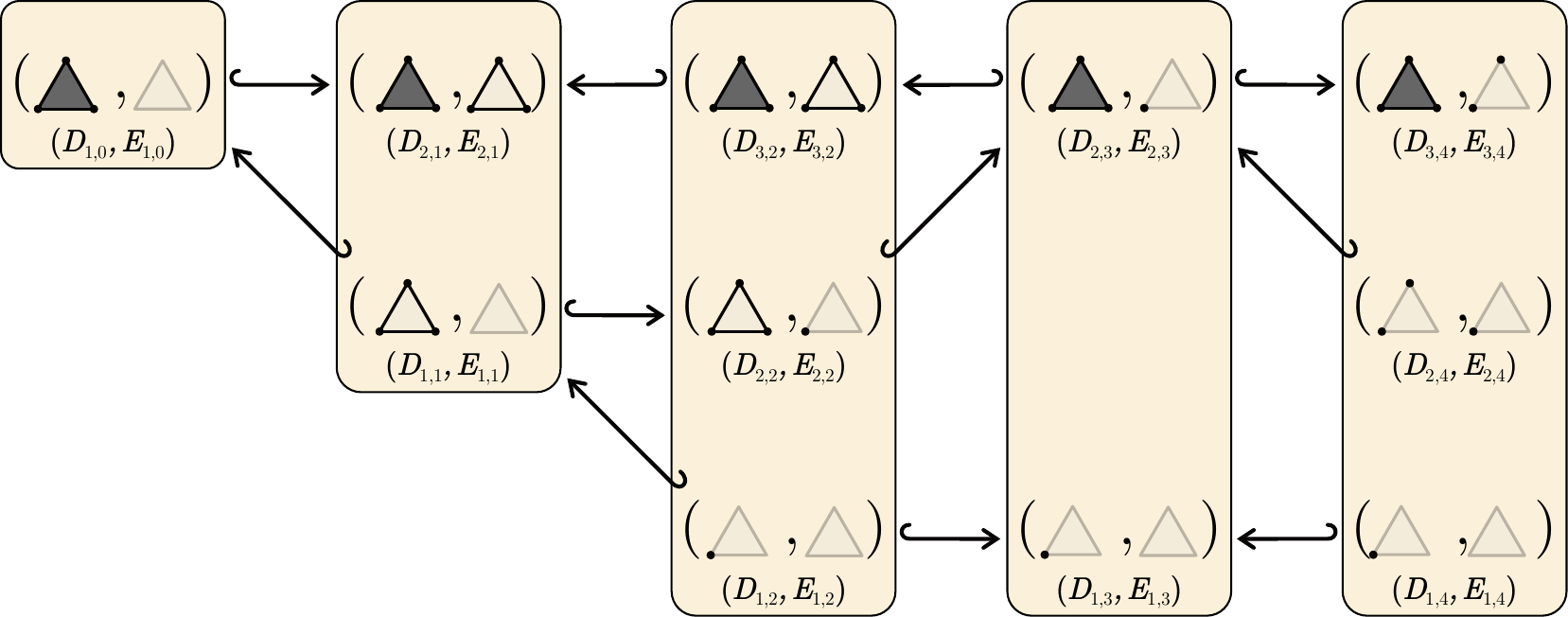}
    \caption{Left: Filtration of blocks from the example in Figure~\ref{fig:sequence-of-block-decompositions}.
        Right: Corresponding index pair's transition diagram for the example.
        }
    \label{fig:index-pairs-diagram}
\end{figure}
\begin{figure}[ht]
    \centering
    \includegraphics[width=0.47\linewidth]{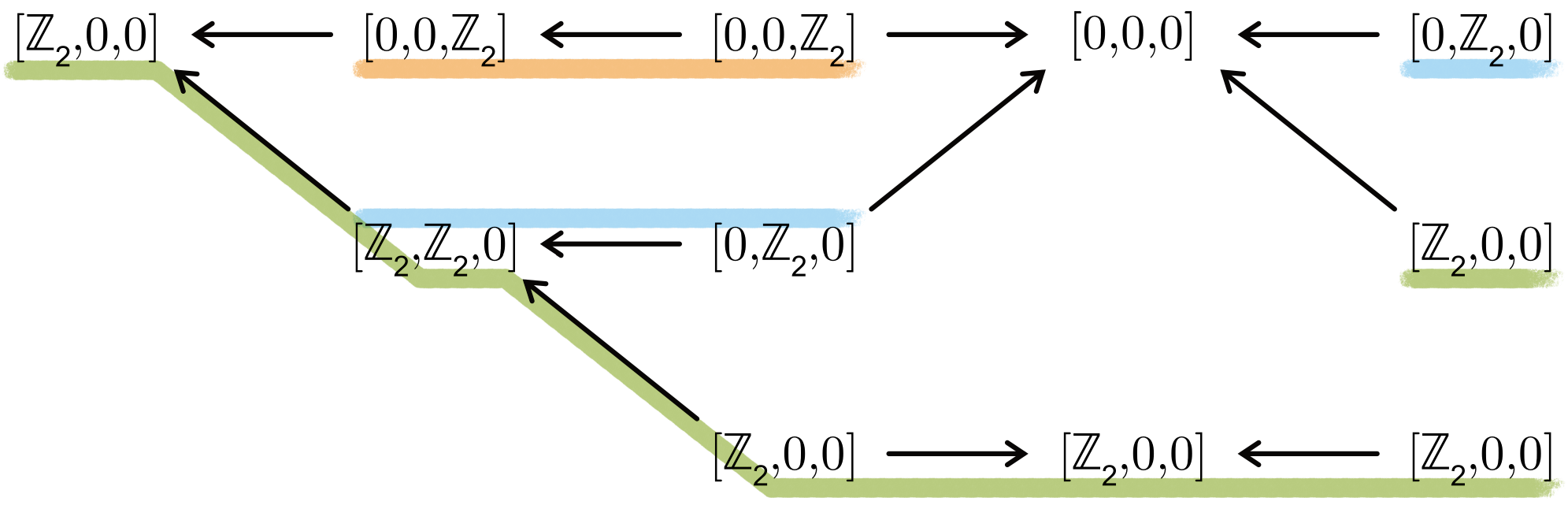}
    \hspace{0.5cm}
    \includegraphics[width=0.47\linewidth]{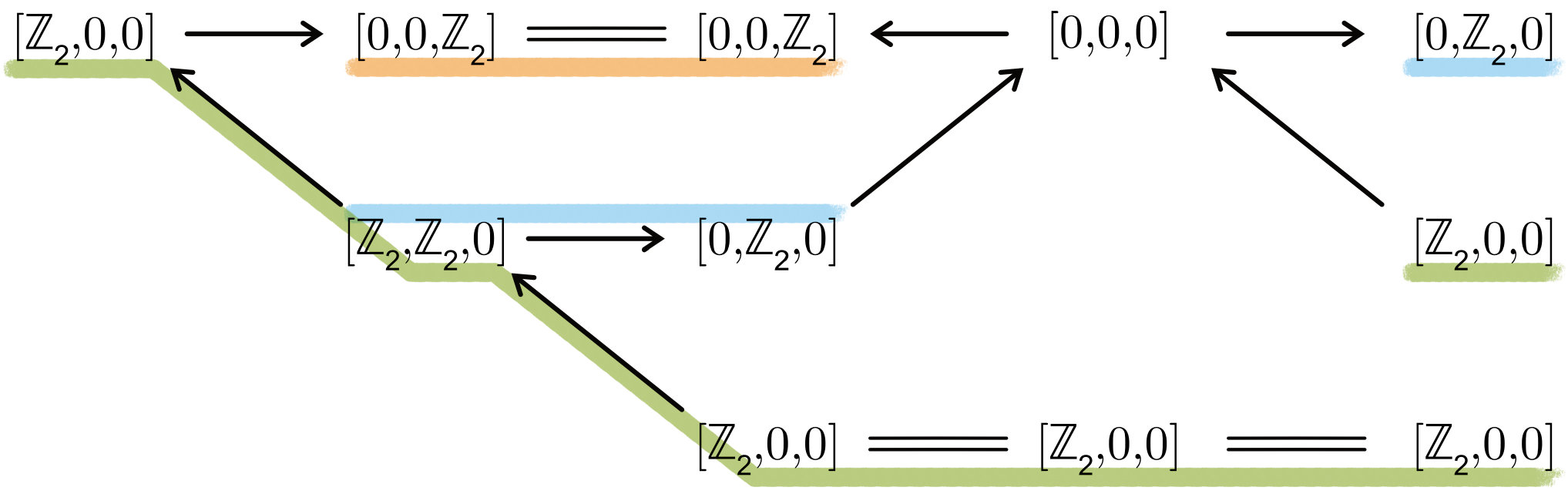}
    \caption{Conley-Morse persistence modules and barcodes induced by the filtration of blocks (left) and by the transition diagram (right) in Figure~\ref{fig:index-pairs-diagram}.
    The green bars correspond to $0$-homology, blue to $1$-homology, orange to $2$-homology.}
    \label{fig:conley-morse-persistence-module-and-barcode}
\end{figure}
}

By Proposition~\ref{prop:lefschetzhom} we can define the \emph{Conley index} of a block $B_{p,t}$ as $H(B_{p,t})$.
A typical TDA pipeline uses inclusions to obtain linear maps on homology, yielding a persistence module. 
This, however, does not work in our context
because the inclusion of a Lefschetz complex $B_{p,t}$ into $B_{q,t\pm 1}$ induces a well defined homomorphism only if  $B_{p,t}$ is closed in $B_{q,t\pm 1}$;
    otherwise, the cycles and boundaries in $B_{p,t}$ are not sent to the cycles and boundaries in $B_{q,t\pm 1}$.
Often $B_{p,t}$ is not closed in $B_{q,t\pm 1}$. 
For example, block $B_{2,1}$ in Figure~\ref{fig:sequence-of-block-decompositions}, consisting of the sole triangle, is not closed in $B_{1,0}$
from which it splits.
As a result, the triangle is a $2$-cycle in $B_{2,1}$, but 
not in $B_{1,0}$.
Therefore, a straightforward application of the homology functor on a filtration
of blocks indexed by $\mathbb{P}$ may not provide a persistence module with structural maps
induced by inclusions. 

\textbf{Indexing poset $\overline{\mathbb{P}}$ introduced in~\cite{CMbarcodes2025}.}
To overcome the above mentioned difficulty, the authors of~\cite{CMbarcodes2025}
    compute the Conley index using a pair of closed nested sets $(D_{p,t},E_{p,t})$ -- called an \emph{index pair} of a block $B_{p,t}$ -- such that $B_{p,t}=D_{p,t}\setminus E_{p,t}$. 
    Notice that, by Proposition~\ref{prop:lefschetz_relative_homology}, we have $H(B_{p,t})\cong H(D_{p,t},E_{p,t})$.
The inclusions among the index pairs across consecutive grades ($t$-values) may not match
the inclusions among the corresponding blocks, i.e., 
we may have inclusion $(D_{p,t},E_{p,t})\hookrightarrow (D_{q,t+1},E_{q,t+1})$
for index pairs whereas a reverse inclusion $B_{p,t} \hookleftarrow B_{q,t+1}$ for blocks.
For example, compare the inclusion arrows between the index pairs for $B_{2,3}$
and $B_{3,4}$ in Figure~\ref{fig:index-pairs-diagram} (right) and the arrows between $B_{2,3}$ and $B_{3,4}$ themselves in Figure~\ref{fig:index-pairs-diagram} (left).


Whenever a block $B$ splits into blocks $B_1$ and $B_2$, 
    one of them is necessarily closed in $B$ and the other is open in $B$.
Assume that $B_1$ is closed and $B_2$ is open.
As shown in \cite[Section~5.4]{CMbarcodes2025} one can always construct index pairs related by inclusions.
For instance, the triple $(N_2,N_1,N_0)\coloneqq(\cl B, \cl B_1\cup \mo B, \mo B)$ leads to a nested sequence of index pairs
    $(D_1, E_1)\coloneqq(N_1, N_0)$, 
    $(D, E)\coloneqq(N_2, N_0)$, 
    and 
    $(D_2, E_2)\coloneqq(N_2, N_1)$, for 
        $B_1$, $B$, and $B_2$, respectively.


Similarly to poset $\mathbb{P}$, now we have a poset $\overline{\mathbb{P}}$ graded by
$t\in [0,\bar{T}]$ that indexes the filtration of index pairs. The relative homology groups $H(D_{p,t},E_{p,t})$ of the index pairs, and the linear maps
induced by inclusions among them, provide a functor called the \emph{Conley-Morse persistence module}
$\overline{\mathcal G}=\overline{\mathcal G}(\zzBD):\overline{\mathbb{P}}\rightarrow \VecSp$ where $\VecSp$ is
the category of finite dimensional vector spaces over a fixed field $\field$. 
In~\cite{CMbarcodes2025}, it is shown  that 
$\overline{\mathcal G}(\zzBD)$ admits a decomposition (unique up to isomorphism)
$\overline{\mathcal G}(\zzBD)= \bigoplus_\pi \mathbb{I}_\pi$ where
each indecomposable $\mathbb{I}_\pi$ is a \emph{path module} defined as follows:
    a path $\pi$ in $\overline{\mathbb {P}}$ is a sequence $(p_0,t+0),(p_1,t+1),\ldots, (p_k,t+k)$ where
$p_i\in P_{t+i}$ for a $t\in [0,\bar T]$. A persistence module $ \mathbb{I}_\pi:\overline{\mathbb{P}}\rightarrow \VecSp$ is a path module with support
$\pi$ if 
$\mathbb{I}_\pi(x)\cong \Bbbk$ for
$x\in \pi$ and $0$ otherwise, and
the linear maps $\mathbb{I}_\pi(x\leq_{\overline{\mathbb{P}}} y)$ are
isomorphisms for $x,y\in \pi$ and zero maps otherwise.
The multiset\footnote{For elementary updates considered later, it is indeed a set, which we assume for simplicity in presentation.} of 
paths $\{\pi\}$ that supports the
path modules $\mathbb{I}_{\pi}$ 
is called the \emph{Conley-Morse persistence barcode} of filtration $\zzBD$.

The algorithm in~\cite{CMbarcodes2025} for computing the Conley-Morse persistence barcode proceeds
by considering every grade $t\in [0,\bar T]$ incrementally.
For each $t$, it 
iterates over multiple zigzag paths in $\overline{\mathbb{P}}$
and updates the barcode for each of the zigzag filtrations of index pairs
defined on these zigzag paths. 
The process becomes cumbersome because (i) it involves a `unwieldy' construction
of suitable index pairs, which generally
can be much larger than the corresponding blocks, (ii) multiple runs of the zigzag persistence 
algorithm are needed to cover the poset $\overline{\mathbb P}$ with zigzag paths, and
(iii) multiple index pairs are needed for each block to `glue'
index pairs at one grade to those at the next grade. 
Here we present an algorithm that overcomes all
these difficulties with a much simpler `vineyard'-like update algorithm.

\subsection{Conley-Morse persistence barcode from a simplified module}
\label{sec:construction-of-structural-map-h}
We already saw that the inclusions among blocks in a filtration over the
poset $\mathbb{P}$ does not provide a persistence module which prompted the
authors of~\cite{CMbarcodes2025} to define Conley-Morse persistence module
over a different poset $\overline{\mathbb{P}}$ graded by $t\in [0,\bar T]$ indexing `unwieldy' index pairs.
One of our key observations is that we can still get a persistence module over
a poset $\mathbb{P}$ graded the same way by $t\in [0,\bar T]$ indexing
blocks instead of index pairs through structural maps (homomorphisms)
$h_{p,q}: H(B_{p,t})\rightarrow H(B_{q,t\pm 1})$ which may not be induced directly by inclusions. 
This effectively \emph{reverses} some arrows in $\mathbb{P}$ from $\overline{\mathbb P}$.

To describe the induced homomorphisms $h_{p,q}$, consider
an inclusion
$B_{p,t}\hookrightarrow B_{q,t\pm 1}$. 
If $B_{p,t}=B_{q,t\pm 1}$ then $h_{p,q}:H(B_{p,t})\rightarrow H(B_{q,t\pm 1})$ is an isomorphism.
Otherwise, the two blocks are involved in a split (or equivalently merge in the opposite direction).
Let $\bl$ splits into blocks~$\bl_1$ and~$\bl_2$. 
One of the two sets is necessarily closed in $\bl$, the other is open; let us assume that~$\bl_1$ is closed and $\bl_2$ open in $\bl$.
We can construct a nested sequence of index pairs $(D_1,E_1)\xhookrightarrow{i'}(D,E)\xhookrightarrow{j'}(D_2,E_2)$ corresponding to $\bl_1$, $\bl$ and $\bl_2$, respectively.
These inclusions directly induce homomorphisms; therefore, we obtain the following diagrams:
\begin{equation}\label{eq:ar-split-inclusions}
    \begin{tikzcd}[row sep=tiny, column sep=normal]
        & \bl_2\\
        \bl  
            \arrow[ru, "\idxmap{2}", hookleftarrow, sloped]
            \arrow[rd, "\idxmap{1}", hookleftarrow, sloped, swap]& \\
        & \bl_1 
    \end{tikzcd}
    \hspace{1cm}
    \begin{tikzcd}[row sep=tiny, column sep=normal]
        & (D_2,E_2)\\
        (D,E)
            \arrow[ru, "j'", hookrightarrow, sloped]
            \arrow[rd, "i'", hookleftarrow, sloped, swap]& \\
        & (D_1,E_1)
    \end{tikzcd}
    \hspace{1cm}
    \begin{tikzcd}[row sep=tiny, column sep=small]
        & & H(D_2,E_2) \ar[r,equal] & V'\\
        X' \ar[r,equal] & H(D,E) \arrow[ru, "j_\ast'",sloped] & \\
        & & H(D_1,E_1)\arrow[lu, "i_\ast'",swap, sloped] \ar[r,equal]& W'
    \end{tikzcd}
\end{equation}
where $X'\coloneqq H(D,E)$, $W'\coloneqq H(D_1,E_1)$, and $V'\coloneqq H(D_2,E_2)$. 
Since images and kernels constitute direct summands of respective vector spaces,
    these homology vector spaces split as $W'=W_1'\oplus W_2'$ with 
    $W_2'\coloneqq \ker i_\ast'$, $X'=X_1'\oplus X_2'$ with $X_1'\coloneqq\im i_\ast'$,
    and $V'=V_1'\oplus V_2'$ with $V_2'\coloneqq\im j_\ast'$ 
where the summands $W_1'$, $X_2'$, and $V_1'$ are not uniquely determined. We work with a fixed choice for them.

{\bf Splits for blocks}: 
Similarly, define vector spaces
$X\coloneqq H(\bl)$, $W\coloneqq H(\bl_1)$, and $V\coloneqq H(\bl_2)$. 
By Proposition~\ref{prop:lefschetz_relative_homology}, we have
isomorphisms $X\stackrel{\psi}{\cong}X'$, $W\stackrel{\phi_1}{\cong}W'$, and 
$V\stackrel{\phi_2}{\cong}V'$.
By defining 
    $i_\ast\coloneqq\psi\circ i_\ast'\circ \phi_1^{-1}$ and 
    $j_\ast\coloneqq\phi_2\circ j_\ast'\circ \psi^{-1}$ 
    (see diagram \eqref{eq:lefschetz_ar_split_joint}), 
    we get similar splits $W=W_1\oplus W_2$ with 
    $W_2\coloneqq\ker i_\ast$, $X=X_1\oplus X_2$ with $X_1\coloneqq\im i_\ast$,
    and $V=V_1\oplus V_2$ with $V_2\coloneqq\im j_\ast$. 
Here, also the summands $W_1$, $X_2$, and $V_1$ are not uniquely determined. 
We fix them, by choosing a basis. Independent of this choice,
proof of~\cite[Theorem~5.12]{CMbarcodes2025} provides:
\begin{proposition}\label{prop:structure-of-i-j-ast}
    Restrictions $i_\ast|_{W_1}:W_1\rightarrow X_1$ and $j_\ast|_{X_2}:X_2\rightarrow V_2$ are isomorphisms.  
    Moreover, $i_\ast|_{W_2}=0$ and $j_\ast|_{X_1}=0$.
\end{proposition}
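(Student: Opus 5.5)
The plan is to reduce the statement to linear algebra plus one exactness fact: exactness of the long exact sequence of a triple, applied to the index pairs. Everything is done on the primed side, with $(D_1,E_1)=(N_1,N_0)$, $(D,E)=(N_2,N_0)$ and $(D_2,E_2)=(N_2,N_1)$ built from the nested closed sets $N_0\subset N_1\subset N_2$. The result is then transported to the block side through the isomorphisms $\psi,\phi_1,\phi_2$ of Proposition~\ref{prop:lefschetz_relative_homology}.

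First, the statements about $i_\ast$ are immediate from the definitions. Since $W_2=\ker i_\ast$, we have $i_\ast|_{W_2}=0$. Because $W=W_1\oplus W_2$, the restriction $i_\ast|_{W_1}$ is injective. Its image is $i_\ast(W)=\im i_\ast=X_1$, so $i_\ast|_{W_1}\colon W_1\to X_1$ is an isomorphism.

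Second, and this is the substantive step, I would show $\ker j_\ast=\im i_\ast$. The triple $(N_2,N_1,N_0)$ consists of closed subsets of a Lefschetz complex, so the short exact sequence of relative chain complexes
\[
0\to \Cgroup(N_1,N_0)\to \Cgroup(N_2,N_0)\to \Cgroup(N_2,N_1)\to 0
\]
yields the long exact sequence of the triple. Its maps $H(N_1,N_0)\to H(N_2,N_0)\to H(N_2,N_1)$ are exactly $i'_\ast$ and $j'_\ast$, so exactness at the middle term gives $\ker j'_\ast=\im i'_\ast$. Transporting this, we have $\ker j_\ast=\psi(\ker j'_\ast)$ because $\phi_2$ is an isomorphism, and $\im i_\ast=\psi(\im i'_\ast)$ because $\phi_1$ is an isomorphism. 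Hence $\ker j_\ast=\im i_\ast=X_1$.

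Third, the claims about $j_\ast$ follow. Since $X_1=\ker j_\ast$, we get $j_\ast|_{X_1}=0$. As $X=X_1\oplus X_2$, the restriction $j_\ast|_{X_2}$ is injective and its image is $j_\ast(X)=V_2$, so $j_\ast|_{X_2}\colon X_2\to V_2$ is an isomorphism.

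None of this depends on the choice of complements $W_1$, $X_2$, $V_1$. The only nontrivial input is middle exactness, which I expect to be the main point to check. Specifically, one must verify that the index pairs of the split genuinely come from a single nested triple of closed sets, which holds for the construction recalled from \cite[Section~5.4]{CMbarcodes2025}. One must also verify that the relative chain sequence above is exact in Lefschetz homology; it is, since each term is spanned by a partition of cells.
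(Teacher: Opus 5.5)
Your proof is correct and follows essentially the same route as the paper. The paper defers to the proof of \cite[Theorem~5.12]{CMbarcodes2025}, which uses middle exactness $\im i'_\ast=\ker j'_\ast$ in the long exact sequence of the triple $(N_2,N_1,N_0)$, transports it to the blocks through $\psi,\phi_1,\phi_2$ (with $\psi\colon X'\to X$ as in diagram~\eqref{eq:lefschetz_ar_split_joint}, matching your $\ker j_\ast=\psi(\ker j'_\ast)$), and then observes that any complement of a kernel maps isomorphically onto the image.
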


\begin{equation}\label{eq:lefschetz_ar_split_joint}
    \begin{tikzcd}[row sep=1.5em, column sep=1.5em]
        & & & H(B_2)\ar[r,equal]& V_1\oplus V_2\\
        & & & H(D_2, E_2)\arrow[u, "\varphi_2"]\ar[r,equal]& V_1'\oplus V_2'\\
        X_1\oplus X_2\ar[r, equal] & H(B) \arrow[rruu, bend left, "j_\ast"]\arrow[r,leftarrow, "\psi"]
            & H(D,E)\arrow[ru, "j_\ast'"] & \ar[l, equal]X_1'\oplus X_2' \\
        & & & H(D_1, E_1)\arrow[d, "\varphi_{1}"]\arrow[lu, "i_\ast'"]\ar[r,equal]& W_1'\oplus W_2'
            \\ 
        & & & H(B_1)\arrow[lluu, bend left, "i_\ast"] \ar[r,equal]& W_1\oplus W_2
    \end{tikzcd}
\end{equation}

{\bf Choosing bases and spaces}: 
For the spaces $W_2$, $X_1$, and $V_2$, we fix bases $S_{W_2}$, $S_{X_1}$, and $S_{V_2}$, respectively; extend them to bases of the total spaces 
$W$, $X$, and $V$, which define the spaces $W_1$, $X_2$, and $V_1$ with
complement subbases
$S_{W_1}=S_W\setminus S_{W_2}$, $S_{X_2}=S_{X}\setminus S_{X_1}$, and $S_{V_1}=S_{V}\setminus S_{V_2}$,
respectively.
Let $S_{W_2}$ be a basis for $\ker i_\ast=W_2$ and extend it to $S_{W}$
to have $S_{W_1}$.
Using $i_\ast|_{W_1}$ as an isomorphism (Proposition~\ref{prop:structure-of-i-j-ast}), define
    the basis of $X_1$ as $S_{X_1}=\{i_\ast(w)\,|\, w\in S_{W_1}\}$.
    Extend it to $S_{X}$ to have $S_{X_2}$.
Using $j_\ast|_{X_2}$ as an isomorphism (Proposition~\ref{prop:structure-of-i-j-ast}), define the basis $S_{V_2}=\{j_\ast(x)\,|\, x\in S_{X_2}\}$
and extend it to $S_{V}$ to have $S_{V_1}$.

{\bf Defining structural maps}: 
Now, we are ready to define structural maps corresponding to $\idxmap{1}$ and $\idxmap{2}$ in diagram~\eqref{eq:ar-split-inclusions}.
For the inclusion $\idxmap{1}:\bl_1\hookrightarrow \bl$, 
we have an induced homomorphism $i_*:H(\bl_1)\rightarrow H(\bl)$ and we write $g\coloneqq i_*$ to indicate that $\bl_1$ is closed in $\bl$.

The inclusion $\idxmap{2}:\bl_2\hookrightarrow \bl$ does not induce a homomorphism when $\bl_2$ is not closed in $\bl$.
In this case, we indirectly define $f: V\rightarrow X$ as follows: $\forall v\in S_{V_1}$, 
    $f(v)=0$ and
    $\forall v\in S_{V_2}$, $f(v)=x$ where $x\in S_{X_2}$ and $j_\ast(x)=v$. 
Essentially, $f$ reverses the map $j_\ast$ via the isomorphism $j_\ast|_{X_2}$.

Thus, we have three types of maps $h_{p,q}:H(B_{p,t})\rightarrow H(B_{q,t\pm 1})$:
    when there is no split and $B_{p,t}=B_{q,t\pm 1}$, then $h_{p,q}$ is a straightforward isomorphism;
    when $B_{p,t}$ is the `closed' part of a split of $B_{q,t\pm 1}$ ($\bl_1\hookrightarrow\bl$ type of inclusion),
        then $h_{p,q}=g$, which is directly given by the inclusion;
    and when $B_{p,t}$ is the open part of a split of $B_{q,t\pm 1}$ 
        ($\bl_2\hookrightarrow\bl$ type),
        then $h_{p,q}=f$.
The following diagram summarizes the discussion:
\begin{equation}\label{eq:lefschetz_ar_split_regular}
    \begin{tikzcd}[row sep=tiny, column sep=2.5cm]
        & H(\bl_2)=V_1\oplus V_2\arrow[ld, "f=0\oplus (j_\ast|_{X_2})^{-1}",sloped]\\
        X_1\oplus X_2 = H(\bl) & \\
        & H(\bl_1)=W_1\oplus W_2\arrow[lu, "g=(i_\ast|_{W_1})\oplus 0",swap, sloped]
    \end{tikzcd}
\end{equation}

\paragraph{Connecting the splits.} 
It may happen that two consecutive refinements split a block differently.
In this case, as in~\cite{CMbarcodes2025}, we connect the two splits of the
same block by an isomorphism $\gamma$, which essentially changes the basis as needed. Algorithmically, $\gamma$ is implicitly implemented with an elementary operation called
\textbf{Transposition}. For simplicity, we omit showing these isomorphisms in the Figures~\ref{fig:index-pairs-diagram} and~\ref{fig:conley-morse-persistence-module-and-barcode}.
See Section~\ref{sec:appendix_connecting_the_splits} for an example of a non-trivial $\gamma$.

\begin{tikzcd}[row sep=tiny, column sep=2.0cm]
    \hat{V_1}\oplus \hat{V_2}
        \arrow[rd, "\hat{f}=0\oplus (j_\ast|_{\hat{X}_2})^{-1}", sloped]
        & & &
        V_1\oplus V_2
            \arrow[ld, "f=0\oplus (j_\ast|_{X_2})^{-1}", sloped] \\
    &
    \hat{X}_1\oplus \hat{X}_2 = H(\bl) 
        \arrow[r, "\gamma"]
    &
    H(\bl) = X_1\oplus X_2
    \\
    \hat{W}_1\oplus \hat{W}_2
        \arrow[ru, "\hat{g}=(i_\ast|_{\hat{W}_1})\oplus 0", swap, sloped]
        & & &
        W_1\oplus W_2
            \arrow[lu, "g=(i_\ast|_{W_1})\oplus 0", swap, sloped]
\end{tikzcd}

\paragraph{Decomposition.}
We define persistence module $\mathcal{G}: \mathbb{P}\rightarrow \VecSp$ indexed
by the poset $\mathbb{P}$ where $\mathcal G((p,t))=H(B_{p,t})$ and the structural maps are given by $\mathcal G((p,t)\leq_{\mathbb{P}} (q,t\pm 1))=h_{p,q}$ 
The reader can easily verify that the structure of arrows in $\PP$ guarantees that there are no commutativity issues. 
    In particular, there are no two alternative paths between any two nodes (see Figure~\ref{fig:index-pairs-diagram}, left).
Even though the
two modules $\overline{\mathcal G}$ and
$\mathcal G$ are defined on two posets $\overline{\mathbb{P}}$ and $\mathbb{P}$ respectively, with possibly different arrow directions, they decompose into path modules with the same support. Proposition~\ref{prop:CMmodule-simplified}
below states this fact which follows from Proposition~\ref{prop:composition}.

\begin{proposition}
    Let $M:\mathbb X\rightarrow \VecSp$ be a persistence module on a finite
    poset $\mathbb X$ and let $\{\mathbb{I}_\alpha\}_{\alpha\in \Lambda}$ be a set of submodules of $M$ with
    $\bigoplus_{\alpha\in\Lambda}\mathbb{I}_\alpha(x)=M(x)$  $\forall x\in \mathbb{X}$.
    Then, $M=\bigoplus_{\alpha\in\Lambda} {\mathbb I}_\alpha$.
    \label{prop:composition}
\end{proposition}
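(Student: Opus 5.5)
We reduce the claim to the universal property of the direct sum in the category of persistence modules, checked objectwise. Since each $\mathbb{I}_\alpha$ is a submodule of $M$, we have inclusion morphisms $\iota_\alpha:\mathbb{I}_\alpha\hookrightarrow M$. Concretely, for every $x\in\mathbb{X}$, $\mathbb{I}_\alpha(x)\subseteq M(x)$, and for every $x\leq y$ the structural map satisfies $M(x\leq y)(\mathbb{I}_\alpha(x))\subseteq \mathbb{I}_\alpha(y)$, with $\mathbb{I}_\alpha(x\leq y)$ equal to the restriction of $M(x\leq y)$. The external direct sum $N\coloneqq\bigoplus_{\alpha}\mathbb{I}_\alpha$ is the module with $N(x)=\bigoplus_\alpha \mathbb{I}_\alpha(x)$ (external) and $N(x\leq y)=\bigoplus_\alpha \mathbb{I}_\alpha(x\leq y)$. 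The plan is to exhibit an isomorphism of persistence modules $\Phi:N\to M$.

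First, define $\Phi_x:N(x)\to M(x)$ by $(v_\alpha)_\alpha\mapsto\sum_\alpha v_\alpha$, i.e.\ the map induced by the inclusions $\iota_\alpha(x)$. The hypothesis that $\bigoplus_\alpha\mathbb{I}_\alpha(x)=M(x)$ as an internal direct sum means exactly that $\Phi_x$ is bijective: the sum spans, giving surjectivity, and its independence gives injectivity. Finiteness of $\Lambda$ is not even needed, since direct sums only involve finitely supported tuples.

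Second, we check naturality. For $x\leq y$ and $(v_\alpha)\in N(x)$, linearity of $M(x\leq y)$ together with the submodule property gives
$M(x\leq y)\Phi_x((v_\alpha))=\sum_\alpha M(x\leq y)(v_\alpha)=\sum_\alpha \mathbb{I}_\alpha(x\leq y)(v_\alpha)=\Phi_y(N(x\leq y)(v_\alpha)_\alpha)$.
Thus $\Phi$ is a natural transformation. A natural transformation that is an isomorphism at every object is an isomorphism of functors, since the objectwise inverses are automatically natural. Hence $M\cong N$, and the submodules give an internal decomposition $M=\bigoplus_\alpha\mathbb{I}_\alpha$.

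There is essentially no obstacle. The only point requiring care is the meaning of ``submodule'': it must include invariance of each $\mathbb{I}_\alpha$ under the structural maps of $M$. Without this invariance, objectwise decomposition would not yield a module decomposition. We state this explicitly as part of the definition being used, because it is the condition that must be verified when the result is later applied to $\mathcal{G}$, namely that the chosen bases yield path submodules.
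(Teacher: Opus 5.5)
Your proof is correct and is essentially the paper's argument viewed from the opposite direction. The paper assembles the objectwise projections $j_{\alpha,x}$ and uses submodule invariance to check that they are natural, giving splittings $j_\alpha\circ\iota_\alpha=\mathrm{id}$; you instead check naturality of the sum-of-inclusions map $\Phi$ and get the projections for free as components of the objectwise inverse $\Phi^{-1}$, which is automatically natural. Your version has the advantage of producing the full isomorphism $M\cong\bigoplus_\alpha\mathbb{I}_\alpha$ in one step, whereas the paper shows each $\mathbb{I}_\alpha$ is a summand and leaves implicit that the projections jointly invert the sum map.
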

\begin{proof}
We need to show that $\mathbb{I}_\alpha$ is a direct summand of $M$, or equivalently, 
    the inclusion $i_\alpha\colon\mathbb{I}_\alpha\hookrightarrow M$ 
    admits a module homomorphism (a splitting) $j_\alpha: M\rightarrow \mathbb I_\alpha$ with $j_\alpha\circ i_\alpha=\mathrm{id}_{\mathbb I_\alpha}$. 
Since for each $x\in \mathbb{X}$, there exists $\Lambda'\coloneqq\{x\mid \mathbb{I}_\alpha(x)\neq 0\}\subseteq \Lambda$ 
    such that $M(x)=\bigoplus_{\alpha\in \Lambda'} \mathbb{I}_\alpha(x)$,
there are unique linear projections $j_{\alpha,x}\colon M(x)\rightarrow \mathbb{I_\alpha}(x)$,
which pick out the 
$\alpha$-summand in the direct-sum decomposition at $x$.
Define $j_\alpha$ to be the collection of these maps $(j_{\alpha,x})_{x}$ over all
$x\in \mathbb{X}$.

We need to check these $j_{\alpha,x}$ assemble to a morphism of persistence modules, i.e.,
they commute with the structure maps denoted $\phi_{x\leq y}\colon M(x)\rightarrow M(y)$.
Take any $x\leq y$ and any $v\in M(x)$. 
Write the decomposition at $x$ as
$v=\sum_{\beta\in \Lambda'} v_\beta$ where $v_\beta\in \mathbb{I}_\beta(x)$.
Since $\mathbb{I}_\beta$ is a submodule, we get $\phi_{x\leq y} (v_\beta)\in \mathbb I_\beta(y)$. Therefore, $\phi_{x\leq y}(v)=\sum_\beta \phi_{x\leq y}(v_\beta)$
gives the decomposition of $\phi_{x\leq y}(v)$ into the summands at point $y$.
Now apply the projections:
\[
j_{\alpha,y}(\phi_{x\leq y}(v))=j_{\alpha,y}\big(\phi_{x\leq y}\big(\sum_{\beta\in \Lambda'} v_\beta\big)\big)
= \phi_{x\leq y}(v_\alpha)=\phi_{x\leq y}(j_{\alpha,x}(v))
\]
Thus for every $x\leq y$, we have
\[
j_{\alpha,y}\circ \phi_{x\leq y}= \phi_{x\leq y}\circ j_{\alpha,x}.
\]
So, the family $(j_{\alpha,x})_{x}$ is a natural transformation, that is, a persistence module homomorphism $j_\alpha\colon M\rightarrow \mathbb I_\alpha$ is well defined.

By construction, for each $x$, the restriction of $j_{\alpha,x}$ to $\mathbb I_\alpha(x)$
is the identity on $\mathbb I_\alpha(x)$ (because projections in a direct-sum decomposition do that). Therefore, $j_\alpha\circ i_\alpha=\mathrm{id}_{\mathbb I_\alpha}$ as we are required to prove. \qed
\end{proof}

\begin{proposition}
    Let $\overline\Pi$ be the set of paths in $\overline{\mathbb{P}}$ so that 
    $\overline{\mathcal G}=\bigoplus_{\pi\in \overline{\Pi}} \bar{\mathbb{I}}_{\pi}$.
    Then, ${\mathcal G}=\bigoplus_{\pi\in \Pi} \mathbb{I}_{\pi}$
    for a set of paths $\Pi$ in $\mathbb{P}$ where $\Pi=\overline{\Pi}$.
    \label{prop:CMmodule-simplified}
\end{proposition}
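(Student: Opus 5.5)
We will transport the path decomposition of $\overline{\mathcal G}$ to $\mathcal G$ one grade at a time, using Proposition~\ref{prop:composition}. By hypothesis each vector space $\overline{\mathcal G}(x)=H(D_{p,t},E_{p,t})$ splits as $\bigoplus_{\pi\ni x}\bar{\mathbb I}_\pi(x)$, with one-dimensional summands. For every $x=(p,t)$ we transfer this splitting to $\mathcal G(x)=H(B_{p,t})$ through the isomorphism of Proposition~\ref{prop:lefschetz_relative_homology} ($\psi$, $\phi_1$, $\phi_2$ in diagram~\eqref{eq:lefschetz_ar_split_joint}). This gives candidate lines $\mathbb I_\pi(x)\subseteq \mathcal G(x)$ with $\bigoplus_{\pi\ni x}\mathbb I_\pi(x)=\mathcal G(x)$, and we set $\mathbb I_\pi(x)=0$ for $x\notin\pi$. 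Once we show that each $\mathbb I_\pi$ is a submodule of $\mathcal G$ whose structure maps along $\pi$ are isomorphisms, Proposition~\ref{prop:composition} yields $\mathcal G=\bigoplus_\pi \mathbb I_\pi$ with the same index set, so $\Pi=\overline\Pi$.

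We will choose the bases in the construction of $h_{p,q}$ to be compatible with the decomposition, and we check the maps arrow by arrow. For an identity arrow $B_{p,t}=B_{q,t\pm1}$ there is nothing to check. For a split $B\to\{B_1,B_2\}$, the hypothesis that $\bar{\mathbb I}_\pi$ are submodules with iso/zero maps means the following. Each line of $W'$ is either in $\ker i'_\ast$ (the path ends there) or maps isomorphically onto a line of $X'$. Each line of $X'$ either lies in $\im i'_\ast$ or maps isomorphically under $j'_\ast$ onto a line of $V'$. Each line of $V'$ either lies in $\im j'_\ast$ or starts a path. So we take $S_{W_2}$ to consist of generators of the path-lines of $W$ killed by $i_\ast$. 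We take $S_{X_1}$ to consist of their images, and $S_{X_2}$ to consist of generators of the remaining lines of $X$; these are exactly the lines not in $\im i_\ast$, and they span a complement of $X_1$. Finally $S_{V_2}=j_\ast(S_{X_2})$, and $S_{V_1}$ consists of the remaining path-lines of $V$.

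Proposition~\ref{prop:structure-of-i-j-ast} then makes the required properties hold. The map $g$ sends each line of $W_1$ isomorphically to the line of the same path and kills each line of $W_2$. For $f=0\oplus(j_\ast|_{X_2})^{-1}$, the lines in $V_2$ go isomorphically back to the same path's line in $X_2$, and the lines in $V_1$ go to $0$. A path through $X_1$ has no line in $V$, since $j_\ast|_{X_1}=0$, so the reversed arrow is consistent with supports. Consequently each $\mathbb I_\pi$ is closed under every $h_{p,q}$, the maps inside $\pi$ are isomorphisms, and the maps leaving $\pi$ are zero. The transposition isomorphisms $\gamma$ connecting different splits of the same block only change bases, and we choose them to carry the lines of one split decomposition to the corresponding lines of the other.

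The main obstacle is the compatibility of choices. The decomposition of $\overline{\mathcal G}$ fixes the lines globally, while the maps $f$ depend on the chosen complement $X_2$ and on $\gamma$. We must therefore make sure that, at each block, the single decomposition of $H(B)$ simultaneously serves all incident splits; this is precisely what the $\gamma$ arrows allow. We also need a correct treatment of the case in which a path in $\overline{\mathbb P}$ traverses a reversed arrow. Here the key fact is that $j'_\ast$ restricted to the line is an isomorphism, so its inverse is also an isomorphism and the support of the path is unchanged. Finally, since $\mathbb P$ has no alternative paths between nodes, no commutativity constraints arise, and the arrow-by-arrow check suffices.
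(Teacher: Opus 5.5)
Your proposal is correct and follows the paper's proof. Like the paper, you transport the lines of $\overline{\mathcal G}$ through the Lefschetz isomorphisms $\psi,\phi_1,\phi_2$, check that direct arrows ($g=i_\ast$) and reversed arrows ($f=0\oplus(j_\ast|_{X_2})^{-1}$) send each transported line to the line of the same path or to zero, and conclude with Proposition~\ref{prop:composition}; your explicit choice of the complements $W_1$, $X_2$, $V_1$ (and of $\gamma$) adapted to the given decomposition is a useful sharpening, since the paper's case split ``$\bar v_p\in X_1'$ or $\bar v_p\in X_2'$'' tacitly assumes exactly this compatibility.
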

\begin{proof}
    First, we recognize that both $\mathbb{P}$ and $\mathbb{\overline{P}}$ have the same 
    set of objects (points) and a one-to-one correspondence between their relations in their
    Hasse diagrams (transition diagrams). The only difference is that certain relations in these
    Hasse diagrams have opposite directions, namely, an inclusion $(D_{p,t},E_{p,t})\hookrightarrow (D_{q,t\pm 1},E_{q,\pm 1})$ in index pairs may correspond to an inclusion $B_{p,t}\hookleftarrow B_{q,t\pm 1}$ in blocks.

    Consider any summand $\bar{\mathbb{I}}_\pi$ in the decomposition $\overline{\mathcal G}=\bigoplus_{\pi\in \overline{\Pi}} \bar{\mathbb{I}}_{\pi}$. Let $(p,t)\rightarrow (q,t\pm 1)$ be
    any edge in the Hasse diagram of $\overline{\mathbb{P}}$. 
    We have $\bar v_p\stackrel{\bar\kappa_\ast}{\rightarrow}\bar v_q$
    where $\bar v_p=\bar{\mathbb{I}}_\pi(p,t)$ and $\bar v_q=\bar{\mathbb{I}}_\pi(q,t\pm 1)$ are zero or one dimensional vector spaces with the structural map $\bar\kappa_\ast$ in $\overline{\mathcal G}$
        induced by inclusions in index pairs. 
    Now, we will show that that the summand $\bar{\mathbb{I}}_\pi$ is consistent with the module $\mathcal G$ defined on~$\mathbb{P}$. 
    Recall the definition of the structural maps $h_{p,q}$ for $\mathcal G$. We have two cases.

    \textbf{No reversal.} $(p,t)\rightarrow (q,t\pm 1)$ is
    a relation in $\mathbb P$: by definition 
        $h_{p,q}= i_\ast=\psi\circ i_\ast'\circ\phi_1^{-1}$
    where $\psi$ and $\phi_1$ are isomorphisms (see Diagram~\ref{eq:lefschetz_ar_split_joint}).
    Since $\bar{\kappa}_\ast=i_\ast'$ we have $h_{p,q}(\phi_1(\bar v_p))=\psi(i_\ast'(\bar v_p))=\psi(\bar v_q)$.
    Writing
    $\phi_1(\bar v_p)=v_p\in W$ and $\psi(\bar v_q)=v_q\in  X$, we get $h_{p,q}(v_p)=v_q$.

    \textbf{A reversal.} $(p,t)\leftarrow (q,t\pm 1)$ is a relation in $\mathbb P$: 
        If $\bar v_p\in X_1'$, we have $j'_\ast(\bar v_p)=0$ (by Proposition~\ref{prop:structure-of-i-j-ast}), 
            and also $v_p\not\in\im h_{q,p}=\im f$ by construction of $f$. 
    If $\bar v_p\in X_2'$, then by definition,
        $h_{q,p}(\phi_2(\bar v_q))=\psi(\bar v_p)$ because $j_\ast'(\bar v_p)=\bar v_q$ which agrees with $\bar{\kappa}_\ast$.
        Then, we have $h_{q,p}(v_q)=v_p$.
        

    It follows that for each summand $\bar{\mathbb{I}}_\pi$ in $\bar{\mathcal G}$, we have a submodule $\mathbb{I}_\pi$ of $\mathcal G$ with the same support. Furthermore, because of the isomorphisms $\psi,\phi_1,\phi_2$,
    $\bigoplus_{\pi\in \Pi} (v_p=\mathbb I_\pi(p,t))=\bigoplus_{\pi\in \Pi}(\bar v_p= \overline{\mathbb{I}}_\pi(p,t))=\overline{\mathcal G}(p,t)=\mathcal{G}(p,t)$.
    Therefore, by Proposition~\ref{prop:composition}, $\mathcal G=\bigoplus_{\pi\in \Pi} \mathbb I_\pi$. By the uniqueness of decomposition up to isomorphism,
    any other decomposition $\mathcal G=\bigoplus_{\pi\in \Pi'}\mathbb I_\pi$ must have
    $\Pi=\Pi'$. \qed
\end{proof}
~\\
Proposition~\ref{prop:CMmodule-simplified} allows us to work with the simplified
module $\mathcal G$ in place of $\overline{\mathcal G}$ for computing the Conley-Morse
persistence barcode. However, we need to fix the  summands $W_1$, $X_2$, and~$V_1$ which our algorithm
does by choosing bases for them. Using them,
we compute the set of path modules satisfying the condition
of Proposition~\ref{prop:composition}, which form a decomposition of $\mathcal G$.
For a path $\pi=(p_b,b),(p_{b+1},b+1),\ldots,(p_d,d)$ in~$\mathbb{P}$, consider
a sequence of classes $L=[u^b],[u^{b+1}],\ldots,[u^{d-1}],[u^d]$ where $u^b,\ldots,u^d$
are cycles in Lefschetz complexes $B_{p_b,b}\in {\mathcal B}_b,\ldots,B_{p_d,d}\in {\mathcal B}_d$ respectively.
For $t\in [b,d-1]$, we write $[u^t]\leftrightarrow [u^{t+1}]$ if $h_{p,q}([u^t])=[u^{t+1}]$ for some $p\in P_t$ and $q\in P_{t+1}$, or $h_{p,q}([u^{t+1}])=[u^t]$ for $p\in P_{t+1}$ and $q\in P_t$ (depending on the direction of the filtration). 

\begin{definition} We say $L$ is
a \emph{representative} for $\pi$ if for every $t\in [b,d-1]$, we have
(i) $[u^t]\leftrightarrow [u^{t+1}]$, and (ii) for $b\not = 0$, $H({\mathcal B}_{b-1})\ni 0\leftrightarrow [u^b]$, and for $d\not = \bar T$, $[u^d]\leftrightarrow 0\in H({\mathcal B}_{d+1})$. Also, we denote the corresponding path module
as $\mathbb{I}_\pi$, where $\mathbb I_\pi(p_t,t)=[u^{t}]$.
\end{definition}
The following proposition is a reformulation of Proposition~\ref{prop:composition} in terms of representatives which the algorithm works with.
\begin{proposition}
    Let $\{L_k=[u_k^{b_k}],[u_k^{(b+1)_k}],\ldots,[u_k^{(d-1)_k}],[u_k^{d_k}]\}_{k\in \Lambda}$ 
    be a set
    of representatives for
    respective paths $\Pi=\{\pi_k=(p_{b_k},b_k),\ldots,(p_{d_k},d_k)\}_{k\in \Lambda}$    
    where $\forall (p,t)\in \mathbb{P}$, there exists some index set $\Lambda'\subseteq \Lambda$ so that $H(B_{p,t})=\bigoplus_{k\in \Lambda'}[u_k^{t}]$. 
    Then, $\mathcal G=\bigoplus_{\pi_k\in \Pi} \mathbb{I}_{\pi_k}$.
    \label{prop:representative}
\end{proposition}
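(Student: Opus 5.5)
The plan is to reduce the statement to Proposition~\ref{prop:composition}. Take $M=\mathcal G$ on the finite poset $\mathbb P$, and take the family $\{\mathbb I_{\pi_k}\}_{k\in\Lambda}$ of path modules determined by the representatives $L_k$. For each $k$ we set $\mathbb I_{\pi_k}(p_t,t)$ to be the line $\field\,[u_k^t]\subseteq H(B_{p_t,t})$ for $(p_t,t)\in\pi_k$, and $0$ at every other point of $\mathbb P$. Two things then need checking: that each $\mathbb I_{\pi_k}$ is a submodule of $\mathcal G$ isomorphic to the path module on $\pi_k$, and that at every point the submodules give a direct-sum decomposition. The second is exactly the hypothesis $H(B_{p,t})=\bigoplus_{k\in\Lambda'}[u_k^t]$, where $\Lambda'$ consists of those $k$ whose path passes through $(p,t)$. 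Reading $[u_k^t]$ as the span of that class, the hypothesis gives $\bigoplus_k \mathbb I_{\pi_k}(x)=\mathcal G(x)$ for all $x$, and Proposition~\ref{prop:composition} then yields $\mathcal G=\bigoplus_k\mathbb I_{\pi_k}$.

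\textbf{Submodule property.} We must show that every structural map $h_{p,q}$ of $\mathcal G$ carries $\mathbb I_{\pi_k}(x)$ into $\mathbb I_{\pi_k}(y)$ for each relation $x\le_{\mathbb P}y$ of the Hasse diagram. Because $\mathbb P$ has no alternative paths between nodes, general relations are composites of Hasse edges, so checking the edges suffices. There are three cases.
\begin{enumerate}
\item Both $x$ and $y$ lie on $\pi_k$. Then they are consecutive, say grades $t$ and $t+1$, and condition (i) of the definition of representative gives $h([u_k^t])=[u_k^{t+1}]$, or the reverse depending on direction. So the line maps isomorphically onto the line, since the classes are nonzero because they are summands.
\item $x\notin\pi_k$. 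Then $\mathbb I_{\pi_k}(x)=0$, and there is nothing to check.
\item $x\in\pi_k$ but $y\notin\pi_k$. We need $h(\mathbb I_{\pi_k}(x))=0$.
\end{enumerate}

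\textbf{Main obstacle: the last case.} If $y$ lies at grade $t\pm1$ outside $[b_k,d_k]$, condition (ii) supplies exactly $[u_k^{b}]\mapsto 0$ or $[u_k^{d}]\mapsto 0$ in the appropriate direction. The delicate situation is when $y$ is at a grade within $[b_k,d_k]$ but is a different node, e.g.\ the sibling block in a split. Here I would read the relation $\leftrightarrow$ in condition (ii) as referring to all structural maps out of the endpoint node. For interior nodes I would use the shape of $\mathbb P$: an interior node $x=(p_t,t)$ of a path has its path-neighbours at $t-1$ and $t+1$, and any other Hasse edge out of $x$ goes to a node at grade $t\pm1$ already occupied by the path. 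Since $\mathbb P$ is graded and each grade transition is a single atomic split or merge, at most one neighbour of $x$ per adjacent grade receives an outgoing arrow from $x$. In a split $B\supset B_1,B_2$, the arrows go into $B$, so $x$ has at most one outgoing arrow per adjacent grade. Hence any outgoing edge from $x$ to grade $t\pm1$ lands on the path node itself, and the third case only arises at endpoints, where (ii) handles it.

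Finally, each $\mathbb I_{\pi_k}$ has one-dimensional values on $\pi_k$, isomorphisms along $\pi_k$, and zero maps elsewhere, so it is the path module on $\pi_k$. Applying Proposition~\ref{prop:composition} concludes the proof.
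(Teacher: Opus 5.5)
Your proposal is correct and takes the same route as the paper. The paper gives no separate proof; it calls this proposition a reformulation of Proposition~\ref{prop:composition}: the lines spanned by the $[u_k^t]$ are submodules by conditions (i)--(ii) of the definition of representative, and they sum directly at each node by hypothesis. Your extra check, that a path node can have an outgoing Hasse edge to an off-path node only at an endpoint (because all edges between adjacent grades point from the finer to the coarser partition, with at most one outgoing edge per node per adjacent grade), is exactly the detail the paper leaves implicit.
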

\cancel{
A direct characterization of map $f$ in terms of chains is not straightforward, however here we provide key properties.
By $\partial_B$ we mean the boundary operator restricted to $B$, and by $[c]_V$ the homology class of cycle $c$ in $V$.

\begin{proposition}\label{prop:reversing-the-split-map}
    Let $c\in\Zgroup(B_2)$ such that $c\neq 0$.
    The following statements hold:
    \begin{enumerate}[label=\arabic*)]
        \item for any $e\in\Cgroup(B_1)$ such that no subchain of $e$ is in $\Zgroup(B)$ and $d\coloneqq c+e\in\Zgroup(B)$ and $[d]_X\in X_2$, 
            we have $f([c]_{V})= [d]_X$,
        \item 
            $f([c]_{V})\not = 0$ $\Leftrightarrow$
            $[\partial_B c]_W=0$ $\Leftrightarrow$ 
            there exists $e\in\Cgroup(B_1)$ such that $d\coloneqq c+e\in\Zgroup(B)$.
    \end{enumerate}
\end{proposition}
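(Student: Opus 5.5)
The statement to prove is the last one before the cut, inside the \texttt{\textbackslash cancel} block: Proposition~\ref{prop:reversing-the-split-map}. The setting is the split $B=B_1\sqcup B_2$, with $B_1$ closed and $B_2$ open in $B$, and the homology split $X=X_1\oplus X_2$, where $X_1=\im i_\ast$ and $X_2$ is the fixed complement. By construction, $f$ is zero on $S_{V_1}$ and equals $(j_\ast|_{X_2})^{-1}$ on $V_2=\im j_\ast$.

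The key chain-level fact is this: identify $j_\ast\colon H(B)\to H(B_2)$ with the map induced by the projection $\Cgroup(B)\to\Cgroup(B_2)=\Cgroup(B)/\Cgroup(B_1)$. This is a chain map because $B_1$ is closed in $B$, so it is a subcomplex and $B_2$ is the quotient. The identification follows from Proposition~\ref{prop:lefschetz_relative_homology} together with the choice $(D,E)=(N_2,N_0)$ and $(D_2,E_2)=(N_2,N_1)$. Both $\psi$ and $\phi_2$ are induced by restricting chains of $\cl B$ modulo the mouth, so diagram~\eqref{eq:lefschetz_ar_split_joint} commutes at chain level with the truncation $d\mapsto d|_{B_2}$. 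I would verify this first, since everything else rests on it. Similarly, $i_\ast$ is induced by the inclusion $\Cgroup(B_1)\subset\Cgroup(B)$, and $\partial_B c$ for $c\in\Zgroup(B_2)$ lies in $\Cgroup(B_1)$ and is a cycle there. Thus $c\mapsto[\partial_B c]_W$ is the connecting homomorphism $\delta\colon V\to W$ of the short exact sequence $0\to\Cgroup(B_1)\to\Cgroup(B)\to\Cgroup(B_2)\to 0$.

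For part 2), I use the long exact sequence $W\xrightarrow{i_\ast}X\xrightarrow{j_\ast}V\xrightarrow{\delta}W$, whose exactness gives $\ker\delta=\im j_\ast=V_2$. The second equivalence is then standard. If $\partial_B c=\partial_B e'$ with $e'\in\Cgroup(B_1)$, set $e=-e'$, so that $c+e$ is a cycle. Conversely, if $c+e$ is a cycle, then $\partial_B c=-\partial_B e$ is a boundary in $B_1$. For the first equivalence, note that $f$ kills only $V_1$ and is injective on $V_2$. So $f([c]_V)\neq 0$ iff $[c]_V$ has a nonzero $V_2$-component. This matches ``$[c]\in V_2$'' only when $[c]_V\neq0$ and $[c]_V$ lies in $V_2$. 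I expect this to be the main obstacle: the statement as written seems to need either $[c]_V\in V_1\cup V_2$ or a caveat. I would first try to prove it under the interpretation that $f([c])\ne0$ exactly when $[c]\in V_2\setminus\{0\}$. I would note that $c\neq0$ alone does not give $[c]_V\neq0$, and that a class mixing $V_1$ and $V_2$ components breaks the forward implication. So I would state that hypothesis explicitly.

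For part 1), suppose $d=c+e\in\Zgroup(B)$ and $[d]_X\in X_2$. By the chain-level fact, $j_\ast[d]_X=[d|_{B_2}]_V=[c]_V$. Hence $[c]_V\in V_2$, and $f([c]_V)=(j_\ast|_{X_2})^{-1}([c]_V)=[d]_X$, since $[d]_X\in X_2$ and $j_\ast|_{X_2}$ is injective by Proposition~\ref{prop:structure-of-i-j-ast}. The extra hypothesis that no subchain of $e$ is a cycle is not needed for this equality. It only serves to normalize the choice of $e$ modulo $\Zgroup(B_1)$, which changes $[d]_X$ by an element of $X_1=\im i_\ast$. I would remark that this is why the condition $[d]_X\in X_2$ is what pins down the representative.
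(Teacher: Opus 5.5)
Your proposal is correct, and your reading of the statement is right. As written, the first equivalence in 2) is false. It becomes true exactly under the hypothesis $[c]_V\in(V_1\cup V_2)\setminus\{0\}$ that you propose to add.

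A concrete counterexample to the backward direction: let $B=\{a,b,ab\}$ be a closed edge. It splits into the closed part $B_1=\{b\}$ and the open part $B_2=\{a,ab\}$; this arises by refining the multivector $\{a,b,ab\}$ into $\{b\}$ and $\{a,ab\}$. Take $c=a$. Then $c\neq 0$ is a cycle in $B_2$, and it is a boundary there (of $ab$, since $b\notin B_2$), so $[c]_V=0$ and $f([c]_V)=0$. Yet $\partial_B c=0$, so $[\partial_B c]_W=0$ and $e=0$ works.

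Your mixed-class argument disposes of the forward direction whenever $V_1$ and $V_2$ are both nonzero in the same degree. One wording fix: the equivalence also holds, with both sides false, for $[c]_V\in V_1\setminus\{0\}$. This is because $\delta$ is injective on $V_1$, since $V_1\cap\ker\delta=V_1\cap V_2=0$. State this case explicitly instead of saying the conditions match ``only when $[c]_V$ lies in $V_2$''.

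\textbf{Comparison with the paper.} Your part 1) is the paper's argument, the one given for the live version of Proposition~\ref{prop:reversing-the-split-map}. For nested index pairs $(D,E)\subset(D_2,E_2)$ one has $B_1\subset D\setminus B_2\subset E_2$. Hence $j'_\ast$ sends the class of $d=c+e$ to the class of $c$, and one inverts $j_\ast|_{X_2}$. This works for any nested choice of index pairs, not only $(N_2,N_1,N_0)$.

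For 2), the paper's own chain-level argument for this formulation (the suppressed appendix lemma) goes through the complementary chain $e$. It has exactly the two gaps you identify:
\begin{itemize}
\item it extracts $e$ from $f([c]_V)\neq 0$, which needs $[c]_V\in V_2$;
\item it concludes $f([c]_V)\neq 0$ from the existence of $e$ by asserting that the class of $c$ in $H(D_2,E_2)$ is nonzero, which needs $[c]_V\neq 0$.
\end{itemize}

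Your route via the connecting homomorphism is cleaner. Writing $[\partial_B c]_W=\delta([c]_V)$ shows that the second and third conditions say $[c]_V\in\ker\delta=\im j_\ast=V_2$. The first condition says $[c]_V\notin V_1=\ker f$. So both the discrepancy and its repair are visible at once. The live version of the proposition repairs the statement the same way you do, by assuming $[u]\in S_V$: a nonzero basis element lying in $S_{V_1}$ or $S_{V_2}$.
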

}


\begin{figure}
    \centering
    \includegraphics[width=0.8\linewidth]{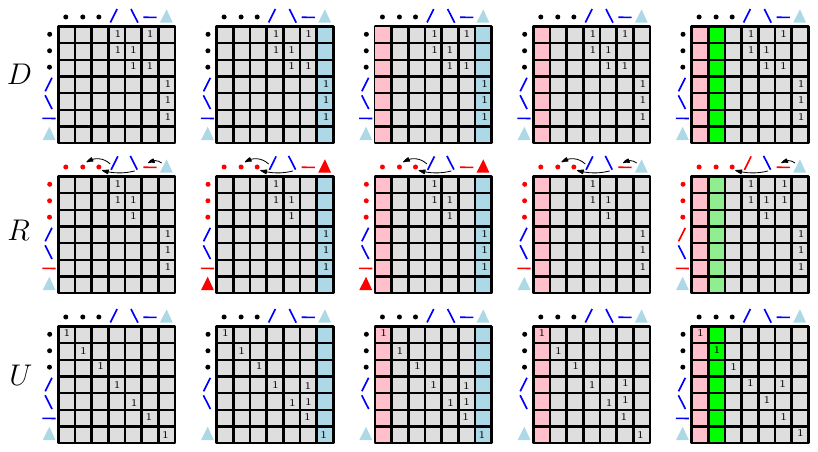}
    \caption{Matrix updates for filtration in Figure~\ref{fig:sequence-of-block-decompositions}.
    (upper row) Boundary matrix $D$ with blocks; (middle row) reduced matrix $R$, simplices for non-homogeneous columns are colored red; homogeneous-target pairs are indicated by arrows; (lower row) uniform matrix $U$.
    }
    \label{fig:MatDecompose}
\end{figure}

\section{Conley-Morse persistence barcode from matrix decompositions}\label{sec:matrix-decomposition}
Although our algorithm works for any fixed field $\Bbbk$, 
    for simplicity we describe it for $\Bbbk=\mathbb{Z}_2$.
Let $\zzBD\coloneqq\{\cB_t\}_{t\in[0,T]}$ be a zigzag filtration of block partitions.
To compute Conley index of a block $B_{p,t}$ we can simply perform the reduction algorithm on the boundary matrix restricted to that subset and read the homology.
However, in order to keep track of the changing Conley indices we keep them all in one matrix. 
Namely, for each $t$ we assume to have a square $n\times n$ \emph{block boundary matrix} $D_t$ of $X$ with $n$ cells ordered according to a linearization of admissible order $\leq$ for $\BD_t$, 
    that is, a cell $\tau\in B_{p,t}$ comes before a cell $\sigma\in B_{q,t}$ whenever $p<q$, or $p=q$ and $\tau$ is a face of $\sigma$ (see Figures~\ref{fig:MatDecompose}, \ref{fig:LS}, \ref{fig:RS}).
    Let $c_1,\ldots,c_n$ denote the left-to-right ordered set of columns and
    $r_1,\ldots, r_n$ denote the top-to-bottom ordered set of rows in $D_t$.
    Each column $c_i$ and row $r_i$ corresponds to a cell $\sigma_i\in X$ for which we sometimes write $c_i=c_{\sigma_i}$ and $r_i=r_{\sigma_i}$. 
    A block $B\in \BD_t$ is represented in $D_t$ by the columns 
    $\{c_i\}_{\sigma_i\in B}$ and \emph{all} rows $\{r_i\}_{\sigma_i\in X}$ though 
    we say $c_i$ and $r_i$ \emph{belong} to the unique block that contains $\sigma_i$. 
    The column $c_i$ initially represents the chain $\partial \sigma_i$.
Then, block-independent reductions of~$D_t$ such as the ones in Algorithm~\ref{alg:block-reduction} may change the corresponding chain.
These reductions use 
left-to-right column additions mimicking the well known persistence algorithm.
To understand the process, 
assume that $R_t$ is a matrix with columns and rows indexed and partitioned
with blocks, as in $D_t$.

The \emph{pivot} $\low(c_i)$ of a column $c_i$ in matrix $R_t$ is the row corresponding to the lowest non-zero entry in $c_i$ if $c_i$ is non-empty and is undefined otherwise.
We say that the column $c_i$ and the row $r_i$, 
    which correspond to the same cell $\sigma_i$, 
    are \emph{homogeneous} if $\low(c_i)$ is defined and both $c_i$ and $\low(c_i)$ belong to the same block $B$. 
Two homogeneous columns $c_i$ and $c_j$ in a block $B$ are in \emph{conflict} if $\low(c_i)=\low(c_j)$.
We call $c_i$ \emph{reduced}
if either $c_i$ is non-homogeneous or $c_i$ does not conflict with any other
column in $B$. Notice that, pivots of reduced homogeneous
columns over all blocks are globally unique; 
but, the same may not be true for non-homogeneous columns. 
The matrix
$R_t$ is said to be \emph{block reduced} if all its columns are reduced.

\begin{algorithm}
    \caption{{\sc{BlockIndependentReduction}}}\label{alg:block-reduction}
    \KwData{An $n\times n$ matrix $D$---block boundary matrix for $\BD$} 
    \KwResult{$R=DU$ block reduction of $D$}
    $R\coloneqq D$\;
    $U\coloneqq I_n$\tcp*{an $n\times n$ identity matrix}\
    \For(){$j:=1$ to $n$}{ 
             \If{$c_j=R[\cdot,j]$ is homogeneous}{
                \While {$\exists$ \text{$s<j$ \& $\low(c_s)=\low(c_j)$}}
                   {$R[\cdot,j]=R[\cdot,s] +R[\cdot,j]$;\\
                   $U[\cdot,j]=U[\cdot,s] +U[\cdot,j]$;
                }
            }
    }
\Return{$R$ and $U$}
\end{algorithm}
Algorithm \ref{alg:block-reduction} reduces each block in $\cB$ independently, similarly to the standard reduction algorithm. 
Within each block only homogeneous columns are added.
It is easy to verify that matrix $R$ obtained as an output is block reduced, and matrix $U$ is uniform.
As discussed in Proposition~\ref{prop:pointwiseBasis}, by identifying all non-homogeneous and non-targeted columns in each block $B\in\cB$ in $R$ we obtain the Conley indices.

From an input block boundary matrix $D_t$, we can obtain a block reduced matrix
$R_t$ by left-to-right column additions.
If a homogeneous column $c_i$ conflicts with a column $c_j$ to its left,
we update $c_i$ by the left-to-right addition $c_i\leftarrow c_i+c_j$. We continue
these additions until $c_i$ becomes reduced. At any generic step of this process,
if $R_t$ denotes the matrix obtained so far, we can write $R_t=D_tU_t$ where $U_t$ is an upper triangular square matrix. 
Each column $c_i$ and each row $r_i$ in $D_t$
    correspond to a column in $U_t$ that we denote with $u_i$ and a row in $U_t$ which we still denote with $r_i$.
If we do not allow any other type of column addition to make
$R_t$ block reduced, $U_t$ becomes \emph{uniform}, as defined below. 

 \begin{definition}
		Given a block boundary matrix $D_t$ for $\md_{t}$, let $R_t$ be a matrix where $R_t=D_tU_t$ for an upper triangular matrix $U_t$. We say a column $u_\ell$ in a block $B\in \md_t$ of the matrix $U_t$ is \emph{uniform} if $\forall i\not=\ell$,
        $U_t[i,\ell]\not=0$ $\implies$ $r_i$ belongs to block $B$ and $r_i$ is homogeneous.
        This means that all reductions in $R_t$ occur with the left-to-right additions between homogeneous columns in the same block only. We call $U_t$ \emph{uniform} if all of its columns are uniform.
 \end{definition}
\begin{proposition}
        A matrix $R_t=D_tU_t$ is obtained with left-to-right column additions between
        homogeneous columns in the same block if and only if $U_t$ is upper triangular and uniform.
        \label{prop:umatrix}
\end{proposition}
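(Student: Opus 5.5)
We prove the two implications separately, tracking how $U_t$ changes under the permitted column operations. Throughout we read the statement as follows: ``homogeneous'' for a row $r_i$ refers to homogeneity of the corresponding column $c_i$ at the moment it is used as a source in an addition. Since homogeneity depends only on the pivot and block membership, this property is preserved once $c_i$ is reduced; the main obstacle below is exactly this invariance.

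\textbf{($\Rightarrow$).} We induct on the number of column additions. Initially $U_t=I_n$, which is upper triangular and trivially uniform. Suppose $R=D_tU$ with $U$ upper triangular and uniform, and we perform $c_\ell\leftarrow c_\ell+c_s$ with $s<\ell$, where $c_s,c_\ell$ are homogeneous columns of the same block $B$. Correspondingly $u_\ell\leftarrow u_\ell+u_s$. Every column of $U$ other than $u_\ell$ is unchanged.

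\begin{itemize}
\item \emph{Upper triangularity.} Both $u_s$ and $u_\ell$ are supported in rows $\le\ell$ (since $s<\ell$), so the new $u_\ell$ is too.
\item \emph{Uniformity of the new $u_\ell$.} Take an off-diagonal nonzero entry in row $i\neq\ell$.
  \begin{itemize}
  \item If it came from the old $u_\ell$, uniformity of $u_\ell$ gives that $r_i$ belongs to $B$ and is homogeneous.
  \item If it came from $u_s$ with $i\neq s$, uniformity of $u_s$ gives that $r_i$ belongs to the block of $c_s$, which is $B$, and is homogeneous.
  \item If $i=s$, then $r_s$ belongs to $B$, and $c_s$ is homogeneous by hypothesis.
  \end{itemize}
\end{itemize}

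The delicate point is that ``$r_i$ homogeneous'' must remain true after later additions. We handle it by noting that once $c_i$ is used as a source, it is already reduced and is never modified again, because additions only target the column currently being processed, in left-to-right order. Hence its pivot, and therefore its homogeneity, stays fixed.

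\textbf{($\Leftarrow$).} Let $U_t$ be upper triangular and uniform. Since $R_t=D_tU_t$ must be nondegenerate for the reduction to make sense, the diagonal entries are nonzero, i.e.\ equal to $1$ over $\mathbb{Z}_2$. Then each column satisfies
\[
u_\ell=e_\ell+\sum_{i\in S_\ell}e_i,
\]
where every $i\in S_\ell$ has $i<\ell$, $r_i\in B$ (the block of $c_\ell$), and $r_i$ homogeneous. Consequently
\[
c_\ell^{R}=c_\ell^{D}+\sum_{i\in S_\ell}c_i^{D}.
\]

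We build this sum by column additions processed left to right, by induction on $\ell$. When treating $c_\ell$, all columns $c_i$ with $i<\ell$ already equal $D_tu_i$. We add $c_i^R=D_tu_i$ for suitable $i<\ell$, chosen by a triangular back-substitution: in decreasing order of $i$, add $c_i^R$ whenever the coefficient of $e_i$ in the current $u_\ell$ differs from the target. Each such $u_i$ has support only in rows $\le i$ inside $B$ with homogeneous rows, so these corrections stay within that set and the procedure terminates with exactly $u_\ell$. Every addition used is left-to-right, within block $B$, and has a homogeneous source $c_i$.

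The one remaining point is that the target $c_\ell$ itself must be homogeneous whenever $S_\ell\neq\emptyset$, i.e.\ that uniformity forces nontrivial columns to be homogeneous ones. We intend to take this as part of the definition of the reduction setting: such additions are only ever applied to homogeneous columns, as in Algorithm~\ref{alg:block-reduction}. This is the step where we expect the most care is needed in matching the precise conventions of the definition.
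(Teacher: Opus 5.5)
The paper states Proposition~\ref{prop:umatrix} without proof. It treats it as a rephrasing of the definition of a uniform column, which the paper itself glosses as ``all reductions in $R_t$ occur with the left-to-right additions between homogeneous columns in the same block only''. So there is no route to compare against. Your induction for ($\Rightarrow$) and your triangular back-substitution for ($\Leftarrow$) are the natural way to make the claim precise, and the computations in both are correct. What needs fixing is that three conventions are presented as consequences when they are really extra hypotheses. In one case, the thing you propose to assume is false.

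\textbf{Order of additions and unit diagonal.} That a source column ``is never modified again'' does not follow from the additions being left-to-right between homogeneous columns of one block. It is a feature of the column-by-column order of Algorithm~\ref{alg:block-reduction}, and ($\Rightarrow$) fails without it. Take a block $B$ with a vertex $v$ and edges $a_1=vw_1$, $a_2=vw_2$, $a_3=vw_3$ in this order, with the $w_i$ in earlier blocks. Then all three edge columns are homogeneous with pivot $r_v$. Add $c_{a_2}$ to $c_{a_3}$, then $c_{a_1}$ to $c_{a_2}$. Both additions are admissible when made, and the result is even block reduced. But now $c_{a_2}=w_1+w_2$ is non-homogeneous while $u_{a_3}$ has a nonzero entry in row $r_{a_2}$, so $U$ is not uniform. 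State the order as a hypothesis. Likewise, in ($\Leftarrow$) a unit diagonal must be assumed rather than inferred from ``nondegeneracy'': $U_t=0$ is upper triangular and vacuously uniform.

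\textbf{Homogeneity of the target.} The point you flag at the end cannot be proved. Making it ``part of the definition'' does not help, because in ($\Leftarrow$) you are given only $U_t$ and must produce the additions yourself. Uniformity constrains only the rows $r_i$ with $i\neq\ell$, i.e.\ the columns being added, never the column $c_\ell$ receiving them.
\begin{itemize}
\item If $c_\ell$ is non-homogeneous in $D_t$ and $c_i$, $i<\ell$, is a homogeneous column of the same block, then the identity with one extra $1$ in position $(i,\ell)$ is upper triangular and uniform. Yet it records an addition onto a non-homogeneous column.
\item Even from a homogeneous start, your back-substitution can keep adding after the running column has left $B$.
\end{itemize}

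There are two correct fixes.
\begin{itemize}
\item Read ``between homogeneous columns'' as a condition on the added columns only. This is exactly what uniformity encodes, and it is how the paper actually uses the converse (cf.\ ``reduced only by homogeneous columns from left'' in the Shuffle discussion). With this reading your proof is already complete.
\item For the stronger reading, assume in addition that $R_t$ is block reduced, which is the situation in which the paper applies the proposition. Write $u_\ell=e_\ell+\sum_{i\in T}u_i$. By uniformity, as your back-substitution shows, the $c_i$ with $i\in T$ are homogeneous columns of $B$. By block reducedness they have pairwise distinct pivots in rows of $B$. A non-homogeneous column of $B$, by contrast, is zero or has its pivot in an earlier block, above all rows of $B$. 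So if the running column were non-homogeneous before some addition, the final $c_\ell$ would inherit the largest pivot among the columns still to be added and conflict with that column. Hence, in any order, every addition lands on a homogeneous column.
\end{itemize}
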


The Conley index of a block $B$ in a block reduced matrix $R_t$ is implicitly
given by the classes of chains represented by certain columns of $U_t$. 
In this
respect, we introduce targeted columns, which play an important role in the algorithm, too. A column $c_j$ in $R_t$ is called \emph{targeted} if $r_j=\low(c)$ for
some homogeneous column $c$ in $R_t$. In what follows, we take the liberty to denote
the chain given by a column $c$ and $u$ in $R_t$ and $U_t$, respectively, as $c$ and $u$ themselves. 
With this understanding, we have $\partial u_i=c_i$ for every $i\in [1,n]$.

\begin{proposition}
Let $R_t=D_tU_t$ be block reduced and $U_t$ be uniform. Let $\Lambda$ be the index set
where $\{c_\ell\}_{\ell\in \Lambda}$ be the set of all
non-homogeneous and non-targeted columns in a block $B\in \md_t$ in $R_t$.
Then, the set $\{[u_\ell]\}_{\ell\in \Lambda}$ constitutes a basis of the Conley index $H(B)$.
\label{prop:pointwiseBasis}
\end{proposition}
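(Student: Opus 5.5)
The approach is to reduce the statement to the standard persistence-pairing argument, applied to the Lefschetz subcomplex $B$ on its own. Restrict attention to the submatrix of $D_t$ with columns and rows indexed by cells of $B$; call it $D_B$. Because $B$ is locally closed, $D_B$ is exactly the boundary matrix of the Lefschetz complex $B$, with the cells of $B$ in the linear order of $D_t$, which restricts to a face-compatible order on $B$. Since $U_t$ is uniform, every nonzero off-diagonal entry of a column $u_\ell$ with $\sigma_\ell\in B$ lies in a row belonging to $B$. Hence $u_\ell$ is a chain in $\Cgroup(B)$. Let $U_B$ be the principal submatrix of $U_t$ on $B$; it is upper triangular with unit diagonal.

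The first key step is to relate $\partial_B u_\ell$ to $c_\ell$. Here $\partial_B u_\ell$ is the restriction of $\partial u_\ell = c_\ell$ to the rows of $B$, since the boundary in $B$ simply drops the cells outside $B$. So $R_B := D_B U_B$ is the submatrix of $R_t$ on $B$-rows and $B$-columns.

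Next I check that $R_B$ is reduced in the usual sense, i.e. has distinct pivots.
\begin{itemize}
\item A homogeneous column $c_\ell$ has its pivot in a $B$-row. Since $\low$ is the lowest nonzero entry overall, it is also the pivot of the restricted column. Reducedness of $R_t$ then gives distinct pivots among these columns.
\item A non-homogeneous column has its pivot outside $B$. The admissible order should force that pivot to lie above $B$'s rows, because faces of cells in $B$ lie in blocks $B_p$ with $p$ at most the index of $B$ (there is an edge $\sigma\to\tau$ in $\GV$ for any face $\tau$). So every nonzero entry of such a column lies in a block of index $\le$ that of $B$, and the lowest one lying outside $B$ means all entries are above $B$'s rows. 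Hence the restricted column is zero.
\end{itemize}
This ordering argument is the step I expect to need the most care. The whole proof rests on it: non-homogeneous in $R_t$ must mean zero in $R_B$.

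With that established, $R_B = D_B U_B$ is a standard reduced decomposition of the boundary matrix of $B$. The standard persistence theorem then says that classes of $u_\ell$ over columns with zero $R_B$-column and unpaired row form a basis of $H(B)$. Zero $R_B$-columns are exactly the non-homogeneous columns of $B$, and paired rows are exactly the targeted ones (targeted by homogeneous columns, which necessarily lie in $B$). So the set in the statement matches.

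For completeness, I would sketch that standard fact directly rather than cite it:
\begin{itemize}
\item The chains $u_\ell$ form a basis of $\Cgroup(B)$, by unit upper-triangularity of $U_B$.
\item The cycles of $B$ are spanned by the $u_\ell$ with zero reduced column, by distinct pivots.
\item The boundaries of $B$ are spanned by the nonzero $R_B$-columns $c_k$.
\item Each such $c_k$ equals $u_{\low(c_k)}$ plus earlier basis chains. A triangular change of basis then shows that the boundaries together with the non-targeted cycles $u_\ell$ span $\Zgroup(B)$ independently.
\end{itemize}
This last step also requires that the targeted columns are themselves cycle columns: the pivot row of a homogeneous column corresponds to a cell whose $R$-column is zero in $B$, by the usual argument on $\partial\partial=0$. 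I would verify this using $\partial_B(c_k)=0$ together with the distinct pivots.
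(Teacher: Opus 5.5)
Your proposal is correct and is essentially the paper's argument. Restricting to the $B\times B$ submatrix and working in $H(B)$ is the same chain-level identification $\Cgroup(\cl B,\mo B)\cong\Cgroup(B)$ that the paper uses when it passes to relative homology, and your triangular change of basis, replacing $u_i$ with $i=\low(c_k)$ by the restriction of $c_k$ to the rows of $B$, is exactly the paper's replacement of $u_\ell$ by the $B$-part $x$ of the targeting column $c_{\ell'}$. You are in fact more explicit than the paper on two points it leaves implicit (cf.\ Proposition~\ref{prop:pivot-hom}): why a non-homogeneous column vanishes on the rows of $B$ (via the admissible order), and why targeted columns are themselves cycle columns.
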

\begin{proof}
     First, recall that $H(B)= H(\cl B,\mo B)$ by Proposition~\ref{prop:lefschetzhom}. The block $B$ in block reduced matrix $R_t$ has
     every homogeneous column with a unique pivot row. The columns of $B$ in $U_t$ represent a basis of the chain space of $B$.
     By construction,
     for every column $c_\ell$ in $R_t$ and its corresponding column
     $u_\ell$ in $U_t$ we have $\partial u_\ell=c_\ell$. Since $U_t$ is
     uniform, the chain $u_\ell$ is in the chain space of $B$.
     The chain $u_\ell$ is a relative cycle if and only if $\partial u_\ell=c_\ell$ is
     in $\mo B$ which happens if and only if
     the column $c_\ell$ in reduced $R_t$ is non-homogeneous. Let $\Lambda'=\{\ell\,|\,c_\ell \mbox{ is non-homogeneous in } B\}$.
     
     Further, the relative cycles $\{u_\ell\}_{\ell\in \Lambda'}$ 
     are independent
     because each $u_\ell$ contains a unique simplex at the diagonal of $U_t$.
     Thus, the relative cycles $\{u_\ell\}_{\ell\in \Lambda'}$ constitute a basis of the relative cycle space $\Zgroup(\cl B,\mo B)$. 
     Among these cycles, the targeted columns correspond to boundaries. 
     To see this, let $c_\ell$ be a targeted column of a homogeneous column $c_{\ell'}$ in $B$. The
     chain $\partial u_{\ell'}=c_{\ell'}=x+y$ is a cycle where $x$ and $y$ are the maximal subchains
     present and not present in $B$ respectively. 
     Then, $y$ is in $\mo B$ and since mouth of a locally closed set is closed, $\cl y\subset \mo B$. 
     We have $\partial x=\partial y \subset \cl y \subset \mo B$ which means $\partial x\subset \mo B$.
     We can replace the chain $u_\ell$ with $x$ and replace $c_\ell$ with $\partial x$. 
     Then, $R_t$ remains block reduced and $c_\ell$ remains non-homogeneous and
     targeted.
     However, the relative cycle $x=u_\ell$ is a relative boundary of
     $u_{\ell'}$. It follows that the classes of relative cycles 
     $\{[u_\ell]\,| \,c_\ell \mbox{ is non-homogeneous and non-targeted}\}$
     constitute a basis for $H(\cl B,\mo B)$. In other words, if $\Lambda\subseteq \Lambda'$
         is the set of indices $\{\ell |\, c_\ell \mbox{ is non-homogeneous and non-targeted}\}$,
     then $\{[u_\ell]\}_{\ell\in \Lambda}$ constitutes a basis of $H(B)$.\qed       
     \end{proof}
~\\
Our algorithm for computing Conley-Morse persistence barcode builds on the following approach:
	 We obtain a block
    reduced matrix $R_0$ and uniform matrix $U_0$ from $D_0$ by block-independent reductions say by Algorithm
\ref{alg:block-reduction}.
	Then, instead of computing $R_{t+1}$ and $U_{t+1}$ directly with the algorithm from $D_{t+1}$, we obtain them by updating $R_t$ and $U_t$, respectively.
This allows us to track representatives through the filtration and eventually provides the Conley-Morse persistence barcode according to Proposition~\ref{prop:representative}. 
Any update from $D_t$ to $D_{t+1}$ can be expressed as a sequence of the following four elementary operations.

\begin{enumerate}[label=(\arabic*)]
    \item \textbf{Left/Right Split}; the order of columns remains unchanged, but a block is split on the 
    left/right boundary by a column. 
    If $c_1,c_2,\ldots, c_m$ is the order of the columns for a block, then 
       a left split produces two blocks, 
        one with the single column $c_1$
        and the other with the columns $c_2,\ldots, c_m$. 
    Similarly, 
    a right split produces two blocks, one consisting of columns $c_1,c_2,\ldots, c_{m-1}$ and the other with the single column $c_m$. 
    
    \item \textbf{Left/Right Merge}; this operation is the opposite of the split operation. 
        The order of columns remains unchanged, but a block is merged with a block consisting
        of a single column $c$ immediately to its left for left merge and
        immediately to its right for a right merge.
    \item \textbf{Transposition}; two columns are exchanged within a single block.
\end{enumerate}

A filtration $\zzBD$ of block partitions is called \emph{elementary} if the
block boundary matrix $D_{t+1}$ for the block partition $\md_{t+1}$ can be obtained
from the block boundary matrix $D_t$ for the block partition $\md_t$ by an
elementary update.
In particular, our running example (Figure~\ref{fig:sequence-of-block-decompositions}) is elementary. Theorem~\ref{thm:elementary-expansion} in Section~\ref{sec:elementary-expansion} justifies the generality of elementary operations.
    
\subsection{Tracking representatives with matrix decompositions}
Our algorithm computes the Conley-Morse persistence barcode by tracking a representative for each
bar. This is done by associating each bar with a non-homogeneous and non-targeted column $c_\ell^t$ of the matrix $R_t$ and determining if the class $[u_\ell^t]$ of the column $u_\ell^t$ in $U_t$ still maps to a non-zero class at $t+1$.
   
    Let $R_t=D_tU_t$ and $c_\ell^t=c_{\sigma_\ell}$ be a column of a $(p-1)$-chain in $R_t$ corresponding to the simplex $\sigma_\ell$ and $u_\ell^t$ be the corresponding column of a $p$-chain in $U_t$. We have $\partial u_\ell^t=c_\ell^t$.

Combining Proposition~\ref{prop:representative} and Proposition~\ref{prop:pointwiseBasis}, we get
the following result.

\begin{theorem}
Let $\{L_k=[u_k^{b_k}],[u_k^{(b+1)_k}],\ldots,[u_k^{(d-1)_k}],[u_k^{d_k}]\}_{k\in\Lambda}$ 
    be a set
    of representatives for
    respective paths $\Pi=\{\pi_k=(p_{b_k},b_k),\ldots,(p_{d_k},d_k)\}_{k\in \Lambda}$    
    where $\forall t\in [0,T]$, the classes $\{[u_k^{t}]\}_{k\in \Lambda'}$
    for an index set $\Lambda'\subseteq \Lambda$ be such that the set $\{c_k^t\}_{k\in \Lambda'}$ is the set of all
    non-homogeneous and non-targeted columns in $R_t$. Then $\mathcal G=\bigoplus_{\pi_k\in \Pi} \mathbb{I}_{\pi_k}$.
    \label{thm:barcode}
\end{theorem}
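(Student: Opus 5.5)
The plan is to reduce Theorem~\ref{thm:barcode} to Proposition~\ref{prop:representative}. The representative condition is already part of the hypothesis, so the only thing left to supply is the pointwise direct-sum condition. That is, for every $(p,t)\in\mathbb{P}$ we must produce an index set $\Lambda'_{p,t}\subseteq\Lambda$ with $H(B_{p,t})=\bigoplus_{k\in\Lambda'_{p,t}}[u_k^t]$. The hypothesis gives a global statement at each grade $t$: the columns $\{c_k^t\}_{k\in\Lambda'}$ are exactly the non-homogeneous, non-targeted columns of $R_t$. The first step is to localize this to individual blocks.

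Fix $t$ and a block $B=B_{p,t}\in\cB_t$. Define $\Lambda'_{p,t}\coloneqq\{k\in\Lambda'\mid \sigma_k\in B\}$, where $\sigma_k$ is the cell indexing column $c_k^t$. Since every column belongs to exactly one block, $\{c_k^t\}_{k\in\Lambda'_{p,t}}$ is precisely the set of non-homogeneous, non-targeted columns of $R_t$ lying in $B$. The statement of Theorem~\ref{thm:barcode} implicitly assumes, as the algorithm guarantees, that $R_t=D_tU_t$ is block reduced and $U_t$ is uniform. Under that assumption, Proposition~\ref{prop:pointwiseBasis} says $\{[u_k^t]\}_{k\in\Lambda'_{p,t}}$ is a basis of $H(B)$. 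A basis of one-dimensional spans is exactly an internal direct-sum decomposition, so $H(B_{p,t})=\bigoplus_{k\in\Lambda'_{p,t}}[u_k^t]$, where $[u_k^t]$ denotes the span of that class.

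Next we check consistency with the paths. For each $k\in\Lambda'_{p,t}$, the class $[u_k^t]$ must be the value at $(p,t)$ of the path module $\mathbb{I}_{\pi_k}$, which requires $(p,t)\in\pi_k$. This holds because $u_k^t$ is a chain in $B$ by uniformity of $U_t$, so the representative $L_k$ passes through block $B_{p,t}$ at grade $t$. Conversely, every $k$ whose path $\pi_k$ visits $(p,t)$ has a column $c_k^t$ among the non-homogeneous, non-targeted columns of $B$, so it lies in $\Lambda'_{p,t}$ and no summand is omitted. With the pointwise decomposition established for every $(p,t)$, Proposition~\ref{prop:representative} gives $\mathcal G=\bigoplus_{\pi_k\in\Pi}\mathbb{I}_{\pi_k}$.

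The main obstacle is this bookkeeping identification. The representative $L_k$ is tied to paths in $\mathbb{P}$, while the hypothesis is phrased in terms of columns of the global matrix $R_t$. We have to make sure the column $c_k^t$ attached to bar $k$ at grade $t$ lies in the same block as the path vertex $(p_{t},t)$ of $\pi_k$, and that no bar is alive at $t$ without a column. I would handle this by reading the hypothesis as asserting a bijection between the bars alive at $t$ and the set of non-homogeneous, non-targeted columns. Uniformity of $U_t$ then places each $u_k^t$ inside a single block, which pins down the vertex of $\pi_k$ at grade $t$.
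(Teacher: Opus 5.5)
Your proposal is correct and takes the same route as the paper, whose proof simply combines Proposition~\ref{prop:pointwiseBasis}, applied blockwise to get a basis of each $H(B_{p,t})$ from the non-homogeneous, non-targeted columns, with Proposition~\ref{prop:representative}. Your extra bookkeeping only makes explicit what the paper leaves implicit: uniformity of $U_t$ places each $u_k^t$ in the block visited by $\pi_k$, and the hypothesis is read as a bijection between bars alive at $t$ and those columns.
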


Theorem~\ref{thm:barcode} is the key to our algorithm. It says that if we track the non-homogeneous and non-targeted columns of block reduced matrices $R_t$s', we can track the Conley-Morse persistence barcode of a given filtration $\zzBD$. Algorithmically, under each operation we need to 
keep the matrix $R_t$ block reduced, $U_t$ uniform, and keep track of
non-homogeneous, non-targeted columns. 
The tracking means that we account
for (i) birth, that is, if a new non-homogeneous, non-targeted column appears,
(ii) continuation, that is, to which a non-homogeneous and
non-targeted column in $R_t$ continues in $R_{t+1}$,
and (iii) death, that is, if a non-homogeneous and non-targeted column becomes
homogeneous or targeted in $R_{t+1}$.


We show how we do this tracking under different elementary operations without recomputing the matrices $R_t$ and $U_t$ from scratch and updating them efficiently. 
The matrix $U_t$ helps keep the matrix $R_t$ block reduced with left-to-right additions (Proposition~\ref{prop:umatrix}). The time complexity of each operation varies ($O(n^2)$ in the worst case)
which we mention while describing them.
The tracking of representatives is
drawn from the result below which justifies why the classes implicitly selected
by the algorithm for a representative follow its definition. 
Recall the definitions
    of spaces 
    $X$, $V$, and $W$ and their splits
for a split of a block $\bl=\bl_1\sqcup \bl_2$ where $\bl_1$
is necessarily closed in $\bl$. 
Also, denote $f=h_{p,q}$ when its domain is $V$ and $g=h_{p,q}$ when its domain is $W$. We drop indices $(p,q)$ when there is no confusion.
\begin{proposition}\label{prop:reversing-the-split-map}
    The following statements hold:
    \begin{enumerate}[label=\arabic*)]
        \item\label{it:reversing-the-split-map-B2}
            Let $u\in \Zgroup(B_2)$ such that $[u]\in S_{V}$.
            If $\exists e\in\Cgroup(B_1)$ such that $d\coloneqq u+e\in\Zgroup(B)$,
            $[d]\in S_{X_2}$, 
            then $f([u])=[d]$, otherwise $f([u])=0$.
        \item\label{it:reversing-the-split-map-B1} 
            Let $u\in \Zgroup(B_1)$ and $u\not=0$. If $[u]\in S_{W_1}$,
            then $g([u])$ is a basis element in $S_{X_1}$ and if $[u]\in S_{W_2}$ then
            $g([u])=0$.
    \end{enumerate}
\end{proposition}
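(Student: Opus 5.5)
Both parts follow by unwinding the definitions of $f$ and $g$ from Section~\ref{sec:construction-of-structural-map-h}, together with a chain-level description of $j_\ast$. Part~\ref{it:reversing-the-split-map-B1} is the easier one, so I treat it first.

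For part~\ref{it:reversing-the-split-map-B1}, recall that $g=i_\ast$. The bases were built so that $S_{W_2}$ is a basis of $\ker i_\ast$ and $S_{X_1}=\{i_\ast(w)\mid w\in S_{W_1}\}$. If $[u]\in S_{W_2}$, then $g([u])=0$ immediately. If $[u]\in S_{W_1}$, then $g([u])=i_\ast([u])$, which lies in $S_{X_1}$ by construction. It is nonzero because, by Proposition~\ref{prop:structure-of-i-j-ast}, $i_\ast|_{W_1}$ is injective. I would also check that $i_\ast$ agrees at chain level with inclusion: $B_1$ is closed in $B$, so a cycle in $B_1$ stays a cycle in $B$ via $\psi,\phi_1$ and Proposition~\ref{prop:lefschetzhom}. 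Strictly speaking this check is not needed, since $g$ is defined as $i_\ast$.

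For part~\ref{it:reversing-the-split-map-B2}, the plan is to describe $j_\ast\colon X\to V$ on chains.

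\textbf{Step 1: $j_\ast$ is restriction.} Use the index pairs $(N_2,N_0)$ for $B$ and $(N_2,N_1)$ for $B_2$, with $j'$ the identity on $N_2$ inducing $C(N_2,N_0)\to C(N_2,N_1)$. Under the isomorphisms of Proposition~\ref{prop:lefschetzhom}, which identify relative chains with chains supported on $B$ or $B_2$, this becomes the projection $C(B)=C(B_1)\oplus C(B_2)\to C(B_2)$ that discards the $B_1$-part. So for a cycle $d=u+e$ in $B$ with $e\in C(B_1)$, we get $j_\ast([d])=[u]$.

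\textbf{Step 2: the case where $e$ exists.} By definition $f$ vanishes on $S_{V_1}$, and on $S_{V_2}$ it sends $v$ to the unique $x\in S_{X_2}$ with $j_\ast(x)=v$. Suppose $e$ exists with $d=u+e\in\Zgroup(B)$ and $[d]\in S_{X_2}$. Then $j_\ast([d])=[u]$, so $[u]\in S_{V_2}$, since $S_{V_2}=j_\ast(S_{X_2})$. Hence $f([u])=[d]$.

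\textbf{Step 3: the case where no such $e$ exists.} I claim $[u]\notin S_{V_2}$, which gives $f([u])=0$ because $[u]\in S_V$. Suppose instead that $[u]=j_\ast(x)$ for some $x\in S_{X_2}$. Pick a cycle $d'$ in $B$ representing $x$ and write $d'=u'+e'$ with $u'\in C(B_2)$ and $e'\in C(B_1)$. By Step 1, $[u']=[u]$ in $H(B_2)$, so $u=u'+\partial_{B_2}c$ for some chain $c\in C(B_2)$.

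Now set $d\coloneqq d'+\partial_B c$. This is still a cycle in $B$ representing $x$. Since $\partial_B c=\partial_{B_2}c+(\text{part in }B_1)$, its $B_2$-component is exactly $u$. So $d=u+e$ with $e\in C(B_1)$ and $[d]=x\in S_{X_2}$, contradicting the assumption that no such $e$ exists.

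The main obstacle is this last step. It needs that $\partial_B c$ restricted to $B_2$ equals $\partial_{B_2}c$, and that adding a boundary of $B$ keeps the homology class. Both rely on $B_2$ being open in $B$, so $B_1$ is closed and $C(B_1)$ is a subcomplex. I would verify Step 1 carefully using the explicit triple $(N_2,N_1,N_0)$, because the whole argument rests on identifying $j_\ast$ with restriction to $B_2$.
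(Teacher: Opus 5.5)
Your proof is correct and follows the same route as the paper. In both, $j_\ast$ is identified at chain level (through the index pairs) with discarding the $B_1$-part, so $j_\ast([u+e])=[u]$ gives $f([u])=[d]$, and part 2 is read off from the definitions. Your Step 3 adjusts a representative of a preimage $x\in S_{X_2}$ by $\partial_B c$ so that its $B_2$-component is exactly $u$; this supplies the justification the paper only asserts when it says that, without such a $d$, the class $[u]$ has no preimage under $j_\ast$.
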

\begin{proof}
    To see \ref{it:reversing-the-split-map-B2} 
    let $(D,E)$, $(D_2,E_2)$ be index pairs so that $B=D\setminus E$ and $B_2=D_2\setminus E_2$. Since $[d]\in S_{X_2}$, we have $d\in\Cgroup(B)=\Cgroup(D\setminus E)$. Then, $[d]\in S_{X_2'}$ as well.
    By inclusion of $(D,E)\hookrightarrow (D_2,E_2)$, we have $d\in \Cgroup(D_2)$.
    Since $e\in \Cgroup(E_2)$ we have $[d]_{V_2'}=[u]_{V_2'}$ where
    $j_\ast'([d])=[u]$ and therefore $f([u])=[d]$ by definition of $f$. If there
    is no $d$ with the stated conditions, we do not have a preimage
    of $[u]$ by $j_\ast$. Then, by definition, we have $f([u])=0$.
    


    \ref{it:reversing-the-split-map-B1} 
    Follows directly from the definitions.\qed

\end{proof}

\section{Split and Merge}\label{sec:split-and-merge}
In a split/merge, a single column is moved out of/into a block.
Let $B:=B_1\sqcup B_2$ be the block being split or merged upon where $B_1$ is necessarily closed in $B$.
Then, according to discussion in Section~\ref{sec:construction-of-structural-map-h}, 
the map $f: H(B_2)\rightarrow H(B)$ is not induced
by inclusion.

In a split/merge, the orders of the columns and rows are maintained, $U_t$ remains upper triangular but may not remain uniform because homogeneity of the columns in $R_t$ may get disturbed. 
Then, appropriate column additions are made in $R_t$ to get it reduced and corresponding additions are made in $U_t$ to restore its uniformity. 
In a split, exactly one of (i) or (ii) holds for a column $c$ which is in the blocks involved with the split and is associated to a bar in the Conley-Morse persistence barcode:
    (i) a new bar associated to $c$ is born, 
    (ii) an existing bar associated to $c$ continues possibly with a change in representative. 
For a merge, similarly exactly one of (i) or (ii) holds:
    (i) the existing bar associated to $c$ dies, 
    (ii) the existing bar associated to $c$ continues possibly with a change of representative.

The birth and death happen in pairs, that is, a pair of bars gets born if birth happens in a split and a pair of bars die if death happens in a merge. All
bars associated to columns in blocks not involved in split/merge continue with the same
representative. 

\begin{figure}[t]
    \centering
    \includegraphics[width=0.23\linewidth]{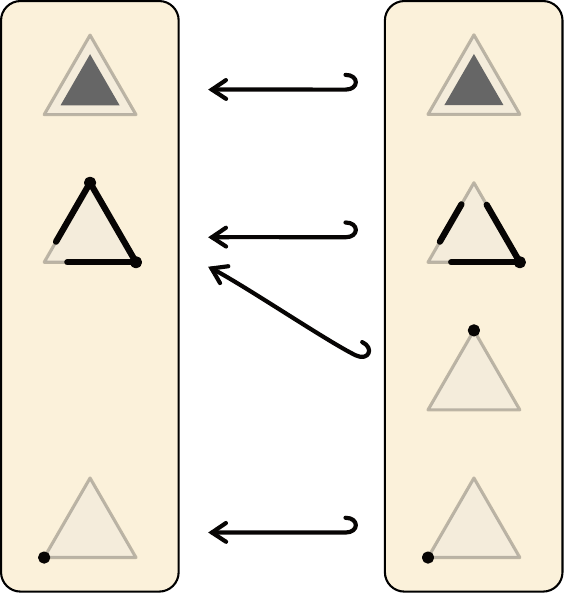}
    \hspace{0.6cm}
    \includegraphics[width=0.6\linewidth]{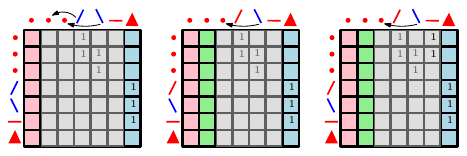}
    \caption{(left) Reduced matrix $R$ with three blocks; (middle) leftmost vertex of the gray block is split to create a new block (green) (LS.birth case), it also causes an edge (newly marked red) in the gray block to become non-homogeneous; (right) the column of the new red edge
    is added to the rightmost red edge column to `undo' the prior addition. A left merge
    will be opposite.
    }
    \label{fig:LS}
\end{figure}

\subsection{Left Split (LS)}
Let $c_j$ be the column split on left from the ordered set of columns $c_j,c_{j+1},\ldots,c_m$ for a block. Observe that $c_j$ must be non-homogeneous in $R_t$ and remains non-homogeneous
in $R_{t+1}$ as a block consisting of a single
simplex, say $\sigma$. Also, 
no other column is added to $c_j$ in both $R_t$ and $R_{t+1}$ keeping the corresponding column $u_j$ uniform in both $U_t$ and $U_{t+1}$.

\textbf{Case(LS.continue):} If $c_j$ is non-targeted in $R_t$, it remains so in $R_{t+1}$, 
    and thus the bar associated with $c_j$, along with all other live bars, continues
    from index $t$ to index $t+1$. 
Also, there are no changes in the representatives of the bars. This case takes
$O(1)$ time.

Writing $u_j^t:=u_j$ in $U_t$ and $u_j^{t+1}:=u_j$
in $U_{t+1}$, we see that chains $u_j^t$ and $u^{t+1}_j$ are equal to the simplex $\sigma$ where
$c_j=c_\sigma$. We have $\{\sigma\}$ closed in $B$ because
it has no face in $B$. 
Taking $B_1=\{\sigma\}$ we have $h([\sigma])=g([\sigma])=[\sigma]$ (see \eqref{eq:lefschetz_ar_split_regular}). 
For $B_2=B\setminus \{\sigma\}$, apply Proposition~\ref{prop:reversing-the-split-map}.\ref{it:reversing-the-split-map-B2} with
$e=0$ to see that $h([u_k])=f([u_k])=[u_k]$ for any non-homogeneous
and non-targeted column $u_k$ in $B_2$.

\textbf{Case(LS.birth):} If $c_j$ was targeted by a homogeneous column $c_k$ in $R_t$, the column $c_j$ becomes non-targeted and $c_k$ becomes non-homogeneous in $R_{t+1}$. Then, to restore
uniformity of $U_{t+1}$, we add $c_k$ ($u_k$ resp.) to each column $c_\ell$ ($u_\ell$ resp.) to which it had been added in $R_t$ ($U_t$ resp.) ($u_\ell$ in $U_t$ contains a non-zero entry in the row $r_k$). 
This step takes time $O(n^2)$. Notice that each column $c_\ell$
is non-homogeneous in $R_t$ because its pivot row index is lower
than the pivot row index of $c_k$ which is $j$ (the lowest row index belonging to $B$). 
After adding $c_k$, the pivot row of each such non-homogeneous column $c_\ell$ becomes the same as that of $c_k$. 
Since $c_k$ is non-homogeneous in $R_{t+1}$, column $c_\ell$ becomes non-homogeneous in $R_{t+1}$. 
To restore the property that $R_{t+1}=D_{t+1}U_{t+1}$, we add the column $u_k$ to each column $u_\ell$, which also restores the uniformity of $U_{t+1}$. 
This case causes a change in the Conley-Morse persistence barcode. The non-homogeneous column $c_j$ cannot be targeted in $R_{t+1}$. Also, no homogeneous column
in $R_t$ could have targeted the homogeneous column $c_k$ (see Proposition~\ref{prop:pivot-hom}) which becomes non-homogeneous in $R_{t+1}$ remaining non-targeted.
Therefore, we introduce two new bars at the index $t+1$ associated to the columns $c_j$ (in degree, say $p-1$) and $c_k$ (in degree $p$) with the representatives
$[u_j]$ and $[u_k]$, respectively. 

The chain $u_j=\sigma$ is in $B_1$ which was a boundary in $B$ because
$c_j$ was targeted before the split,
giving
$h([u_j])=g([u_j])=0$. The chain $u_k$ is in $B_2$ which was homogeneous in $B$ giving $h([u_k])=f([u_k])=0$. 
For every non-homogeneous and non-targeted column $c_\ell$ that changed
representative from $[u^t_\ell]$ to $[u^{t+1}_\ell]=[u^t_\ell+u_k]$,
observe that $[u^t_\ell]\in S_{V_2}$ and by Proposition \ref{prop:reversing-the-split-map}.1), $f([u^t_\ell])=[u^t_\ell]$ where $d=u^t_\ell$ and $e=0$. 
Then, $f([u^{t+1}_\ell])=f([u^t_\ell]+[u_k])=f([u^t_\ell])=[u^t_\ell]$.

\subsection{Right Split(RS)}
Let $c_m$ be the column split on right from the ordered columns $c_1,c_2,\ldots,c_m$ for block $B$. The column $c_m$
necessarily becomes non-homogeneous in $R_{t+1}$.
However, the column $u_m$ in $U_{t}$ may no longer be uniform in $U_{t+1}$.
To enforce it, we reset the column $u_m$ with a single non-zero entry at the diagonal and reset $c_m$ with $\partial \sigma$ where $c_m$ was $c_\sigma$ for a simplex $\sigma$. With this change, $U_{t+1}\leftarrow U_{t}$ becomes uniform
while keeping $R_{t+1}=D_{t+1}U_{t+1}$ block reduced. 

\textbf{Case(RS.continue):} $c_m$ is non-homogeneous in $R_t$. It is necessarily non-targeted.  The Conley-Morse persistence barcode does not change qualitatively,
    except that every live bar is extended to the index $t+1$
    and the representative of the bar associated to $c_m$ changes
from $[u_m^t]:=[u_m]$ in $R_t$ to $[u_m^{t+1}]:=[u_m]=[\sigma]$ in $R_{t+1}$.

The set $B\setminus \{\sigma\}$ is necessarily closed in $B$ because $c_m=c_\sigma$ being the rightmost column, $\sigma$ cannot be a face of any simplex in $B\setminus\{\sigma\}$.
We set $B_1=B\setminus\{\sigma\}$ and $B_2=\{\sigma\}$.
We can apply Proposition~\ref{prop:reversing-the-split-map}.\ref{it:reversing-the-split-map-B2} taking
$d=u^t_m$ and $e=u^t_m\setminus \sigma$ to claim that
$h([u^{t+1}_m])=f([\sigma])=[u^t_m]$. All other non-homogeneous non-targeted
columns $c_\ell$ in $R_t$ remain unchanged in $R_{t+1}$.
For such a column, we have the chain $u_\ell$ in $B_1$
giving $h([u_\ell])=g([u_\ell])=[u_\ell]$.
\begin{figure}
    \centering
    \includegraphics[width=0.27\linewidth]{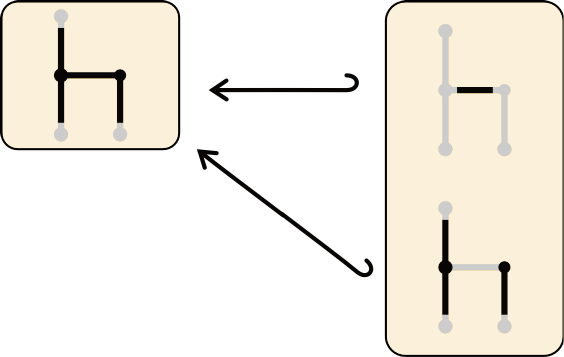}
    \hspace{0.5cm}
    \includegraphics[width=0.6\linewidth]{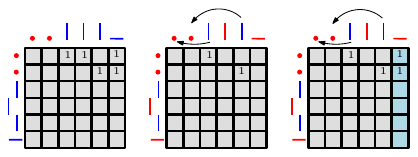}
    \caption{(left) Matrix $D$ with one block; (middle) reduced matrix $R$, homogeneous-target pairs are indicated with arrows; (right) a right split (RS.continue case), the non-homogeneous column after the split is reset to the boundary to undo prior addition; A right merge will be opposite.
    }
    \label{fig:RS}
\end{figure}

\cancel{
\tamal{this para would be useful later to show that $h$ is indeed
a reversal of the map originally considered in~\cite{CMbarcodes2025}.}
Notice that, as Lefstchez chains, $u_m^{i+1}=\sigma$ and $u_m^i=u'+\sigma$ both
in a block $B_{p,i}$. The inclusion between the relative pairs
$\iota: (u_m^i,0)\hookrightarrow (u_m^i,u')$ induces the homomorphism 
$\iota_*:H(u_m^i,0)\rightarrow H(u_m^i,u')$. We can assume that $\partial \sigma\not=0$ in $B_{p,i}$ because then $u_m^i=\sigma$ (no column is added to $c_m$ in $R_t$ to reduce it) and $u_m^i=u_m^{i+1}$ and there is nothing to prove. With the assumption
of $\partial \sigma\not=0$, we have $\partial u'\not =0$ in $B_{p,i}$. Therefore,
$H(u')=0$ allowing us to apply Lemma 24.4\cite{Munkres} by taking $K=L=u_m^i$, $K_0=0$, and $L_0=u'$ and to conclude $\iota_*:H(u_m^i,0)\rightarrow H(u_m^i,u')$ is an
isomorphism. By excision, we also have $H(\sigma)=H(u_m^i,u')$. Thus, we have
representative classes for the bar $b$ before and after the update satisfy
$[u_m^i]=[u_m^{i+1}]$ in the common space $B_{p,i}$.
}

\textbf{Case(RS.birth):} $c_m$ is homogeneous in $R_t$. This case triggers a qualitative change in the Conley-Morse persistence barcode, namely, we create two new bars starting at the index $t+1$. One of these bars in degree $p$ corresponds to the new
non-homogeneous column $c_m=c_\sigma$ in $R_{t+1}$ if $\sigma$ is a $p$-simplex.
The other bar in degree $p-1$ corresponds to the column $c_k$ that was targeted by the homogeneous column $c_m$ in $R_t$ 
because $c_k$ becomes non-targeted in $R_{t+1}$. We have chains
$u^{t+1}_m=\sigma$ in $B_2$ and $u^t_m$ in $B$. 
We have $[(\partial\sigma)|_{B_1}]\not=0$
because 
$c_m$ is homogeneous in $R_t$. Then, there is no $e\in \Cgroup(B_1)$ 
where
$\sigma+e\in \Zgroup(B)$. Then, by Proposition~\ref{prop:reversing-the-split-map}.\ref{it:reversing-the-split-map-B2}, we have $h([u^{t+1}_m])=f([\sigma])=0$. For the same reason
as in Case(RS.continue), for all other non-homogeneous non-targeted columns $c_\ell$,
we have $h([u_\ell])=g([u_\ell])=[u_\ell]$.

Both Case(RS.continue) and Case(RS.birth) take $O(n)$ time because they mainly involve changing a single column in $R_t$ and $U_t$.

\subsection{Left Merge (LM)}
Let $c_j=c_\sigma$ be the sole column in a block $B_1$ (has a single cell $\sigma$) that is merged on the left of a block $B_2$ with the ordered set of columns $c_{j+1},\ldots, c_m$ to become a new merged block $B$ consisting of the ordered set of columns $c_j,c_{j+1},\ldots, c_m$.
The column $c_j=c_\sigma$ was necessarily non-homogeneous in $R_t$ 
with its corresponding column
$u_j$ being uniform, which contains a single non-zero entry in the diagonal of $U_t$. The column $c_j$ remains non-homogeneous in $R_{t+1}$ and the column $u_j$ remains uniform. All homogeneous columns in
$R_t$ remain homogeneous and reduced. 
The left merge mirrors the left split and thus has two cases. 

\textbf{Case(LM.continue):} Column $c_j=c_\sigma$ is not targeted  
after merge. The bar associated with $c_j$ with the representative $[u_j^t]=[\sigma]$ remains associated with $c_j$ after merging with the 
representative $u^{t+1}_j=[\sigma]$. Just as in Case(LS.continue), 
one can show $h([\sigma])=g([\sigma])=[\sigma]$ and for all other non-homogeneous and
non-targeted columns $c_k$ in $B_2=B\setminus \{\sigma\}$, we have $h([u_k])=f([u_k])=[u_k]$. This case takes $O(1)$ time.

\textbf{Case(LM.death):} Column $c_j$ is
    targeted after the merge and let $c_k$ in $B$ targets $c_j$.
Then, $c_k$ becomes homogeneous after the merge along with possibly
other columns $c_{i_1},\ldots, c_{i_\ell}$ in $B$ where $\low(c_{i_1})=\cdots=\low(c_{i_\ell})=\low(c_k)$.
Observe that every addition $c_{i_j}\leftarrow c_k+c_{i_j}$, $j\in \{1,2,\ldots,l\}$, makes $c_{i_j}$ non-homogeneous. This is because the index of $\low(c_{i_j})$ after the addition becomes smaller than the index of $\low(c_k)$, which is $j$.
Thus, updated columns that were non-homogeneous in $R_t$ remain non-homogeneous
after the addition with $c_k$. We also add the corresponding columns 
in $U_t$. At this point, $R_{t+1}\leftarrow R_t$ becomes
block reduced and $U_{t+1}\leftarrow U_t$ becomes uniform. 
The bar $b$ associated with the column $c_j$ and the bar $b'$ associated
with the column $c_k$ in $R_t$ stop at index $t$ because $c_k$ becomes homogeneous targeting $c_j$ in $R_{t+1}$. Due to $O(n)$ column additions, this case
takes $O(n^2)$ time.

The chain $u_j^t=\sigma$ in $B_1$ becomes a boundary in $B$  
as in Case(LS.birth) and thus $h([u_j^t])=g([u_j^t])=0$. Similarly,
the argument in Case(LS.birth) can be applied to claim
$h([u_k])=f([u_k])=0$ and 
$h([u^{t+1}_\ell])=[u^t_\ell]$ for
every non-homogeneous, non-targeted column $c_\ell$ that changed
representative from $[u^t_\ell]$ to $[u^{t+1}_\ell]=[u^t_\ell+u_k]$.

\subsection{Right Merge (RM)}
Let $c_m=c_\sigma$ be the sole column in a block merged to the right of a block with the ordered set of columns $c_j,\ldots, c_{m-1}$ to become a new merged block $B$ consisting of the ordered set of columns $c_j,c_{j+1},\ldots, c_m$. After merging, we reduce the column $c_m$ within $B$ and update $U_{t}$ accordingly. 
This makes $R_{t+1}\leftarrow R_t$ reduced and $U_{t+1}\leftarrow U_t$ uniform, which do not require other matrix updates.
Right merge mirrors the right split giving two cases.

\textbf{Case(RM.continue)}: $c_m=c_\sigma$ remains non-homogeneous after the merge and reduction. 
Then, no other updates are required.
Similarly to the right split, we can argue that the bar associated with $c_m$ 
remains associated with it. Specifically, the representative
$[u^t_m]$ changes to the representative $[u^{t+1}_m]$ where $h([u^{t+1}_m])=[u^t_m]$ and
for all other non-homogeneous non-targeted columns $c_\ell$, $u^t_\ell$ remain
unchanged, giving $h([u^t_\ell])=g([u^t_\ell])=[u^{t+1}_\ell]$.

\textbf{Case(RM.death):} $c_m=c_\sigma$ becomes homogeneous after the merge and reduction. 
The column
$c_m$ was necessarily non-homogeneous and non-targeted in $R_t$ as a single column 
in its block.
This means $c_m$ was associated with a bar, say $b$, in the barcode. When $c_m$ becomes homogeneous after the merge, the bar $b$ and the bar $b'$ associated with its targeted column $c_k$ ends at the index $t$. This case takes $O(n^2)$ time for $O(n)$
column additions used to reduce $c_m$. Mirroring Case(RS.birth), we 
can claim $h([u^t_m])=f([\sigma])=0$ and $h([u_k])=g([u_k])=0$ and representatives
for all other bars remain unchanged.
\section{Transposition}
\label{sec:transpose-shuffle}
In transposition, two columns in a block $B$ are exchanged and thus the spaces $B$ and
$H(B)$ do not change.
Let $c_j$ and $c_{j+1}$ be two columns in a block $B$ that are transposed. Corresponding
rows $r_j$ and $r_{j+1}$ are also transposed. Depending on cases as detailed below, we take different actions. We will appeal to the following result which follows from the proof of Proposition 4.3 in~\cite{DeHaLi2025}
in the context of reductions for connection matrices. However, it
applies in the context of block reductions as proposed here.

\begin{proposition}
    Let $c$ be any column in a block reduced matrix. If $r_k$ is the pivot row
    of $c$, then $c_k$ is a non-homogeneous column.
    \label{prop:pivot-hom}
\end{proposition}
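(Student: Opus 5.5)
The plan is to translate the statement into a "positive simplex" argument from ordinary persistence, using the fact that the linear order on cells underlying $D_t$ is a filtration. Throughout, let $R=DU$ with $R$ block reduced and $U$ upper triangular; uniformity of $U$ is not needed. Let $c$ be a column with pivot $r_k$, and let $\sigma_k$ lie in block $B$. Suppose for contradiction that $c_k$ is homogeneous, with $\low(c_k)=r_m$ for some $r_m$ in $B$ (so $m<k$).

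\textbf{Step 1 (prefixes are closed).} I first check that $K_i\coloneqq\{\sigma_1,\ldots,\sigma_i\}$ is closed for every $i$. Take a face $\tau$ of $\sigma\in B_{q}$. There is an edge $\sigma\to\tau$ in $\GV$, so admissibility puts $\tau$ in a block $B_{p}$ with $p\leq q$. If $p=q$, the within-block ordering places $\tau$ before $\sigma$. Hence every $K_i$ is a closed subcomplex, and this linear order is a genuine filtration.

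\textbf{Step 2 ($\sigma_k$ is ``positive'').} The column $c=\partial u$ is a cycle supported in $K_k$ and contains $\sigma_k$. Write $c=\sigma_k+z$ with $z\in\Cgroup(K_{k-1})$; then $\partial\sigma_k=\partial(-z)$. Since $U$ is upper triangular with unit diagonal, $u_k=\sigma_k+w$ with $w\in\Cgroup(K_{k-1})$. Therefore
\[
c_k=\partial u_k=\partial y,\qquad y\coloneqq w-z\in\Cgroup(K_{k-1}).
\]
Because $U$ restricted to the first $k-1$ indices is invertible upper triangular, I can expand $y=\sum_{i<k}b_iu_i$. This gives the relation
\[
c_k=\sum_{i<k}b_i c_i .
\]

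\textbf{Step 3 (locating the pivots of the $c_i$).} I sort the columns $c_i$ with $i<k$ and $b_i\neq0$ into three cases, using the block structure of $R$.
\begin{itemize}
\item[(a)] If $\sigma_i$ is in a block earlier than $B$, then by Step 1 the column $c_i$ only has entries in rows preceding the first row of $B$. All of these rows lie strictly above $r_m$.
\item[(b)] If $\sigma_i\in B$ and $c_i$ is non-homogeneous, its pivot lies outside $B$, in an earlier block. So again all its entries lie above $r_m$.
\item[(c)] If $\sigma_i\in B$ and $c_i$ is homogeneous, then block reducedness gives pivots that are pairwise distinct and different from $r_m$, since $c_k$ already has pivot $r_m$ and there are no conflicts.
\end{itemize}

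\textbf{Step 4 (contradiction).} Suppose some column of type (c) has pivot below $r_m$. Take the one whose pivot is lowest. That entry cannot be cancelled by any other column of type (a), (b) or (c), yet it lies below $\low(c_k)$, which contradicts the relation in Step 2. Otherwise, every column of type (c) has pivot strictly above $r_m$. Then no term on the right-hand side has a nonzero entry in row $r_m$, while $c_k$ does, which is again a contradiction. Hence $c_k$ is non-homogeneous.

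I expect the main obstacle to be Steps 1 and 3, namely justifying carefully that the cells of earlier blocks, and the pivots of non-homogeneous columns of $B$, all sit strictly above every row of $B$. Everything in Step 4 rests on this comparison.
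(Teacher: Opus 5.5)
Your proof is correct. Step 1 supplies exactly what is needed: the linearized admissible order is a filtration in which blocks are contiguous intervals. Step 2 correctly turns ``$r_k$ is the pivot of the cycle $c=\partial u$'' into the relation $c_k=\sum_{i<k}b_ic_i$. Your sorting of the $c_i$ then shows that only homogeneous columns of $B$ can have entries in rows of $B$, so the lowest-pivot comparison gives a contradiction in both sub-cases.

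The paper gives no argument of its own here. It only says the statement follows from the proof of Proposition~4.3 in the companion work on connection matrices, and asserts that this proof carries over to block reductions. Your route is therefore a genuinely self-contained alternative: the block analogue of the classical persistence fact that a pivot row belongs to a positive simplex. Here ``positive'' means a column that is zero or has its pivot outside its own block. Your proof also makes explicit which hypotheses are used: the cell order is a filtration, $R=DU$ with $U$ invertible upper triangular, and $R$ is block reduced. Uniformity of $U$ is never used, so you obtain a slightly more general statement than the one the citation is invoked for. That is convenient, since the paper applies the proposition in the middle of update steps. The citation buys brevity; your argument buys independence from the connection-matrix setting and transparency about the hypotheses.

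Two cosmetic points. Over a general field, write $c=a\sigma_k+z$ with $a\neq 0$, and allow a nonzero rather than unit diagonal in $U$. In case (b), note that zero columns, which also count as non-homogeneous, trivially contribute nothing.
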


In all cases below, we first perform the column/row exchanges in both matrices
$R_t$ and $U_t$. The columns $c_j$ and $c_{j+1}$ are exchanged in $R_t$ and
the corresponding columns $u_j$ and $u_{j+1}$ are exchanged in $U_t$. The corresponding
rows $r_j$ and $r_{j+1}$ are exchanged in both $R_t$ and $U_t$. Observe that 
with these exchanges, we still have the property that $R_t=D_{t+1}U_t$ ($D_{t+1}$ is $D_t$ with column/row exchanged) though $U_t$ may not
remain upper triangular, uniform, and $R_t$ may not remain reduced, which we fix
by appropriate updates for different cases. Since we do not essentially
change any block by transposition, the map $h$ is an isomorphism in all cases.
However, for the block $B$, we may have a change of basis for $H(B)$ which
is reflected in the change of representatives for the bars associated to columns
$c_j$ and/or $c_{j+1}$. Unlike split and merge, 
columns associated with the bars may change as the columns are exchanged.

In the following, all case labels refer to the status
of the columns in $R_t$ prior to the exchange.

Case(i): Both $c_j$ and $c_{j+1}$ are homogeneous.\\
Case(i.1): The column $c_j$ has been added to $c_{j+1}$ in $R_t$, which 
after the column/row exchanges, means that $c_{j+1}$ has been added to $c_j$. This apparent right-to-left addition makes $U_t$ lose upper triangularity. We cancel the addition by adding $c_{j+1}$ back to $c_j$. 
The corresponding addition of $u_{j+1}$ to $u_j$ fixes $U_t$ to be upper triangular.
Next, we reduce $R_t$. 

If $R_t$ is already reduced at this point, no other updates are necessary. If not,
we claim that the only column that may not be reduced is $c_{j+1}$ because it conflicts with
$c_j$. To see this, first observe that column exchanges and column additions
so far have affected only $c_j$ and $c_{j+1}$. So, any other column
can lose its reduced status only by the row exchange. However, no column
could have $r_j$ or $r_{j+1}$ as its pivot row because that would violate
Proposition~\ref{prop:pivot-hom}. Exchanging non-pivot entries in a column
does not affect its pivot and hence its reduced status.

To reduce $c_{j+1}$,
we add $c_j$ to $c_{j+1}$ which, in effect, makes $c_{j+1}$ 
the original $c_{j+1}$ before exchange and thus reduced. To reflect this
addition in $U_t$, we also add column
$u_j$ to $u_{j+1}$ in $U_t$. At this point, $R_{t+1}\leftarrow R_t$ is reduced, $U_{t+1}\leftarrow U_t$ is upper
triangular, and $R_{t+1}=D_{t+1}U_{t+1}$.


\noindent Case(i.2): The column $c_j$ has not been added to $c_{j+1}$ in $R_t$. In this case, after the column/row exchange, we already have the $R_t$ block reduced, so no other
updates are necessary.

In both cases of Case(i.1) and Case(i.2), no column
changes its homogeneity and targetability status. So, no qualitative update is necessary on the barcode except that all active bars at $t$ extend to index $t+1$ while preserving their column associations and representatives. Indeed, the isomorphism $h$ in these cases does not change basis.\\

Case(ii): $c_j$ is homogeneous but $c_{j+1}$ is non-homogeneous.\\
Case(ii.1): $c_j$ has been added to $c_{j+1}$ in $R_t$. After the column/row exchange, we perform column updates as in Case(i.1) to cancel the (apparent) addition of
$c_{j+1}$ to $c_j$ and then reduce $c_{j+1}$. One can check that, in this case,
after these column updates, $c_j$ and $c_{j+1}$ remain homogeneous
and non-homogeneous, respectively, while $c_{j+1}$ remains the same as before the exchange.

As pointed out in Case(i.1), for row exchange, only columns $c_k$ that cannot possibly remain reduced are columns that have $r_j$ and $r_{j+1}$ as pivot rows.
No such column could have $r_{j}$ as a pivot row because $c_{j}$ was
homogeneous. So, assume that a column $c_k$ had $r_{j+1}$ as a pivot row, and therefore $c_{j+1}$ was targeted. After the exchange $r_j\leftrightarrow r_{j+1}$, $r_{j}$ cannot become a pivot row
of $c_k$ because $c_{j}$ remains homogeneous, creating a contradiction
to Proposition~\ref{prop:pivot-hom}. So, after the row exchange
$r_{j+1}$ has to be the pivot row of $c_k$ making $c_{j+1}$ targeted.
Other columns $c_k$ that did not have $r_{j+1}$ as pivot are not affected
by row exchanges and remain reduced because they could not have $r_j$ as pivot row either. This implies that the homogeneity and targetability of columns
in $R_{t+1}$ remain the same as in $R_t$. In particular, the
non-homogeneous column $c_{j+1}$ remains non-targeted if and only if it was so in
$R_t$. Furthermore, since $c_{j+1}$ does not change from $R_t$ to $R_{t+1}$,
every column $u_\ell$ remains unchanged from $U_t$ to $U_{t+1}$ (modulo the row exchange) where $c_\ell$
was non-homogeneous and non-targeted. This means that the isomorphism $h$ does not
change basis, which implies no qualitative change in the barcode, 
the bars that were alive at $t$ still continue to be alive at $t+1$ with the same column
association and representatives.\\

Case(ii.2): $c_j$ has not been added to $c_{j+1}$ in $R_t$. In this case, no column
updates are necessary after the column/row exchange to make $U_t$ upper triangular.
The matrix $R_t$ remains reduced even as the column $c_j$ and $c_{j+1}$
exchange their homogeneity, that is, $c_j$ becomes non-homogeneous
and $c_{j+1}$ becomes homogeneous. Similar analysis to Case(ii.1) shows that
$c_j$ becomes targeted if and only if $c_{j+1}$ was targeted before $R_t$ was changed.
So, we can draw the same conclusion about the bars and their association with columns
as in Case(ii.1) except that the bar (if any) associated with the column $c_{j+1}$ in $R_t$ becomes associated with the column $c_j$ in $R_{t+1}$.
The representative of the
bar remains the same before and after the exchange, since
we have $u_{j}\leftarrow u_{j+1}$ with the exchange. The isomorphism
$h$ in this case also preserves the basis.\\

Case(iii): $c_j$ is non-homogeneous but $c_{j+1}$ is homogeneous in $R_t$.
In this case, $c_j$ could not have been added to $c_{j+1}$. Therefore, after the exchanges of columns/rows, as in Case(ii.2), no column
updates are necessary to make $U_t$ upper triangular and $R_t$ reduced.
Since new $c_j$ was actually $c_{j+1}$ before and new $c_{j+1}$ was actually
$c_j$ before, the
columns $c_j$ and $c_{j+1}$ exchange their homogeneity/targetability status.
We can draw the same conclusion about the bars and their representatives as in Case(ii.2).\\

Case(iv): Both $c_j$ and $c_{j+1}$ are non-homogeneous. Since $c_j$ could not have been
added to $c_{j+1}$ in $R_t$, the column/row exchange does not need
any other column updates to keep these two columns reduced, and to keep $U_t$ upper triangular and uniform. However, depending on the targetability of $c_j$ and $c_{j+1}$ before the exchange, 
the exchange of rows may trigger some column additions and 
adjustment in bar association to columns. 

First, observe that any column $c_k$ that was non-homogeneous in $R_t$ remains
non-homogeneous after row exchange $r_j\leftrightarrow r_{j+1}$. 
So, no other update is necessary for these columns. For homogeneous
columns, we have
one of the three cases before the exchange.\\
Case(iv.1): $c_k$ is homogeneous and targets
$c_j$ where $c_{j+1}$ is not targeted. After the exchange
$c_k$ remains reduced and targets $c_{j+1}$. Other homogeneous columns remain reduced
because they cannot have $r_j$ or $r_{j+1}$ as a pivot row.
The bar $b$ associated with
$c_{j+1}$ in $R_t$ gets associated with $c_j$ after the exchange in $R_{t+1}$ without
change in representative. \\
Case(iv.2) $c_k$ is homogeneous and targets $c_{j+1}$ where $c_j$ is not targeted. 
The row exchange either makes $c_k$ target $c_j$ if $R_t[j,k]=0$ or target $c_{j+1}$ if
$R_t[j,k]=1$. In both cases $c_k$ and all other homogeneous columns $c_{k'}$, $k'\not=k$, remain reduced
because any such homogeneous column cannot have $r_j$ or $r_{j+1}$ as pivot. 
In the first case,
the bar $b$ associated with $c_j$ gets associated with $c_{j+1}$ after the exchange, 
thus without any change in representative.

In the second case
$b$ remains associated with $c_{j}$ after the exchange.
Observe that in this case 
the representative chain for the bar $b$ changes before and after the transposition.
It changes from $u_j$ to $u_{j+1}$
due to the exchange $c_j\leftrightarrow c_{j+1}$. The isomorphism $h$ in this
case changes the basis, namely, it maps the basis element $u_j$ before the exchange 
to $u_{j+1}$ after the exchange while
preserving all others.
\cancel{
Denoting $u^t_{j}:=u_j$ and $u^{t+1}_{j+1}:=u_{j+1}$,
we need that their classes satisfy $[u_j^t]=[u_{j+1}^{i+1}]$ in the block they belong to.
We modify $u_{j+1}$ to achieve this. The column $c_k$ targets $u_{j+1}$ and contains
both $\sigma_j$ and $\sigma_{j+1}$ (both $R[j,k]=1$ and $R[j+1,k]=1$) where $c_j=c_{\sigma_j}$ and $c_{j+1}=c_{\sigma_{j+1}}$. We have $c_k=\partial u_k$. Consider the chain
$u_{j+1}':=c_k+u_j$ which does not contain $\sigma_j$ but contains $\sigma_{j+1}$.
We also have $[u_j]=[u_{j+1}'+c_k]=[u_{j+1}']+[\partial u_k]=[u_{j+1}']$.
We replace $u_{j+1}\leftarrow u_{j+1}'$ and $c_{j+1}\leftarrow \partial u_{j+1}'$.
}

Case(iv.3) $c_k$ is homogeneous and targets, say $c_j$, where another homogeneous column $c_{k'}$ targets $c_{j+1}$. Assume that $k<k'$. If $R_t[j,k']=1$,
then the exchange of rows makes 
$r_{j+1}$ the pivot row of both $c_k$ and $c_{k'}$. We add
$c_k$ to $c_{k'}$ making $r_j$ the pivot row of $c_{k'}$. This means that 
$c_k$ and $c_{k'}$ exchange pivots after column addition and both get reduced. The same happens in the case $R_t[j,k']= 0$. All other homogeneous
columns $c_\ell$ where $\ell\neq k,k'$ remain reduced because they cannot have
$r_j$ or $r_{j+1}$ as a pivot row. We update $U_t$ to reflect any addition
in $R_t$. The case $k>k'$ is similar. In these cases, no changes are needed for the
barcode because there is no change in homogeneity and targetability of columns in $R_{t+1}$.
In other words, the isomorphism $h$ does not change the basis.

\section{Some aspects of Conley-Morse persistence modules}
\subsection{Simplex-wise expansion}
\label{sec:elementary-expansion}
\begin{theorem}\label{thm:elementary-expansion}
        Every filtration $\zzBD$ of block partitions can be converted into an elementary filtration of block partitions $\zzBD'$ where the
        Conley-Morse persistence barcode for $\zzBD$ can be read from the Conley-Morse 
        persistence barcode of $\zzBD'$.
\end{theorem}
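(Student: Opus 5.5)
We prove the theorem by refining each step $\cB_t \inovscr \cB_{t+1}$ of $\zzBD$ into a sequence of elementary operations. By the standing assumption (or the reduction in~\cite{CMbarcodes2025}), each step is an atomic refinement. So we may assume that $\cB_{t+1}$ is obtained from $\cB_t$ by splitting one block $B=B_1\sqcup B_2$, with $B_1$ closed in $B$, or by the reverse merge. Both linearizations are also allowed to reorder cells arbitrarily, subject to admissibility.

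\textbf{Step 1: reordering.} Before the split, we choose a linearization of the admissible order of $\cB_t$ in which the cells of $B$ appear consecutively. The cells of $B_1$ come first, followed by those of $B_2$; this is a valid face-compatible order inside $B$ because $B_1$ is closed in $B$. We move from the linearization used for $D_t$ to this one by adjacent column swaps. A swap inside one block is a \textbf{Transposition}. A swap of adjacent cells in different, incomparable blocks I would realize with a left/right merge into a temporary two-cell block, a transposition, and a split back. Since the blocks are incomparable, the merged set is locally closed and the intermediate partition is still a block partition. Alternatively, and more simply, I would argue that the block boundary matrix, and hence the module, is unchanged up to relabeling, so such a swap is allowed to be a trivial step.

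\textbf{Step 2: peeling off cells.} Once $B$ is laid out as $B_1$ followed by $B_2$, we perform the split one cell at a time. We repeatedly \textbf{Left Split} the first cell of $B_2$ off the remaining block, or \textbf{Right Split} the last cell of $B_1$. Each intermediate stage $B_1\cup\{\text{singletons}\}\cup B_2'$ must be a valid block partition. Local closedness holds because prefixes of a face-compatible order are closed and suffixes are open. Admissibility holds because $G_\cV$-paths between the pieces respect the linear order. The pieces must then be reassembled into $B_1$ and $B_2$ by \textbf{Left/Right Merges} of singletons. Merges are handled symmetrically, by reversing the construction.

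\textbf{Step 3: reading the barcode.} The new steps in $\zzBD'$ are inserted between the old grades $t$ and $t+1$, and each old grade appears in $\zzBD'$ as some grade $t'$. The module $\mathcal G(\zzBD)$ is the restriction of $\mathcal G(\zzBD')$ along the grades $t\mapsto t'$, where each inserted composite of maps recovers the map $h$ of the original atomic step. Restricting a path-module decomposition to these grades yields a path-module decomposition, supported on the restricted paths, with empty restrictions dropped. By uniqueness, this gives the barcode of $\zzBD$. The transposition isomorphisms play the role of the connecting isomorphisms $\gamma$.

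\textbf{Main obstacle.} The hard part is showing that the composite of the inserted $f$-, $g$-, and $\gamma$-type maps equals the original structural map $h_{p,q}$ up to the basis choice. I would check this via Proposition~\ref{prop:reversing-the-split-map}: the zigzag of peelings and merges composes to $g$ on $W_1$, zero on $W_2$, and $(j_\ast|_{X_2})^{-1}$ on $V_2$. I would also compare against the index-pair description, using Proposition~\ref{prop:CMmodule-simplified}.
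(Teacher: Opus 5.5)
Your construction in Steps 1--2 is essentially the paper's. The paper expands a split $B=B_1\sqcup B_2$ recursively: it extracts a top-dimensional $\sigma\in B_2$ (splitting $B$ into $B\setminus\{\sigma\}$ and $\{\sigma\}$), splits $B\setminus\{\sigma\}$ into $B_1$ and $B_2\setminus\{\sigma\}$, and merges $\{\sigma\}$ back into $B_2$. Unwound, this peels the cells of $B_2$ off one at a time and then reassembles them. Your peeling is mis-stated, though. With $B$ laid out as $B_1$ followed by $B_2$, the first cell of $B_2$ and the last cell of $B_1$ are interior columns, so no left or right split removes them. You should instead right-split the last column of $B_2$ repeatedly, or left-split the first column of $B_1$.

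\textbf{The genuine gap is Step~3.} Once the peeled part has to be reassembled, the inserted steps between grades $t'$ and $(t+1)'$ form a valley: splits down to singletons, then merges back up. So in the paper's route the reassembled $(B_2,(t+1)')$ and $(B,t')$ are incomparable in the new poset. Hence $\mathcal{G}(\zzBD)$ is \emph{not} the restriction of $\mathcal{G}(\zzBD')$ to the old grades. No composite of structure maps recovers $f\colon H(B_2)\to H(B)$, because the route runs against the merge maps $f_3\colon H(\{\sigma\})\to H(B_2)$ and $g_3\colon H(B_2\setminus\{\sigma\})\to H(B_2)$, which are not invertible. Only $g=g_1\circ g_2$ is recovered as a composite, up to the identity $\id_3$.

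What the paper proves instead, in Proposition~\ref{prop:single-step-of-the-expansion}, is
\[
f=(f_1\circ f_3^{-1})+(g_1\circ f_2\circ g_3^{-1}),
\]
where $f_3^{-1}$ and $g_3^{-1}$ are partial inverses on the complementary summands $\im f_3$ and $\im g_3$ of $H(B_2)$. Two features matter:
\begin{itemize}
\item It is a \emph{sum over two routes}, one through $\{\sigma\}$ and one through $B_2\setminus\{\sigma\}$, not a single composite.
\item It holds only after the bases of the local splits are chosen compatibly with $S_{V_1}\cup S_{V_2}$, i.e.\ $S_{V_2}$ is subdivided into $S_{V_2}^{f_3}\cup S_{V_2}^{g_3}$ and $S_{X_1}$ into $S_{X_1}^{f_1}\cup S_{X_1}^{g_1}$.
\end{itemize}
Its proof is a case analysis on a basis cycle $u$ of $H(B_2)$: whether $f([u])\neq 0$, and whether $\sigma$ occurs in $u$. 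Each case applies Proposition~\ref{prop:reversing-the-split-map} to the relevant local split.

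Only this identity lets each path summand of $\mathcal{G}(\zzBD')$ be transported to a submodule of $\mathcal{G}(\zzBD)$, so that Proposition~\ref{prop:composition} applies. The remaining bars are then spurious ones living only on inserted grades. Your ``Main obstacle'' paragraph names the right tool, but aims it at a statement (``the zigzag composes to $h$'') that is not well posed. This identity is the actual content of the proof, and it is missing from your proposal.

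A secondary issue is Step~1:
\begin{itemize}
\item Swapping adjacent columns from two different blocks destroys the contiguity of block columns that a block boundary matrix requires.
\item Merging into a ``temporary two-cell block'', transposing and splitting back does not move one block past another.
\item A pure relabelling is not one of the three elementary operations.
\end{itemize}
The paper's proof does not treat block reorderings either; they appear only informally, as Shuffle, in the concluding remarks. So this is not where the difficulty lies, but you should not pass it off as trivial.
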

We prove Theorem~\ref{thm:elementary-expansion} by showing a single step of the expansion of a zigzag filtration of block partitions toward an elementary simplex-wise filtration.
The proof follows by its recursive application.

Let $(B_1, B_2)$ be a split of a block $B$.
Assume that $B_1$ is closed in $B$ and $\sigma$ is a top-dimensional simplex in $B_2$. 
Let us recall the general formula for the split together with the constructed homomorphisms derived in Section~\ref{sec:construction-of-structural-map-h}.
\begin{equation}\label{eq:lefschetz_ar_split_regular-appendix}
    \begin{tikzcd}[row sep=tiny, column sep=2.5cm]
        & H(\bl_2)=V_1\oplus V_2\arrow[ld, "f=0\oplus (j_\ast|_{X_2})^{-1}",sloped]\\
        X_1\oplus X_2 = H(\bl) & \\
        & H(\bl_1)=W_1\oplus W_2\arrow[lu, "g=(i_\ast|_{W_1})\oplus 0",swap, sloped]
    \end{tikzcd}
\end{equation}
We expand the split by extracting $\sigma$ from $B_2$ as presented below, 
    where each pair $f_l$, $g_l$, for $l\in\{1,2,3\}$ arises from a local split:
\begin{equation}\label{diag:simplex-wise-expansion}
    \begin{tikzcd}
        & & H(\{\sigma\})\arrow[ld, "\id_2", sloped]\arrow[r, "f_3"] & H(B_2)\eqqcolon V \\
        & H(\{\sigma\})\arrow[ld, "f_1", sloped] & H(B_2\setminus\{\sigma\})\arrow[ld, "f_2", sloped]\arrow[ru, "g_3", sloped] & \\
        X\coloneqq H(B) & H(B\setminus\{\sigma\})\arrow[l, "g_1", swap] & H(B_1)\arrow[l, "g_2", swap]\arrow[r, "\id_3"] & H(B_1)\eqqcolon W
    \end{tikzcd}
\end{equation}
We show direct correspondence between maps in diagrams~\eqref{eq:lefschetz_ar_split_regular-appendix} and~\eqref{diag:simplex-wise-expansion}.

\begin{proposition}\label{prop:single-step-of-the-expansion}
    Let $(B_1, B_2)$ be a split of a block $B$.
    Assume that $B_1$ is closed in $B$ and $\sigma$ is a top-dimensional simplex in $B_2$. 
    Consider maps as in diagrams~\eqref{eq:lefschetz_ar_split_regular-appendix} and~\eqref{diag:simplex-wise-expansion}.
    We have $g= g_1\circ g_2$ and $f=(f_1\circ f_3^{-1}) + (g_1\circ f_2\circ g_3^{-1})$,
    where $f:H(B_2)\rightarrow H(B)$ and $g:H(B_1)\rightarrow H(B)$ are defined in Section~\ref{sec:construction-of-structural-map-h}.
\end{proposition}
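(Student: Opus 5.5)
The plan is to work at the chain level, using the characterization of the structural maps in Proposition~\ref{prop:reversing-the-split-map}, after checking that every local split in diagram~\eqref{diag:simplex-wise-expansion} is a legitimate split with a closed part.

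\textbf{Step 1: the three local splits are legitimate.} First I check that the closed part of each split is indeed closed.
\begin{itemize}
\item $B\setminus\{\sigma\}$ is closed in $B$. Suppose $\sigma\prec\tau$ with $\tau\in B$. Since $\sigma$ is top-dimensional in $B_2$, we get $\tau\in B_1$. Since $B_1$ is closed in $B$, this forces $\sigma\in B_1$, a contradiction.
\item $B_1$ is closed in $B\setminus\{\sigma\}$, because it is closed in $B$.
\item $B_2\setminus\{\sigma\}$ is closed in $B_2$, because $\sigma$ is top-dimensional in $B_2$.
\end{itemize}
Hence $g_1,g_2,g_3$ are the maps of type $g$, and $f_1,f_2,f_3$ are the maps of type $f$ from Section~\ref{sec:construction-of-structural-map-h}.

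\textbf{Step 2: $g=g_1\circ g_2$.} By Proposition~\ref{prop:structure-of-i-j-ast}, $g=(i_\ast|_{W_1})\oplus 0$ coincides with $i_\ast$, since $W_2=\ker i_\ast$. The same holds for $g_1$ and $g_2$. For closed subsets the inclusions $\Cgroup(B_1)\hookrightarrow\Cgroup(B\setminus\{\sigma\})\hookrightarrow\Cgroup(B)$ are chain maps. Through the isomorphisms of Proposition~\ref{prop:lefschetz_relative_homology} they are compatible with the index-pair maps $i'_\ast$. Functoriality of homology then gives $g=g_1\circ g_2$.

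\textbf{Step 3: splitting $V$.} For the formula for $f$, I interpret $f_3^{-1}$ and $g_3^{-1}$ as partial inverses. In the split $B_2=(B_2\setminus\{\sigma\})\sqcup\{\sigma\}$, Proposition~\ref{prop:structure-of-i-j-ast} gives
\[
V=\im g_3\oplus \im f_3 ,
\]
where $g_3$ is an isomorphism onto the first summand on the $W_1$-part, and $f_3$ is an isomorphism onto the second summand on the $V_2$-part. Accordingly, $g_3^{-1}$ and $f_3^{-1}$ are the inverses on these summands, extended by zero on the complementary summand. By linearity it then suffices to check the identity on basis classes of $V$ lying in each summand.

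\textbf{Step 4: the summand $\im g_3$.} Take a class $[u]$ with $u\in\Zgroup(B_2\setminus\{\sigma\})$. Here $f_3^{-1}[u]=0$.
\begin{itemize}
\item By Proposition~\ref{prop:reversing-the-split-map}.\ref{it:reversing-the-split-map-B2} applied to the split of $B\setminus\{\sigma\}$, we have $f_2([u])=[u+e]$ if some $e\in\Cgroup(B_1)$ makes $u+e$ a cycle whose class lies in the $X_2$-basis; otherwise $f_2([u])=0$.
\item Applying $g_1$, which is induced by the closed inclusion, gives the same chain $u+e$, now viewed in $B$.
\item The same proposition applied to the original split $(B_1,B_2)$ with the same $e$ gives $f([u])=[u+e]$.
\end{itemize}
So the two sides agree on this summand.

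\textbf{Step 5: the summand $\im f_3$.} Take $[u]=f_3([\sigma])$, where $u=\sigma+e'$ with $e'\in\Cgroup(B_2\setminus\{\sigma\})$. Here $g_3^{-1}[u]=0$.
\begin{itemize}
\item $f_1([\sigma])$ is the class $[\sigma+e'']$ in $B$ with $e''\in\Cgroup(B\setminus\{\sigma\})$, when such a cycle exists.
\item Write $e''=e'+e$ with $e\in\Cgroup(B_1)$. The cycle $u+e=\sigma+e''$ is then exactly the witness $d$ required in Proposition~\ref{prop:reversing-the-split-map}.\ref{it:reversing-the-split-map-B2} for $f([u])$.
\item If no such cycle exists, both sides are $0$.
\end{itemize}

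\textbf{Main obstacle: compatible bases.} The maps $f,f_1,f_2,f_3$ depend on the chosen complements $X_2$ and $V_1$. The identity can only hold if these choices are made coherently: the basis $S_{X_2}$ for the big split should be taken as the union of $g_1(S_{X_2}$ of the $B\setminus\{\sigma\}$ split$)$ and the $X_2$-basis of the $\{\sigma\}$ split. Likewise, $S_V$ should be assembled from the images of $g_3$ and $f_3$.

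I would first fix the bases of the three local splits. I would then define the bases for the original split from them, and verify that the result is admissible in the sense of ``Choosing bases and spaces''. In particular, the assembled set must give a basis of $X$ extending $S_{X_1}$, with $j_\ast$ mapping it bijectively onto $S_{V_2}$. That verification, a short exact-sequence and dimension count across the two-step filtration $B_1\subset B\setminus\{\sigma\}\subset B$, is where I expect the real work to lie.
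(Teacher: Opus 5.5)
Your chain-level verification is the paper's argument in different packaging. The paper takes a basis class $[u]$ of $V$ and splits into cases according to whether $\sigma\in u$ (your summands $\im g_3$ and $\im f_3$) and whether $f([u])\neq 0$. In each case it applies Proposition~\ref{prop:reversing-the-split-map}.\ref{it:reversing-the-split-map-B2} to the relevant local split, and it gets $g=g_1\circ g_2$ by functoriality exactly as in your Step 2.

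The gap is the step you postpone, and the order you propose for it does not work. The paper fixes $S_{V_1}\cup S_{V_2}$ for the original split first and chooses the local bases compatibly with it. You instead fix the three local bases independently and then assemble $S_{X_2}$ from $g_1$ of the middle split's $X_2$-basis together with the $X_2$-basis of the split $B=(B\setminus\{\sigma\})\sqcup\{\sigma\}$. With independent local choices this assembly can fail outright:
\begin{itemize}
\item Take $B=\{x,y,xy\}$, $B_1=\{y\}$, $\sigma=xy$.
\item For the middle split $\{x,y\}=\{y\}\sqcup\{x\}$, the basis $\{[y],[x]\}$ is admissible, so $[x]$ may serve as its $X_2$-basis.
\item But $g_1([x])=[x]_B=[y]_B$ spans $X_1=\im g=H(B)$.
\end{itemize}
So the assembled set is not a basis of any complement of $X_1$, and no dimension count repairs that. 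In general, $\ker g_1$ is spanned by $[\partial_B\sigma]_{B\setminus\{\sigma\}}$. Unless this class lies in $\im g_2$ or in the span of the middle $X_2$-basis, $g_1$ maps that span onto a subspace meeting $\im g$ nontrivially.

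The same hidden coordination sits inside your Step 5. The restriction of the cycle $\sigma+e''$ to the open set $B_2$ is the cycle $\sigma+e''|_{B_2\setminus\{\sigma\}}$. It differs from $u=\sigma+e'$ by a cycle of $B_2\setminus\{\sigma\}$ whose class in $H(B_2)$ is $j_\ast f_1([\sigma])-f_3([\sigma])$. You need to write $e''=e'+e$ with $e\in\Cgroup(B_1)$, possibly after changing $\sigma+e''$ by a boundary, to identify $\sigma+e''$ as the witness $d$ for $f([u])$. That is possible only if $f_3([\sigma])$ was chosen to be $j_\ast f_1([\sigma])$. For any other admissible complement in the third split it fails. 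Step 4 likewise tacitly uses $g_3^{-1}[u]=[u]_{B_2\setminus\{\sigma\}}$, which depends on the chosen complement of $\ker g_3$.

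What is missing is an explicit choice of bases for the three local splits that are coordinated with one another, not only with the original split. For instance:
\begin{itemize}
\item put $[\partial_B\sigma]_{B\setminus\{\sigma\}}$ into the middle $X_2$-basis when it is not in $\im g_2$;
\item set $f_3([\sigma])\coloneqq j_\ast f_1([\sigma])$ when $f_1\neq 0$.
\end{itemize}
You also need a proof that the resulting family, including the bases assembled for $(B_1,B_2)$, is admissible for every split. Since the identity depends on these choices, this coherence is what makes it hold, and your proposal leaves it open with a plan that fails as stated.
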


It follows that we can easily retrieve the barcode for~\eqref{eq:lefschetz_ar_split_regular-appendix}
    by computing the barcode for~\eqref{diag:simplex-wise-expansion}, 
    one only needs to discard spurious bars generated by the expansion of the filtration, 
    similarly to the standard persistence computations.
The expansion step can be repeatedly applied to $\zzBD$ until we obtain an elementary filtration $\zzBD'$.


\begin{proof}[Proof of Proposition~\ref{prop:single-step-of-the-expansion}]
    First, we will show the equality for $f$. 
    Let $u\in\Zgroup(B_2)$ be such that $[u]$ is a basis element in $V$ consistent with the choice of basis $S_{V_1}\cup S_{V_2}$ described in Section~\ref{sec:construction-of-structural-map-h}.
    
    Notice that $\{\sigma\}$ and $B_2\setminus\{\sigma\}$ form a split of $B_2$, 
        and we follow the same construction to define $f_3$ and $g_3$.
    We can choose bases for this elementary split compatible with $S_{V_1}\cup S_{V_2}$.
    In particular, we obtain a subdivision of $S_{V_2}$ into $S_{V_2}^{f_3}$ and $S_{V_2}^{g_3}$ associated with $f_3$ and $g_3$, respectively.
    Similarly, for the split of $B$ into $\{\sigma\}$ and $B\setminus\{\sigma\}$ we subdivide $S_{X_1}$ into $S_{X_1}^{f_1}$ and $S_{X_1}^{g_1}$.
    
    \textbf{Case 1.} 
        Assume that $f([u])\neq 0$.
        Then, by construction $[u]_{V}\in S_{V_2}$.
        Additionally, by Proposition~\ref{prop:reversing-the-split-map}.\ref{it:reversing-the-split-map-B2}, 
            there exists $e\in\Cgroup(B_1)$ such that 
            $d\coloneqq u+e\in\Zgroup(B)$ and $f([u]_{V})=[d]_X$.
        We have two subcases to consider:
        
    \textbf{Case 1.1} 
        $\sigma\not\in u$.
        Since $\sigma$ is a top dimensional simplex, 
            it follows that $u$ is still a cycle in $B_2\setminus\{\sigma\}$,
            $g_3([u])=[u]$ and therefore $[u]\in S_{V_2}^{g_3}$.
        Note that the chosen $d$ and $e$ satisfies the condition of Proposition~\ref{prop:reversing-the-split-map}.\ref{it:reversing-the-split-map-B2} for the $B\setminus\{\sigma\}=B_1\cup(B_2\setminus\{\sigma\})$ split.
        Thus, we deduce that $f_2([u])=[d]$.
        By the assumption we know $[d]$ is a basis element of $X$,  
            hence $g_1([d])=[d]$ and $f([u])=(g_1\circ f_2\circ g_3^{-1})([u])=[d]$.
        
    \textbf{Case 1.2}
        $\sigma\in u$.
        Then it follows that $[u]\in S_{V_2}^{f_3}$.
        By construction $f_3^{-1}([u])=[\sigma]$ and $g_3^{-1}([u])=0$.
        Now, let $v\in\Cgroup(B)$ be such that $u=\sigma+v$, in particular, 
            $d=\sigma+v+e\in\Zgroup(B)$.
        Therefore, by Proposition~\ref{prop:reversing-the-split-map}.\ref{it:reversing-the-split-map-B2} we have $f_1([\sigma])=[d]$.
        Thus, $f([u])=(f_1\circ f_3^{-1})([u])$.
        
    \textbf{Case 2} Assume that $f([u])=0$, then by construction $[u]_V\in S_{V_1}$.
       By Proposition~\ref{prop:reversing-the-split-map}.\ref{it:reversing-the-split-map-B2} there is no $d$ as in \textbf{Case 1}.
       
    \textbf{Case 2.1} 
        Suppose that $\sigma\not\in u$.
        Then, by similar argument as in \textbf{Case 1.1}, we have $g_3^{-1}([u])=[u]$.
        However, since $d$ does not exists, then again by 
            Proposition~\ref{prop:reversing-the-split-map}.\ref{it:reversing-the-split-map-B2}
            we have $f_2([u])=0$.
        Therefore, $(g_1\circ f_2\circ g_3^{-1})([u])=0$.
        
    \textbf{Case 2.2} 
        Assume that $\sigma\in u$.
        As in \textbf{Case 1.2} we show that $f_3^{-1}([u])=[\sigma]$, 
            and again, since $d$ doesn't exists we have $f_1([\sigma])=0$.
        This finishes the proof that $f=(f_1\circ f_3^{-1}) + (g_1\circ f_2\circ g_3^{-1})$.

    The equivalence for $g$ follows from the fact that $g$, $g_1$ and $g_2$ all are directly induced by inclusion.
    Therefore, for any $u\in\Zgroup(B_1)$ we have $(g_1\circ g_2)([u]_W)=g_1([u])=[u]_X = g([u]_W)$.
    \qed
\end{proof}

\subsection{Connecting the splits: an example}
    \label{sec:appendix_connecting_the_splits}

In Section~\ref{sec:construction-of-structural-map-h} 
    we split the spaces $X$, $W$, and $V$ 
    and choose the bases for them to fix the structural maps $f$ and $g$ for the triple in the Conley-Morse persistence module.
However, it may happen that the space $H(\bl)$ of a block $B$ splits differently in the forward and the backward direction of the filtration.
Since the structural maps $f$, $g$ are defined by the local choice of bases
    we may end up with incompatible structural maps in the forward and the backward direction.

A situation like this emerges, for instance, when a block $B$ is partitioned differently when going forward and backward.
Filtration $\zzBD:\cB_0\inscr\cB_1\ovscr\cB_2$ in Figure~\ref{fig:base-change} exemplifies such possibility.
Block $\bl_{1,1}$ splits differently when going backward and forward.
In step $\cB_0\inscr\cB_1$ edge $\bl_{2,0}=\{ab\}$ merges with $\bl_{1,0}$ forming $\bl_{1,1}$,
    whereas in the second step, edge $bc$ splits from $\bl_{1,1}$.
Let us write $\hat{X}_1\oplus \hat{X}_2$ for the backward split of $H(\bl_{1,1})$, $X_1\oplus X_2$ for its forward split, and $\hat{f}$, $\hat{g}$, $f$, and $g$ for the corresponding structural maps.
To complete the picture let us introduce isomorphism $\gamma$ connecting the two splits.
We summarize this with the diagram below:

\begin{equation}\label{eq:lefschetz_ar_split_regular_mirrored}
\begin{tikzcd}[row sep=small, column sep=2.7cm]
    \hat{V}_1\oplus \hat{V}_2 
        \arrow[rd, "\hat{f}=0\oplus (j_\ast|_{\hat{X}_2})^{-1}", sloped]
        & & &
        V_1\oplus V_2
            \arrow[ld, "f=0\oplus (j_\ast|_{X_2})^{-1}", sloped] \\
    &
    \hat{X}_1\oplus \hat{X}_2 \arrow[r, "\gamma", shorten >=6pt, shorten <=6pt]
    &
    X_1\oplus X_2
    \\
    \hat{W}_1\oplus \hat{W}_2
        \arrow[ru, "\hat{g}=(i_\ast|_{\hat{W}_1})\oplus 0", swap, sloped]
        & & &
        W_1\oplus W_2
            \arrow[lu, "g=(i_\ast|_{W_1})\oplus 0", swap, sloped]
\end{tikzcd}
\end{equation}

We emphasize that $\gamma$, that is, the change of basis, is not simply a homomorphism induced by the identity map.
The decomposition theorem~\cite[Theorem~7.6]{CMbarcodes2025}, and the gentle algebra behind this theorem,
    constructs $\gamma:\hat{X}_1\oplus \hat{X}_2\rightarrow X_1\oplus X_2$ implicitly in such a way that the fixed basis elements of $\hat{X}$ are bijectively mapped into basis elements of $X$ (note that, e.g., element of $\hat{X}_1$ can be mapped into $X_2$).
In our algorithm, $\gamma$ is also constructed implicitly within the transposition step, which does not modify the space and its partition, but changes the representatives of the basis when needed.

Consider the bases in Figure~\ref{fig:base-change} (top).
Let the green edges $\alpha_1$ and $\alpha_2$ be representatives for the bases chosen for the first split and the orange edges $\beta_1$ and $\beta_2$ be representatives for the second split. 
This choice of bases is unambiguous, 
    the homomorphism induced by the identity gives a correct bijection between the bases elements,
    that is,
    $\gamma([\alpha_1])=[\beta_2]$ and $\gamma([\alpha_2])=[\beta_1]$.
    
On the other hand, if we choose representatives as in the bottom panel of Figure~\ref{fig:base-change} 
    then the identity induced homomorphism would map 
        $[\alpha_1]$ into $[\beta_1 + \beta_2]$ and $[\alpha_2]$ into $[\beta_2]$.
However, by the decomposition theorem, 
    there exists a consistent way of connecting the bases that leads to the string decomposition.
As described in Section~\ref{sec:split-and-merge} merging the edge $ab$ into block $\bl_{1,0}$ puts $ab$ at the beginning of the section in the boundary matrix corresponding to $\bl_{1,1}$.
This follows from the fact that $\bl_{2,0}$ is lower than $\bl_{1,0}$ in the poset on elements of $\cB_0$ (see Section~\ref{subsec:mvf}).
For the same reason, to extract $bc$ from $\bl_{1,1}$, 
    we need to move $bc$ to the beginning of the $\bl_{1,1}$.
Therefore, at least one transposition step is required between the merge and the split, that is, the transposition of $ab$ and $bc$.
If we follow the transposition step for this example we would notice that $\alpha_1$ continues to $\beta_2$ and $\alpha_2$ to $\beta_1$.

\begin{figure}[hbt]
    \centering
    \includegraphics[width=0.8\linewidth]{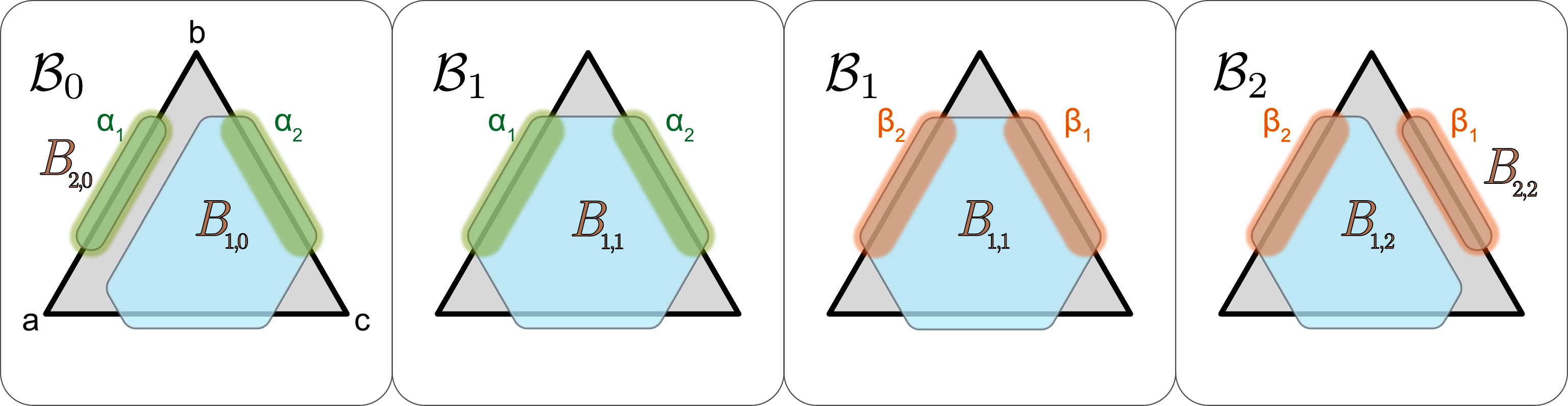}
    \vspace{0.2cm}
    
    \includegraphics[width=0.8\linewidth]{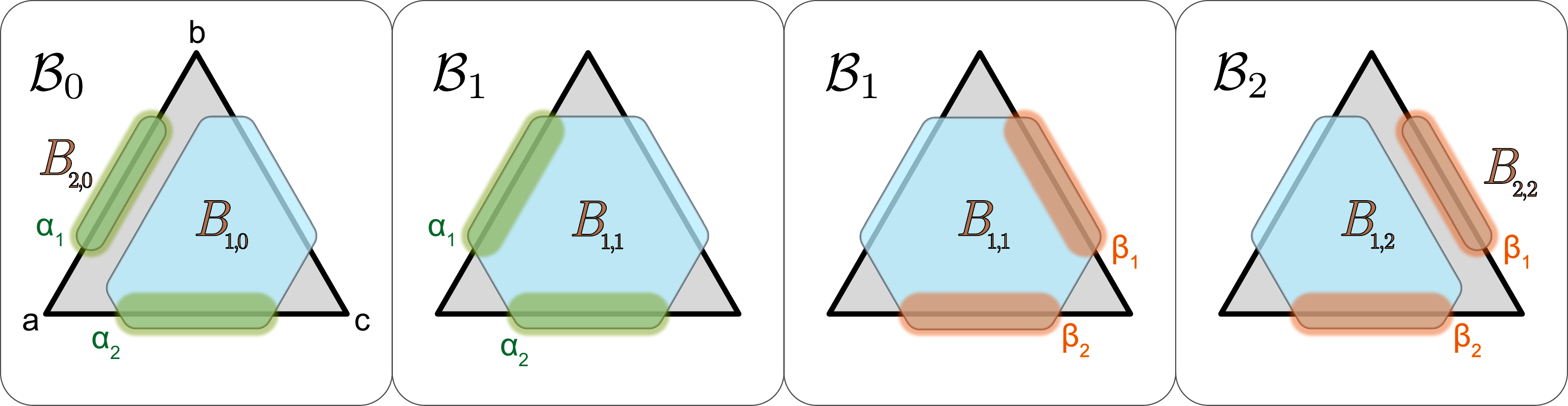}
    \caption{
        A filtration of block partitions $\cB_0\inscr\cB_1\ovscr\cB_2$.
        The top row shows a case when the chosen representatives $\alpha_1$, $\alpha_2$, and $\beta_1$, $\beta_2$ are compatible across the splits.
        On the other hand, the bottom row presents an incompatible choice of representatives.
        }
    \label{fig:base-change}
\end{figure}

\section{Concluding remarks} The result in this paper makes
Conley-Morse
persistence barcode computation practical for elementary operations, and
is a step toward building
an usable software based on it.
Notice that a different linear order among the blocks can be obtained
from the current order by moving an entire block to its right or left.
This movement can be simulated by multiple Split, Merge and Transposition operations. 
To expedite this in practice, we may introduce an update called
Shuffle which does not use Split, Merge and Transposition.

\textbf{Shuffle}:
Suppose that a block $B$ is moved left. Then, we reorder the columns and the rows 
of both $R_t$ and $U_t$. We claim that $R_t$ remains block reduced and no other
updates are required for the barcode.

The blocks that precede $B$ in $R_{t}$ are not affected by the movement of $B$ because the columns of these blocks cannot have non-zero entries in rows that belong to block $B$.
This claim follows from Proposition 4.4 in~\cite{DeHaLi2026} and the fact that $B$ moves past only blocks that are incompatible
with $B$. We note that, to apply
this result, one needs that all columns are reduced only by homogeneous columns from
left. This is ensured by 
maintaining the matrix $U_t$ to be uniform by the update algorithm.  

 The block $B$ itself also does not change because the orders of its rows and columns do not change and the rows that have non-zero entries do not change order after the movement because they cannot belong to blocks succeeding $B$ after the movement.

Then, the only change happens for blocks that succeed $B$ in $R_t$ before the movement. Let $c$ be any reduced column in such a block. If $c$ is homogeneous in $R_t$, its pivot row remains the same because that column belongs to the same block. So, $c$ remains homogeneous
targeting the same column. So, except for possible reordering of its non-pivot
rows, no change is necessary for $c$. If $c$ is non-homogeneous in $R_t$, its rows including the pivot row may be reordered, but that does not affect anything else. 
In both cases, no change is necessary for the column corresponding to $c$ in $U_t$ because no extra additions are made to the column $c$. 

Over all cases, we see that $R_{t+1}$ remains reduced with $R_{t+1}=D_{t+1}U_{t+1}$ and
$U_{t+1}$ remaining uniform. Also, since $U_t$ does not change except permutation of its rows and columns, all bars continue without any change in their representatives.

\section*{Acknowledgment}
 T.D. acknowledges the support of NSF funds CCF-2437030 and DMS-2301360. M.L. acknowledges support from the European Union’s Horizon 2020 research and innovation programme under the Marie Skło\-dow\-ska-Curie Grant Agreement No.~101034413.

\bibliography{bibliography}

@inproceedings{milosavljevic2011zigzag,
  title={Zigzag persistent homology in matrix multiplication time},
  author={Milosavljevi{\'c}, Nikola and Morozov, Dmitriy and Skraba, Primoz},
  booktitle={Proceedings of the Twenty-Seventh Annual Symposium on Computational Geometry},
  pages={216--225},
  year={2011}
}

@inbook{Bubenik2024,
	address = {Cham},
	author = {Bubenik, Peter and Catanzaro, Michael J.},
	pages = {55--79},
	publisher = {Springer Nature Switzerland},
	title = {Multiparameter Persistent Homology via Generalized Morse Theory},
	year = {2024}}

@inproceedings{CaSiMo2009-zigzag,
	address = {New York, NY, USA},
	author = {Carlsson, Gunnar and de Silva, Vin and Morozov, Dmitriy},
	booktitle = {Proceedings of the Twenty-Fifth Annual Symposium on Computational Geometry},
	pages = {247--256},
	publisher = {Association for Computing Machinery},
	series = {SCG '09},
	title = {Zigzag persistent homology and real-valued functions},
	year = {2009}}

@book{DW22,
    author  =   {Tamal K. Dey and Yusu Wang},
    title   =   {Computational Topology for Data Analysis},
    publisher= {Cambridge University Press},
    year    =   {2022}
}

@article{DeHaLi2026,
	author = {Dey, Tamal K. and Haas, Andrew and Lipi\'{n}ski, Micha\l{}},
	doi = {10.1137/25M1739406},
	journal = {SIAM Journal on Applied Dynamical Systems},
	number = {1},
	pages = {108-130},
	title = {Computing a Connection Matrix and Persistence Efficiently from a Morse Decomposition},
	url = {https://doi.org/10.1137/25M1739406},
	volume = {25},
	year = {2026}}

@inproceedings{DeLiMrSl2022,
	author = {Dey, Tamal K. and Lipi\'nski, Micha\l{} and Mrozek, Marian and Slechta, Ryan},
	booktitle = {38th Symposium on Computational Geometry},
	title = {{Tracking dynamical features via continuation and persistence}},
	year = {2022}}

@book{Conley1978,
	address = {Providence, R.I.},
	author = {Charles Conley},
	publisher = {American Mathematical Society},
	series = {CBMS Regional Conference Series in Mathematics},
	title = {Isolated Invariant Sets and the {M}orse Index},
	volume = {38},
	year = {1978}}

@article{CMbarcodes2025,
	author = {Dey, Tamal K. and Lipi{\'n}ski, Micha{\l} and Soriano-Trigueros, Manuel},
	date = {2026/08/04},
	doi = {10.1007/s10208-026-09766-6},
	id = {Dey2026},
	isbn = {1615-3383},
	journal = {Foundations of Computational Mathematics},
	title = {Conley--{M}orse Persistence Barcode: A Homological Signature of Combinatorial Bifurcations},
	url = {https://doi.org/10.1007/s10208-026-09766-6},
	year = {2026}}

@article{Forman1998b,
	author = {Forman, Robin},
	journal = {Mathematische Zeitschrift},
	number = {4},
	pages = {629--681},
	title = {Combinatorial vector fields and dynamical systems},
	volume = {228},
	year = {1998}}

@article{Franzosa1988,
	author = {Robert D. Franzosa},
	journal = {Transactions of the American Mathematical Society},
	number = {2},
	pages = {781--803},
	title = {The Continuation Theory for {M}orse Decompositions and Connection Matrices},
	volume = {310},
	year = {1988}}

@article{Kim:2021wx,
	author = {Kim, Woojin and M{\'e}moli, Facundo},
	journal = {Discrete \& Computational Geometry},
	number = {3},
	pages = {831--875},
	title = {Spatiotemporal Persistent Homology for Dynamic Metric Spaces},
	volume = {66},
	year = {2021}}

@article{King2017,
	author = {Henry King and Kevin Knudson and Ne{\v z}a {Mramor Kosta}},
	journal = {Journal of Symbolic Computation},
	pages = {41-60},
	title = {Birth and death in discrete {M}orse theory},
	volume = {78},
	year = {2017}}

@article{Hotz2023,
	author = {Yan, Lin and Masood, Talha Bin and Rasheed, Farhan and Hotz, Ingrid and Wang, Bei},
	journal = {IEEE Transactions on Visualization \& Computer Graphics},
	month = aug,
	number = {08},
	pages = {3489-3506},
	title = {{ Geometry-Aware Merge Tree Comparisons for Time-Varying Data With Interleaving Distances }},
	volume = {29},
	year = {2023}}

@article{LKMW2022,
	title = {Conley-{Morse}-{Forman} theory for generalized combinatorial multivector fields on finite topological spaces},
	volume = {7},
	issn = {2367-1726, 2367-1734},
	doi = {10.1007/s41468-022-00102-9},
	number = {2},
	urldate = {2024-01-21},
	journal = {Journal of Applied and Computational Topology},
	author = {Lipiński, Michał and Kubica, Jacek and Mrozek, Marian and Wanner, Thomas},
	year = {2023},
	pages = {139--184},
}

@misc{MeLiCh2026,
      title={MS-COOT: Comparing Morse-Smale Complexes with Co-Optimal Transport}, 
      author={Guangyu Meng and Mingzhe Li and Erin Wolf Chambers},
      year={2026},
      eprint={2606.08258},
      archivePrefix={arXiv},
      primaryClass={cs.GR},
	howpublished = {arXiv:2606.08258},
	doi = {10.48550/arXiv.2606.08258}
}

@incollection{MischMro_Conley_2002,
	title = {The {C}onley Index},
	volume = {2},
	series = {Handbook of Dynamical Systems},
	pages = {393--460},
	booktitle = {Handbook of Dynamical Systems},
	publisher = {Elsevier Science},
	author = {Mischaikow, Konstantin and Mrozek, Marian},
	editor = {Fiedler, Bernold},
	year = {2002},
	doi = {10.1016/S1874-575X(02)80030-3},
	note = {{ISSN}: 1874-575X},
}

@article{Mrozek2017,
	author = {Marian Mrozek},
	journal = {Foundations of Computational Mathematics},
	number = {6},
	pages = {1585--1633},
	title = {{C}onley--{M}orse--{F}orman Theory for Combinatorial Multivector Fields on {L}efschetz Complexes},
	volume = {17},
	year = {2017}}

@book{MroWan2025,
	author = {Mrozek, Marian and Wanner, Thomas},
	edition = {1},
	month = {July},
	publisher = {Springer Cham},
	series = {SpringerBriefs in Mathematics},
	title = {Connection Matrices in Combinatorial Topological Dynamics},
	year = {2025},
    doi = {10.1007/978-3-031-87600-4}
}

@article{GuMuKh2022,
	author = {G{\"u}zel, {\.I}smail and Munch, Elizabeth and Khasawneh, Firas A.},
	journal = {Chaos: An Interdisciplinary Journal of Nonlinear Science},
	month = {09},
	number = {9},
	pages = {093111},
	title = {Detecting bifurcations in dynamical systems with CROCKER plots},
	volume = {32},
	year = {2022},
        doi = {10.1063/5.0102421}
}

@article{TyMuKh2020,
	author = {Tymochko, Sarah and Munch, Elizabeth and Khasawneh, Firas A.},
	journal = {Algorithms},
	number = {11},
	title = {Using Zigzag Persistent Homology to Detect Hopf Bifurcations in Dynamical Systems},
	volume = {13},
	year = {2020},
        pages = {1--16},
        doi = {10.3390/a13110278}}

@inproceedings{CohEdeMor2006,
	address = {New York, NY, USA},
	author = {Cohen-Steiner, David and Edelsbrunner, Herbert and Morozov, Dmitriy},
	booktitle = {Proceedings of the 26th Annual Symposium on Computational Geometry},
	pages = {119--126},
	publisher = {Association for Computing Machinery},
	series = {SCG '06},
	title = {Vines and vineyards by updating persistence in linear time},
	year = {2006}}

@article{MroBat2009,
	author = {Mrozek, Marian and Batko, Bogdan},
	journal = {Discrete \& Computational Geometry},
	number = {1},
	pages = {96--118},
	title = {Coreduction Homology Algorithm},
	volume = {41},
	year = {2009}}

@article{ReininghausHotz2012,
	author = {Reininghaus, Jan and Kasten, Jens and Weinkauf, Tino and Hotz, Ingrid},
	journal = {IEEE Transactions on Visualization and Computer Graphics},
	number = {9},
	pages = {1563-1573},
	title = {Efficient Computation of Combinatorial Feature Flow Fields},
	volume = {18},
	year = {2012}}

@incollection{EdelsHarer2008,
	author = {Edelsbrunner, Herbert and Harer, John},
	booktitle = {Surveys on Discrete and Computational Geometry: Twenty Years Later},
	pages = {257-282},
	publisher = {American Mathematical Society},
	title = {Persistent homology - a survey},
	year = {2008}}

\appendix
    
\cancel{
\section{Block partition as a filtration}
The following theorems show close relation between block decomposition and filtration of a complex.
\begin{proposition}\label{prop:BP_is_a_filtration}
    Let $\cB$ be a block partition of $\cV$.
    Then for each $p\in\PP$ the set $B_{\leq p}\coloneqq \bigcup_{q\leq p} B_q$ is closed.
    In particular, 
        if $\leq'$ is a linear extension of $\leq$ then family of sets $B_{\leq' p}$ induces a filtration on~$X$.
\end{proposition}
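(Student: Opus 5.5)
The plan is to unwind the definitions of block partition and of the graph $\GV$. Closedness is a statement about faces, and by construction every face relation gives an edge of $\GV$. The admissible order then forces faces to lie in blocks that are no higher in the order. We first prove the stronger, order-theoretic fact: for any subset $Q\subseteq P$ that is a down-set of $(P,\leq)$, the union $\bigcup_{q\in Q}B_q$ is closed. Both claims of the proposition then follow by choosing the down-set appropriately.

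\textbf{Down-sets give closed unions.} Fix a down-set $Q$ and put $A\coloneqq\bigcup_{q\in Q}B_q$. Take $\tau\in A$ and a cell $\sigma\in X$ with $\sigma\facerel\tau$.
\begin{itemize}
\item Since $\cB$ partitions $X$, there are unique indices with $\tau\in B_q$ for some $q\in Q$ and $\sigma\in B_r$ for some $r\in P$.
\item Because $\sigma\in\cl\tau\subseteq\cl\tau\cup[\tau]_\cV=\FV(\tau)$, the graph $\GV$ has the edge $(\tau,\sigma)$. This is a path from a cell of $B_q$ to a cell of $B_r$.
\item By the defining property of an admissible order, this path gives $r\leq q$.
\item Since $Q$ is a down-set and $q\in Q$, we get $r\in Q$, hence $\sigma\in A$.
\end{itemize}
So $A$ is closed. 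I will check in passing that the face relation $\facerel$ used in the definition of closed agrees with membership in $\cl\tau$; $\facerel$ is the order generated by $\inc$, and $\cl\tau$ contains all cells below $\tau$. If one prefers to work only with the generating relation, the same argument applies one step at a time along a chain of faces, since $\leq$ is transitive.

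\textbf{First claim.} Take $Q=\{q\in P\mid q\leq p\}$. This is trivially a down-set, so $B_{\leq p}$ is closed.

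\textbf{Second claim.} Let $\leq'$ be a linear extension of $\leq$ and set $Q'_p=\{q\mid q\leq' p\}$. This is again a down-set of $\leq$: if $r\leq q$ and $q\leq' p$, then $r\leq' q$ because $\leq'$ extends $\leq$, and so $r\leq' p$. Hence each $B_{\leq' p}$ is closed. Listing $P=\{p_1<'p_2<'\cdots<'p_m\}$, the sets $B_{\leq' p_k}$ increase with $k$. Consecutive sets differ by the single block $B_{p_k}$, and the last set is all of $X$ because $\cB$ partitions $X$. This is a filtration of $X$ by closed subcomplexes. Each difference $B_{p_k}=B_{\leq'p_k}\setminus B_{\leq'p_{k-1}}$ is locally closed, consistent with Proposition~\ref{prop:lefschetzhom}.

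No step is a real obstacle. The only point that needs care is the direction of the inequality: the convention is that a path from $B_q$ to $B_p$ implies $p\leq q$, and edges of $\GV$ go from a cell to its faces. Together these are exactly what make down-sets, rather than up-sets, closed.
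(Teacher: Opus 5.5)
Your proof is correct. For every face $\sigma\facerel\tau$ the graph $\GV$ has the edge $(\tau,\sigma)$, and the admissible-order condition then puts $\sigma$ in a block no higher than that of $\tau$. Your down-set formulation gets both claims at once, reading the statement's ``$p\in\mathbb{P}$'' as $p\in P$, as you tacitly do.

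The paper states this proposition without proof (it sits in a cancelled appendix block), so there is no argument of its own to compare against. Yours is the standard one. It is also the fact the paper relies on implicitly when it orders the columns of $D_t$ by a linearization of the admissible order.
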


\begin{proposition}\label{prop:filtration_is_a_BP}
    For any filtration of a Lefschetz complex $X$ \michal{do we need it for $p$-skeletons?}:
    \begin{align*}
        \cX: \emptyset=X_{0} \subset X_{1}\subset \ldots \subset X_{n} = X.
    \end{align*}
    The family of all connected components of all level sets of $\cX$ form a block partition for a certain multivector field $\cV$ on~$\cX$.
\end{proposition}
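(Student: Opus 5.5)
The plan is to take the components themselves as the multivectors. Then each block is a single multivector, and the admissible order is read off from the filtration index.

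Set $L_i\coloneqq X_i\setminus X_{i-1}$ for $i=1,\ldots,n$. I read ``connected component'' of $L_i$ as a component of the graph on $L_i$ whose edges join two cells when one is a face of the other, i.e.\ comparable under $\facerel$. Let $\cV$ be the family of all components $C$ of all $L_i$, and write $\ell(C)=i$ for the level of $C$. Since the $L_i$ partition $X$ and components partition each $L_i$, the family $\cV$ is a partition of $X$.

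\textbf{Step 1: every $C\in\cV$ is locally closed.} First, $L_i$ is locally closed. Indeed $\mo L_i\subset X_{i-1}$, and a short check shows $\cl L_i\setminus L_i = \cl L_i\cap X_{i-1}$, which is closed as an intersection of closed sets. Equivalently, if $\sigma\facerel\mu\facerel\tau$ with $\sigma,\tau\in L_i$, then $\mu\in X_i$ because $X_i$ is closed, and $\mu\notin X_{i-1}$ since otherwise $\sigma\in X_{i-1}$ by closedness of $X_{i-1}$.

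Now take $\sigma,\tau\in C$ and $\sigma\facerel\mu\facerel\tau$. The previous argument gives $\mu\in L_i$. Since $\mu$ is comparable with $\tau\in C$, it lies in the same component, so $\mu\in C$. Hence $\cV$ is a multivector field. Moreover, each $C$ is trivially a union of multivectors, so it is an isolating block.

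\textbf{Step 2: the admissible order.} On the index set $\cV$, define $C'\le C$ iff $C'=C$ or $\ell(C')<\ell(C)$. This is clearly a partial order.

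Consider an edge $\sigma\to\tau$ of $\GV$, so $\tau\in\cl\sigma\cup[\sigma]_\cV$. If $\tau\in[\sigma]_\cV$, both cells lie in the same $C$.

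Otherwise $\tau\facerel\sigma$. Closedness of $X_{\ell}$ gives $\ell([\tau])\le\ell([\sigma])$. If the levels are equal, then $\tau$ and $\sigma$ are comparable cells of the same $L_i$, so $[\tau]=[\sigma]$. I expect the main point requiring care to be here, namely the meaning of $\facerel$ as the transitive closure of nonzero incidences. If components are instead defined via single incidences, one needs local closedness of $L_i$ to put the intermediate cells of the incidence chain from $\tau$ to $\sigma$ inside $L_i$, and hence into the same component.

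Therefore every edge either stays in one component or strictly decreases the level. By induction along a path from $\sigma\in C_q$ to $\tau\in C_p$, either all steps stay in one component, giving $p=q$, or the level strictly drops at some step. Once it drops, it never returns to the original component, because levels are nonincreasing along the path. So $p\le q$.

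\textbf{Step 3: conclusion.} The family $\cV$ is therefore a block partition of $\cV$ with admissible order $\le$. For the ``in particular'' reading, note that any linear extension ordering components by level reproduces the filtration $X_i=\bigcup_{\ell(C)\le i}C$.
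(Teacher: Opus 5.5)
Your proof is correct. The paper gives no proof to compare it with: this proposition appears only in a suppressed (cancelled) appendix, stated without proof next to its converse-type companion.

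Taking the components themselves as multivectors, so that each block is a single multivector, and ordering blocks by filtration level is exactly the right construction. The dichotomy you prove does the work: every edge of $G_{\mathcal V}$ either stays inside a component or strictly lowers the level. The same dichotomy shows that any multivector field refining the components, for instance the field of singletons, would serve equally well. Your observation that the comparability-graph and single-incidence notions of component coincide, because $L_i$ is locally closed, is also right.

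Two small points.

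First, state explicitly that every $X_i$ is closed in $X$. You use this in both steps, and the statement is false without it. On the boundary of a triangle, the chain $\emptyset\subset\{a,ab\}\subset X$ has level sets $\{a,ab\}$ and $\{b,c,bc,ac\}$. The face edges $ab\to b$ and $ac\to a$ then create a cycle between these two sets in every $G_{\mathcal V}$, so no admissible order exists.

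Second, the closing sentence of Step~3 addresses the companion proposition rather than this one. Also, a linear extension of your order refines the filtration rather than reproducing it: the sets $X_i$ appear among its sublevel sets.
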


\begin{proposition}
    Let $B_p\in\cB$, where $\cB$ is a block partition of $\cV$.
    Then $(B_{\leq p}, B_{< p})$ is an index pair for $B_p$.
\end{proposition}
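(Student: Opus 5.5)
The plan is to verify directly the defining properties of an index pair as they are used in the paper. These are: a pair $(D,E)$ of closed sets with $E\subset D$ and $D\setminus E=B_p$. Once these hold, Proposition~\ref{prop:lefschetz_relative_homology} gives $H(B_p)\cong H(B_{\leq p},B_{<p})$, so the Conley index is recovered.

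\textbf{Step 1: nesting and difference.} The inclusion $B_{<p}\subset B_{\leq p}$ is immediate, since every $q<p$ also satisfies $q\leq p$. Because $\cB$ is a partition of $X$, the blocks $B_q$ are pairwise disjoint. Hence
\[
B_{\leq p}\setminus B_{<p}=\bigcup_{q\leq p}B_q\setminus\bigcup_{q<p}B_q=B_p .
\]

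\textbf{Step 2: closedness.} This is the only step with content, and it is where the admissible order enters. I will treat $B_{\leq p}$; the argument for $B_{<p}$ is identical with $\leq$ replaced by $<$.

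1. Take $\tau\in B_{\leq p}$, say $\tau\in B_q$ with $q\leq p$, and let $\sigma\in X$ satisfy $\sigma\facerel\tau$.
2. Since $\cB$ partitions $X$, there is a unique $r$ with $\sigma\in B_r$.
3. By definition $\FV(\tau)=\cl\tau\cup[\tau]_\cV\ni\sigma$, so $(\tau,\sigma)$ is an edge of $\GV$. In particular there is a path in $\GV$ from $\tau\in B_q$ to $\sigma\in B_r$.
4. The defining property of the admissible order for a block partition then gives $r\leq q$.
5. By transitivity $r\leq p$, so $\sigma\in B_{\leq p}$. Hence $B_{\leq p}$ is closed. (For $B_{<p}$ we get $r\leq q<p$.)

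The face relation $\facerel$ is generated by nonzero incidences, so it is transitive. I therefore expect no obstacle in passing from immediate faces to arbitrary faces: either use $\sigma\in\cl\tau$ directly, or induct along a chain of immediate faces, applying the path argument at each link.

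The only point I expect to need care with is making sure the notion of \emph{index pair} in force matches what is checked here. If the intended definition also demands something dynamical, such as positive invariance of $B_{<p}$ relative to $B_{\leq p}$ under $\FV$, then Step 2 already covers it. The argument above in fact shows that $\FV(B_{\leq p})\subset B_{\leq p}$. Indeed, for $\tau\in B_q$ with $q\leq p$, every edge $(\tau,\sigma)$ of $\GV$ is either a face edge, handled above, or stays inside the multivector $[\tau]_\cV\subset B_q$, since blocks are unions of multivectors. The same reasoning gives $\FV(B_{<p})\subset B_{<p}$, so any invariance-type condition follows from the same two observations.
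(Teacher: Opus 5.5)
Your proof is correct and follows the intended route: applying admissibility to the one-edge path $(\tau,\sigma)$ in $\GV$ shows that $B_{\leq p}$ and $B_{<p}$ are closed. In fact it shows they are forward invariant under $\FV$, which also covers the dynamical index-pair axioms. Disjointness of the blocks then gives $B_{\leq p}\setminus B_{<p}=B_p$, and that is all the paper's notion of index pair (closed nested sets with difference $B_p$) requires. The paper states this proposition without a separate proof, in a cancelled appendix block immediately after the companion claim that $B_{\leq p}$ is closed, and your Step 2 is exactly a proof of that companion claim.
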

}

\cancel{
\section{Preliminary implementation}
\label{sec:preliminary-implementation}

A notable advantage of the matrix update algorithm discussed here is ease of implementation: the elementary operations - [left/right]-[split/merge], transposition, and shuffling - are simple to carry out. Though we believe that many further optimizations are possible, we have implemented the matrix update algorithm as presented in the paper. We intend to continue iterating on this implementation.

Here we use this implementation to create two examples: a small, toy example with clear intuition and associated visuals; and a larger example. Each example consists of a block partitioned complex, an associated block boundary matrix, and a sequence of updates applied to yield a Conley-Morse persistence barcode.

\subsection{Small example}

\begin{figure}[H]
    \centering
    \includegraphics[width=.325\linewidth]{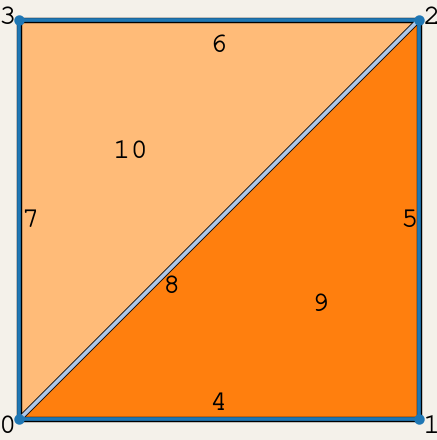}
    \caption{A small block partitioned simplicial complex. Blocks membership is denoted by color, and simplices are each given a unique numeric ID.
    }
    \label{fig:small_initial}
\end{figure}

Figure~\ref{fig:small_initial} shows a block partitioned simplicial complex. Several arbitrary atomic operations are performed:

\[
\resizebox{\textwidth}{!}{%
$
\begin{aligned}
& \xrightarrow{\makebox[4cm][c]{\text{\textbf{[1]} Reduction}}}
\left[
\begin{array}{c|cccccccc|c|c|c}
\textbf{ }  & 0 & 1 & 2 & 3 & 4 & 5 & 6 & 7 & 8 & 9 & 10 \\
\hline
0  &   &   &   &   & 1 &   &   &   & 1 &   &   \\
1  &   &   &   &   & 1 & 1 &   &   &   &   &   \\
2  &   &   &   &   &   & 1 & 1 &   & 1 &   &   \\
3  &   &   &   &   &   &   & 1 &   &   &   &   \\
4  &   &   &   &   &   &   &   &   &   & 1 &   \\
5  &   &   &   &   &   &   &   &   &   & 1 &   \\
6  &   &   &   &   &   &   &   &   &   &   & 1 \\
7  &   &   &   &   &   &   &   &   &   &   & 1 \\
8  &   &   &   &   &   &   &   &   &   & 1 & 1 \\
9  &   &   &   &   &   &   &   &   &   &   &   \\
10 &   &   &   &   &   &   &   &   &   &   &   \\
\end{array}
\right]
\xrightarrow{\makebox[4cm][c]{\text{\textbf{[2]} Right Merge}}}
\left[
\begin{array}{c|ccccccccc|c|c}
\textbf{ }  & 0 & 1 & 2 & 3 & 4 & 5 & 6 & 7 & 8 & 9 & 10 \\
\hline
0  &   &   &   &   & 1 &   &   &   &   &   &   \\
1  &   &   &   &   & 1 & 1 &   &   &   &   &   \\
2  &   &   &   &   &   & 1 & 1 &   &   &   &   \\
3  &   &   &   &   &   &   & 1 &   &   &   &   \\
4  &   &   &   &   &   &   &   &   &   & 1 &   \\
5  &   &   &   &   &   &   &   &   &   & 1 &   \\
6  &   &   &   &   &   &   &   &   &   &   & 1 \\
7  &   &   &   &   &   &   &   &   &   &   & 1 \\
8  &   &   &   &   &   &   &   &   &   & 1 & 1 \\
9  &   &   &   &   &   &   &   &   &   &   &   \\
10 &   &   &   &   &   &   &   &   &   &   &   \\
\end{array}
\right]
\\[1.5ex]
& \xrightarrow{\makebox[4cm][c]{\text{\textbf{[3]} Right Merge}}}
\left[
\begin{array}{c|cccccccccc|c}
\textbf{ }  & 0 & 1 & 2 & 3 & 4 & 5 & 6 & 7 & 8 & 9 & 10 \\
\hline
0  &   &   &   &   & 1 &   &   &   &   &   &   \\
1  &   &   &   &   & 1 & 1 &   &   &   &   &   \\
2  &   &   &   &   &   & 1 & 1 &   &   &   &   \\
3  &   &   &   &   &   &   & 1 &   &   &   &   \\
4  &   &   &   &   &   &   &   &   &   & 1 &   \\
5  &   &   &   &   &   &   &   &   &   & 1 &   \\
6  &   &   &   &   &   &   &   &   &   &   & 1 \\
7  &   &   &   &   &   &   &   &   &   &   & 1 \\
8  &   &   &   &   &   &   &   &   &   & 1 & 1 \\
9  &   &   &   &   &   &   &   &   &   &   &   \\
10 &   &   &   &   &   &   &   &   &   &   &   \\
\end{array}
\right]
\xrightarrow{\makebox[4cm][c]{\text{\textbf{[4]} Column Transpose}}}
\left[
\begin{array}{c|cccccccccc|c}
\textbf{ }  & 0 & 1 & 2 & 3 & 4 & 5 & 6 & 8 & 7 & 9 & 10 \\
\hline
0  &   &   &   &   & 1 &   &   &   &   &   &   \\
1  &   &   &   &   & 1 & 1 &   &   &   &   &   \\
2  &   &   &   &   &   & 1 & 1 &   &   &   &   \\
3  &   &   &   &   &   &   & 1 &   &   &   &   \\
4  &   &   &   &   &   &   &   &   &   & 1 &   \\
5  &   &   &   &   &   &   &   &   &   & 1 &   \\
6  &   &   &   &   &   &   &   &   &   &   & 1 \\
8  &   &   &   &   &   &   &   &   &   & 1 & 1 \\
7  &   &   &   &   &   &   &   &   &   &   & 1 \\
9  &   &   &   &   &   &   &   &   &   &   &   \\
10 &   &   &   &   &   &   &   &   &   &   &   \\
\end{array}
\right]
\end{aligned}
$%
}
\]

We label columns according to associated simplices in Figure~\ref{fig:small_initial}. In step \textbf{[1]} the block boundary matrix is reduced via the block independent reduction algorithm of section~\ref{sec:independent-block-reduction}; in steps \textbf{[2,3]} the leftmost block is merged with its right neighbor; and in step \textbf{[4]} the columns associated with simplices 7 and 8 are transposed, as are the corresponding rows.

Figure~\ref{fig:small_barcode} depicts, on the left, the final partitioned complex resulting from these three update operations. We see that three of the initial five bars persist throughout, and that several bars change in representative cycle and/or associated column.

\begin{figure}[H]
    \centering
    \includegraphics[width=\linewidth]{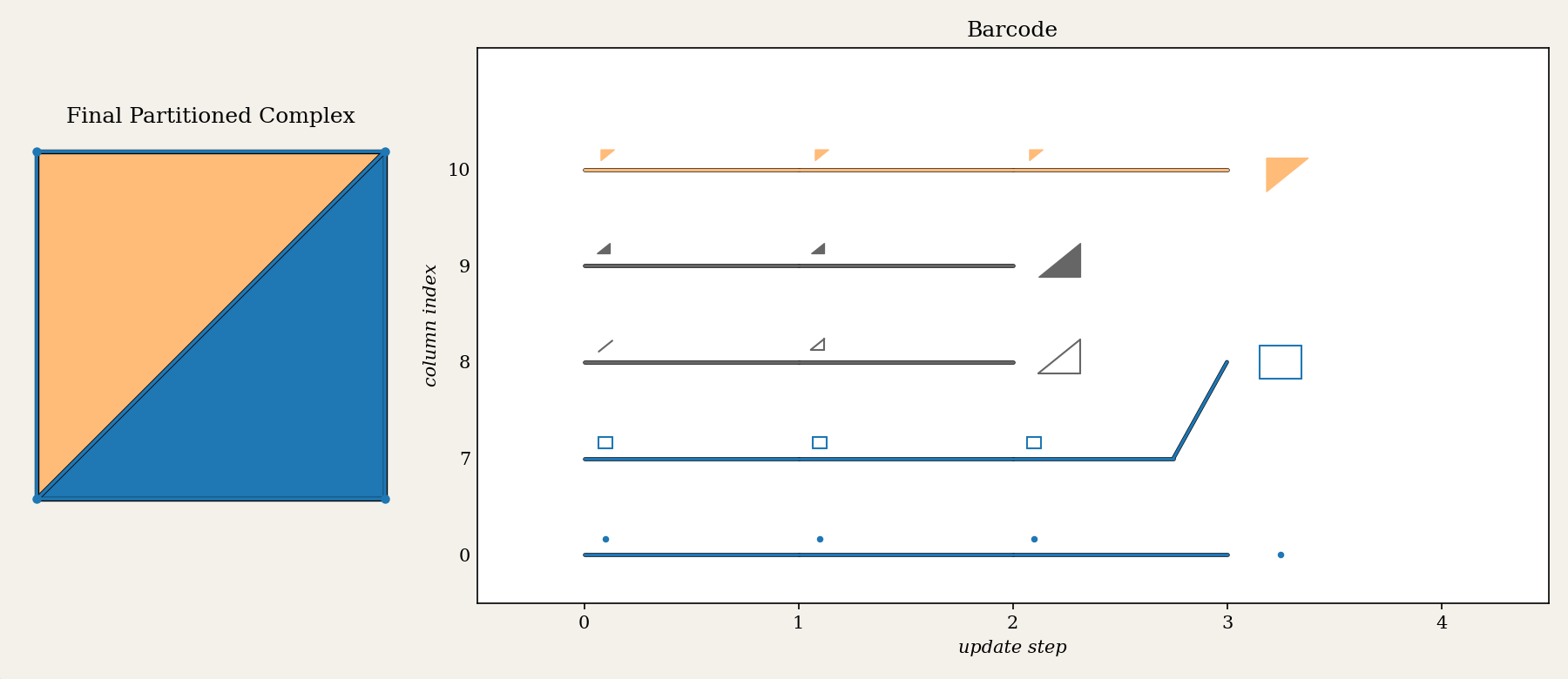}
    \caption{Left: the final block partitioned complex, after three updates have been performed. Right: the barcode associated with these updates. Each bar is associated, at each step, with a representative cycle (shown as a small glyph) and a column index (plotted on the vertical axis).
    }
    \label{fig:small_barcode}
\end{figure}

\subsection{Large example}

\begin{figure}[H]
    \centering
    \includegraphics[width=\linewidth]{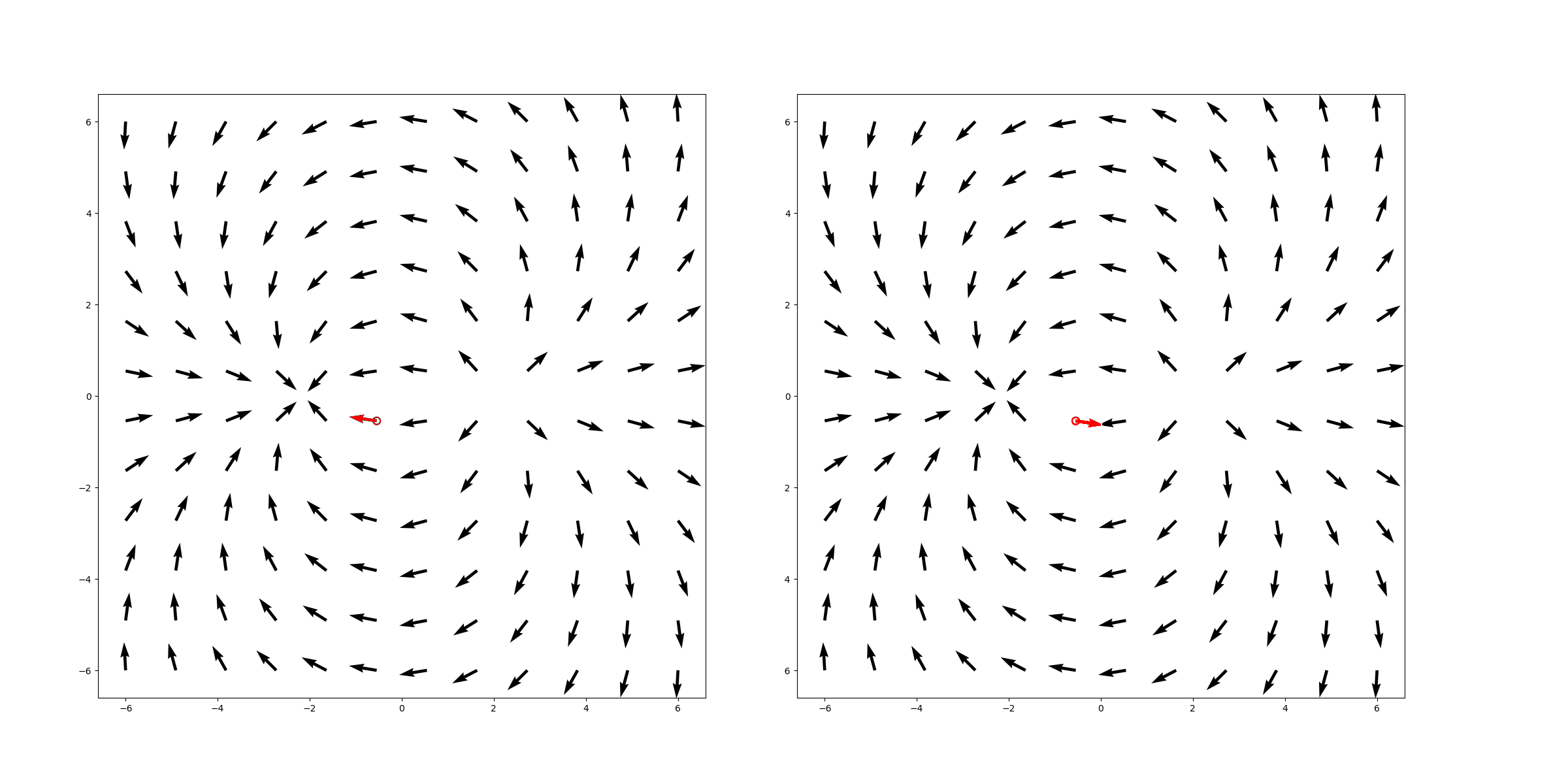}
    \caption{Left: a continuous vector field; Right: the same vector field, subject to a local transformation. Note the reversal of one highlighted arrow.
    }
    \label{fig:vectors_ba}
\end{figure}

Though we will not discuss this process in any detail, Figure~\ref{fig:vectors_ba} illustrates two fields which, when overlayed atop a triangulation in the plane, may be discretized to arrive at two differing multivector fields over the same simplicial complex, and thus two different block partitions of the same complex. We ``interpolate'' between these two block partitions using elementary update operations, yielding a barcode. The complex in question has approximately two-thousand simplices. Note the existence of many short-lived bars.

\begin{figure}[H]
    \centering
    \includegraphics[width=\linewidth]{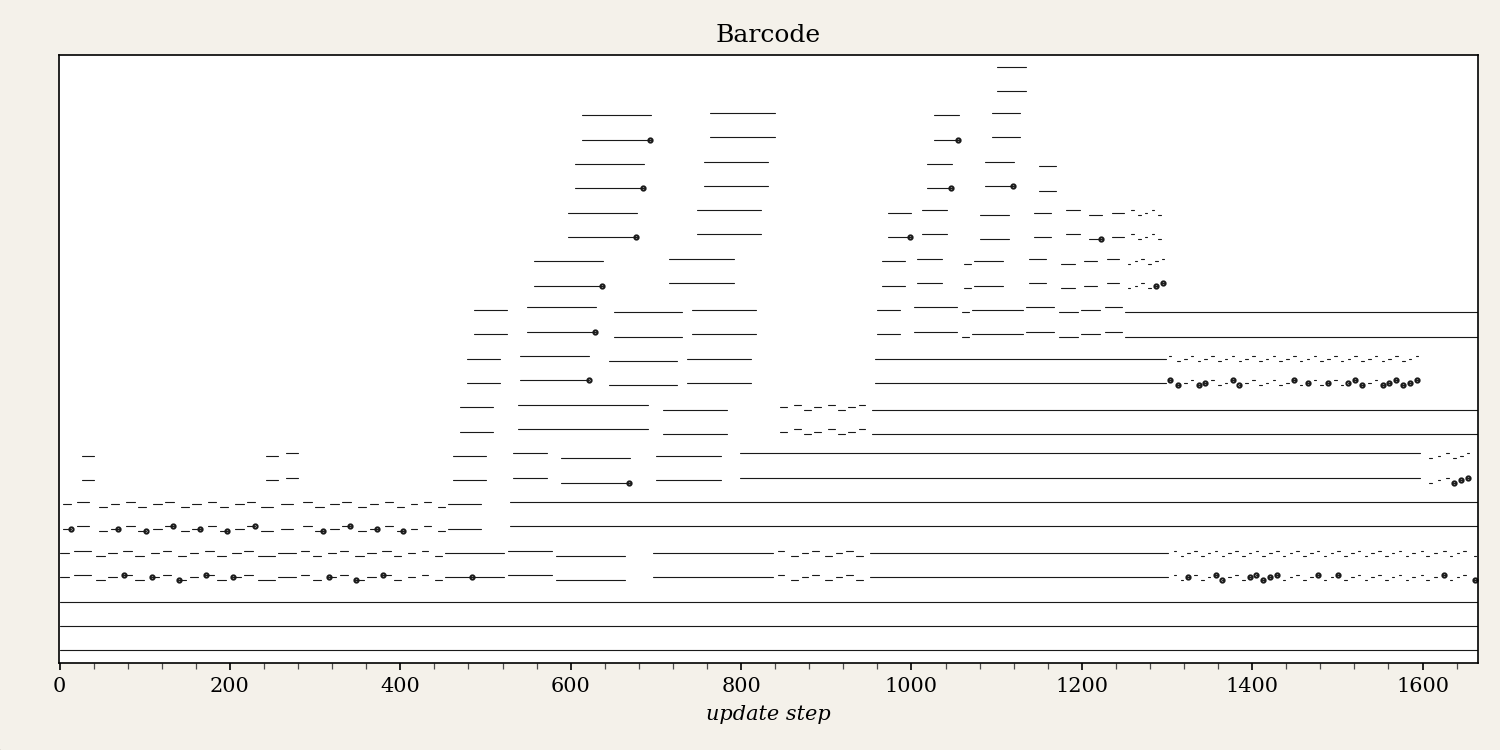}
    \caption{A simple barcode diagram. Bars persist for some number of update steps, each of these being an elementary operation. Each \textit{o} symbol marks a change in representative of a bar. Position on the vertical axis is entirely arbitrary.}
    \label{fig:large_barcode}
\end{figure}
}

\cancel{

The main property providing decomposability of the Conley-Morse persistence module is the attractor-repeller (AR) split theorem \cite[Theorem~5.2]{CMbarcodes2025}.
We adjust the result to the setting of this paper.

Let $\cV'\inscr\cV$ and 
	consider an isolating block $B$ in $\cV$ and its partition into blocks $B_1$ and $B_2$ in $\cV'$, that is $B=B_1\cup B_2$ and $B_1\cap B_2=\emptyset$. 
Moreover, assume that $\cl B_1 \cap B_2=\emptyset$.
Let us denote sets
    $N_0\coloneqq\mo B$, 
    $N_1\coloneqq B_1\cup N_0$, and
    $N_2\coloneqq \cl B$.
It is easy to check that 
    $B_1=N_1\setminus N_0$, $B_2=N_2\setminus N_1$, and $B=N_2\setminus N_0$.
In particular, $(N_1,N_0)$, $(N_2,N_1)$ and $(N_2,N_0)$ are index pairs for $B_1$, $B_2$ and $B$, respectively.
Together they form diagram~\eqref{eq:lefschetz_ar_split_inclusions}, 
    where $\imap{1}$, $\imap{2}$, $k$, $\lmap{1}$, and $\lmap{2}$ are inclusions, 
    and $\projmap{}$, $\projmap{1}$, $\projmap{2}$ are projections.
\begin{equation}\label{eq:lefschetz_ar_split_inclusions}
    \begin{tikzcd}
        & & B_2\arrow[lldd, hook', bend right, "\lmap{2}", swap]\\
        & & (N_2, N_1)\arrow[u,"\projmap{2}"]\\
        B & (N_2,N_0)\arrow[l,"\projmap{}"]\arrow[ru, hook, "\imap{2}"] &\\
        & & (N_1, N_0)\arrow[d,"\projmap{1}"]\arrow[lu, hook', "\imap{1}"]\arrow[uu, "k", hook, dashed] 
            & \\ 
        & & (B_1)\arrow[lluu, hook', bend left, "\lmap{1}"]
    \end{tikzcd}
\end{equation}
Map $\lmap{2}$, is the only map in diagram~\eqref{eq:lefschetz_ar_split_inclusions} which does not always induce a homomorphism in homology is $\lmap{2}$ in a direct way. 
This is due to the fact that $B_2$ is not necessarily closed in $B$; 
    therefore, a cycle in $\Cgroup(B_2)$ is not necessarily a cycle in $\Cgroup(B)$.

Maps $\imapx{1}$, $\imapx{2}$, and $k_\ast$ are homomorphisms induced by inclusions
    and $\projmapx{}$, $\projmapx{1}$, $\projmapx{2}$ are isomorphisms induced by projections given by Proposition~\ref{prop:lefschetz_relative_homology}.
We also write $g\coloneqq\lmapx{1}$ and 
    $f\coloneqq \projmapx{}\circ\imapx{2}^{-1}\circ\projmapx{2}^{-1}$, where $f$ is the homomorphism indirectly induced by $\lmap{2}$.
The maps are summarized in diagram~\eqref{eq:lefschetz_ar_split-appx}.
\begin{equation}\label{eq:lefschetz_ar_split-appx}
    \begin{tikzcd}
        & & H_d(B_2)\arrow[lldd, bend right, "f", swap]\\
        & & H_d(N_2, N_1)\arrow[u,"\projmapx{2}"]\\ 
        H_d(B) & H_d(N_2,N_0)\arrow[l,"\projmapx{}"]\arrow[ru, "\imapx{2}"] &\\
        & & H_d(N_1,N_0)\arrow[d,"\projmapx{1}"]\arrow[lu, "\imapx{1}"]\arrow[uu, "k_\ast", dashed]\\
        & & H_d(B_1)\arrow[lluu, bend left, "g"]
    \end{tikzcd}
\end{equation}

Triple $(N_0, N_1, N_2)$ induces a long exact sequence of a triple, and in consequence, $H_d(N_1,N_0)$, $H_d(N_2,N_1)$ and $H_d(N_2,N_0)$ split as presented in Diagram~\eqref{eq:lefschetz_ar_split-full};
    $\imapx{1}$ and $\imapx{2}$ also split 
    (see Proposition\ref{prop:B-to-B2-splits-into-0-f} or~\cite[Theorem~5.12]{CMbarcodes2025} for details).
Since $\projmapx{}$, $\projmapx{1}$, and $\projmapx{2}$ are isomorphisms $H_d(B_1)$, $H_d(B_2)$ and $H_d(B)$ split accordingly.
To simplify further discussion we write 
    $W\coloneqq H_d(B_1)$, $V\coloneqq H_d(B_2)$ and $X\coloneqq H_d(B)$, 
    $W'\coloneqq H_d(N_1,N_0)$, $V'\coloneqq H_d(N_2,N_1)$ and $X'\coloneqq H_d(N_2,N_0)$;
    moreover, we distinguish the split by writing $Y=Y_1\oplus Y_2$ for every $Y\in\{X,X',V,V', W,W'\}$.
We summarize these preparations in Diagram~\eqref{eq:lefschetz_ar_split-full}.
\begin{equation}\label{eq:lefschetz_ar_split-full}
    \begin{tikzcd}
        & & V_1\oplus V_2\arrow[lldd, bend right, "f=0\oplus\hat{f}", swap, sloped]
            & \projmapx{2}(V_1')\oplus \projmapx{2}(V_2') \ar[l, equal]  \\
        & & V_1'\oplus V_2'\arrow[u,"\projmapx{2}"]
            & \coker\imapx{2}\oplus \im\imapx{2} \ar[l,phantom, "\cong"]  \\
        X_1\oplus X_2 & X_1'\oplus X_2'\arrow[l,"\projmapx{}"]\arrow[ru, "\imapx{2}=0\oplus\hat{f}'", sloped] &\\
        & \im\imapx{1}\oplus\coker\imapx{1}\ar[u, phantom, "\cong", sloped] 
            & W_1'\oplus W_2'\arrow[d,"\projmapx{1}"]\arrow[lu, "\imapx{1}=\hat{g}'\oplus 0", sloped]\arrow[uu, "k_\ast", dashed]
            & \im \imapx{1}\oplus\ker\imapx{1} \ar[l,phantom, "\cong"]  \\
        & & W_1\oplus W_2\arrow[lluu, bend left, "g=\hat{g}\oplus 0", sloped]
            & \projmapx{1}(W_1')\oplus \projmapx{1}(W_2') \ar[l,equal]
    \end{tikzcd}
\end{equation}

\begin{proposition}\label{prop:B-to-B2-splits-into-0-f}
    We have $\imapx{1}\cong \hat{g}'\oplus 0$ and $\imapx{2}\cong 0\oplus \hat{f}'$, 
        where $\hat{g}':W_1'\rightarrow X_1'$ and $\hat{f}':X_2'\rightarrow V_2'$ are isomorphisms.  
\end{proposition}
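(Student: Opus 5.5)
The plan is to read both claims off the long exact sequence of the triple $(N_2,N_1,N_0)$ with $N_0=\mo B$, $N_1=B_1\cup N_0$, $N_2=\cl B$:
\[
\cdots\to H_d(N_1,N_0)\xrightarrow{\imapx{1}} H_d(N_2,N_0)\xrightarrow{\imapx{2}} H_d(N_2,N_1)\xrightarrow{\delta} H_{d-1}(N_1,N_0)\to\cdots
\]
First check that $N_0$, $N_1$, $N_2$ are closed and nested. $N_1$ is closed because $B_1$ is closed in $B$, so $\cl B_1\subset B_1\cup\mo B$. Hence the triple sequence exists in Lefschetz homology: it is the homology sequence of the short exact sequence of relative chain complexes $0\to \Cgroup(N_1,N_0)\to\Cgroup(N_2,N_0)\to\Cgroup(N_2,N_1)\to0$, which is exact at the chain level because all three are quotients of subcomplexes spanned by cells.

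\textbf{Splitting $\imapx{1}$.} Set $W_2'=\ker\imapx{1}$ and choose a complement $W_1'$, so $W'=W_1'\oplus W_2'$. Put $X_1'=\im\imapx{1}$. Then $\imapx{1}|_{W_1'}:W_1'\to X_1'$ is injective, since its kernel is $W_1'\cap\ker\imapx{1}=0$, and surjective onto the image by construction. Taking $\hat g'=\imapx{1}|_{W_1'}$ gives $\imapx{1}=\hat g'\oplus 0$.

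\textbf{Splitting $\imapx{2}$.} Exactness at $X'$ gives $\ker\imapx{2}=\im\imapx{1}=X_1'$. Choose a complement $X_2'$ and set $V_2'=\im\imapx{2}$. As before, $\imapx{2}|_{X_2'}$ is injective because $X_2'\cap\ker\imapx{2}=0$, and it is onto $V_2'$. With $V_1'$ any complement of $V_2'$, we get $\imapx{2}=0\oplus\hat f'$ with respect to $X_1'\oplus X_2'\to V_1'\oplus V_2'$.

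\textbf{Agreement with the earlier decompositions.} The essential point is that the direct-sum decomposition of $X'$ used as source of $\imapx{2}$ is the same one used as target of $\imapx{1}$, namely $X'=X_1'\oplus X_2'$ with $X_1'=\im\imapx{1}$. This agreement is exactly exactness at the middle term. The splittings then match those fixed in Section~\ref{sec:construction-of-structural-map-h}, so the identification is consistent with Proposition~\ref{prop:structure-of-i-j-ast}.

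The remaining bookkeeping points, none of which is a genuine obstacle, are these:
\begin{itemize}
\item The maps $\imap{1}$, $\imap{2}$ of diagram~\eqref{eq:lefschetz_ar_split_inclusions} coincide with $i'$, $j'$ of diagram~\eqref{eq:ar-split-inclusions}.
\item The symbol $\cong$ in the statement means equality after these choices of complements; the complements are non-canonical, but any choice works.
\item The construction transports through the isomorphisms $\projmapx{}$, $\projmapx{1}$, $\projmapx{2}$ of Proposition~\ref{prop:lefschetz_relative_homology} to the block level. This yields $g=\hat g\oplus 0$ and $f=0\oplus\hat f$ with $\hat f=(\hat f')^{-1}$ up to these isomorphisms.
\end{itemize}
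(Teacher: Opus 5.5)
Your proof is correct and follows the paper's argument. Both rest on exactness of the triple's long exact sequence at $H(N_2,N_0)$, which gives $\ker\imapx{2}=\im\imapx{1}$. The paper then identifies $X_2'\cong\coker\imapx{1}=X'/\ker\imapx{2}\cong\im\imapx{2}$ by the first isomorphism theorem, which is the same step as your restriction of $\imapx{2}$ to a complement of its kernel. Your explicit treatment of $\imapx{1}$ and your check that $N_1$ is closed fill in what the paper calls straightforward. That check also needs $\mo B$ to be closed, which holds since $B$ is locally closed.
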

\begin{proof}
	Long exact sequence of the triple $(N_0, N_1, N_2)$ implies $\im \imapx{1} = \ker \imapx{2}$, and therefore  
	\begin{equation*}
		X' \cong \im\imapx{1}\oplus\coker \imapx{1} 
			= \im\imapx{1}\oplus X'/\im \imapx{1} 
			= \im\imapx{1}\oplus X'/\ker \imapx{2}
			\cong \im\imapx{1}\oplus\im \imapx{2}.
	\end{equation*}
	Since $X_2'\cong\coker \imapx{1}$ and $V_2=\im \imapx{2}$, the above equation implies the isomorphism 
        $\hat{f}': X_2\rightarrow V_2$.
	Since $V_1'\cong\coker \imapx{2}$, we get $\imapx{2} \cong 0\oplus \hat{f}'$.
    The split of $\imapx{1}$ is straightforward.
    \qed
\end{proof}

The following result is straightforward, because $\projmapx{2}$ and $\projmapx{}$ are isomorphisms and the split of $\imapx{1}$ given by Proposition~\ref{prop:B-to-B2-splits-into-0-f}.

\begin{proposition}\label{prop:f-well-defined-apdx}
Homomorphism $f$ is well defined.
    Moreover, it splits into $f\cong 0\oplus \hat{f}$, where $\hat{f}:V_2\rightarrow X_2$ is an isomorphism.
\end{proposition}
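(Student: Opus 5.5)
The plan is to define $\hat f$ explicitly as the composite of isomorphisms, and to read off both well-definedness and the splitting from the decomposition of $\imapx{2}$ in Proposition~\ref{prop:B-to-B2-splits-into-0-f}.

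\emph{Well-definedness.} The only non-obvious ingredient in $f=\projmapx{}\circ\imapx{2}^{-1}\circ\projmapx{2}^{-1}$ is $\imapx{2}^{-1}$, since $\imapx{2}$ need not be invertible. I will read $\imapx{2}^{-1}$ as the map $V'\to X'$ defined componentwise with respect to $V'=V_1'\oplus V_2'$. It is zero on $V_1'$. On $V_2'=\im\imapx{2}$ it is the inverse of the isomorphism $\hat f'\colon X_2'\to V_2'$ from Proposition~\ref{prop:B-to-B2-splits-into-0-f}. Since $\hat f'$ is a bijection onto $V_2'$, this partial inverse is a linear map $V'\to X'$ with image $X_2'$. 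Once the complements $V_1'$ and $X_2'$ are fixed, it is uniquely determined. The maps $\projmapx{2}$ and $\projmapx{}$ are isomorphisms by Proposition~\ref{prop:lefschetz_relative_homology}. Hence $f$ is a composite of well-defined linear maps, and so it is well defined for the fixed choice of complements.

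\emph{Splitting.} The split of $V$ is the transport of the split of $V'$, that is, $V_i=\projmapx{2}(V_i')$; likewise $X_i=\projmapx{}(X_i')$. Take $v\in V_1$. Then $\projmapx{2}^{-1}(v)\in V_1'$, which is sent to $0$, so $f|_{V_1}=0$. On $V_2$ we get
\[
f|_{V_2}=\projmapx{}|_{X_2'}\circ(\hat f')^{-1}\circ\projmapx{2}^{-1}|_{V_2}.
\]
This is a composite of three isomorphisms $V_2\to V_2'\to X_2'\to X_2$. I therefore define $\hat f\coloneqq f|_{V_2}$, which gives $f\cong 0\oplus\hat f$ with $\hat f$ an isomorphism.

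\emph{Independence of choices.} The main care point is that the choices of complements are not canonical. The statement holds for any fixed choice, and the construction above produces an explicit $\hat f$ for each such choice. To connect with Section~\ref{sec:construction-of-structural-map-h}, I would also check that this $f$ agrees with the $f$ defined there via $j_\ast=\projmapx{2}\circ\imapx{2}\circ\projmapx{}^{-1}$ and the basis choices $S_{V_2}=j_\ast(S_{X_2})$. On basis vectors, both definitions send $j_\ast(x)$ to $x$ and send $S_{V_1}$ to $0$, so the two constructions coincide.
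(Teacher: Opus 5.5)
Your proposal is correct and follows the paper's own (one-line) argument. The paper likewise obtains well-definedness and the splitting $f\cong 0\oplus\hat{f}$ by interpreting $\imapx{2}^{-1}$ through the decomposition $\imapx{2}\cong 0\oplus\hat{f}'$ and transporting it along the isomorphisms $\projmapx{2}$ and $\projmapx{}$; you spell out what the paper calls straightforward, including the correct caveat that $f$ depends on the fixed complements $V_1'$ and $X_2'$.
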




With the below lemma we show that $f$ can be well characterized also at the chain level.
We write $\partial_B$ for the boundary operator with respect to Lefschetz complex $B$, 
    and $[c]_Y$ to denote homology group of chain $c$ within $Y\in\{X,V,W\}$.

\begin{lemma}\label{lem:the-reversed-homomorphism-apdx}
    Let $c\in\Zgroup(B_2)$ such that $c\neq 0$.
    The following statements hold:
    \begin{enumerate}[label=\arabic*)]
        \item\label{it:the-reversed-homomorphism-chain-exists-apdx}
            $[c]_V\in V_2$ (or $[c]_V\in\coim f$) if and only if there exists an $e\in\Cgroup(B_1)$ such that no subchain of $e$ is in $\Zgroup(B_1)$ and  $d\coloneqq c+e\in\Zgroup(B)$ and $[d]_X\in X_2$; in particular $f([c]_{V})= [d]_X$,
            \michal{no subcycle can also be skipped here}
            we call $e$ a \emph{complementary chain for $c$ with respect to $f$},
        \item\label{it:the-reversed-homomorphism-chain-e-apdx} 
            for any $e\in\Cgroup(B_1)$ such that $d\coloneqq c+e\in\Zgroup(B)$ and $[d]_X\in X_2$, 
            we have $f([c]_{V})= [d]_X$;
            \michal{the other version, that is $[d]_X$ is a basis element is not completely true, the basis cannot be arbitrary, but consistent with the split into $X_1$ and $X_2$}
            \michal{we can skip the subcycle assumption}
        \item\label{it:the-reversed-homomorphism-non-zero-apdx}
            $f([c]_{V})\not = 0$ $\Leftrightarrow$
            there exists $e\in\Cgroup(B_1)$ such that $d\coloneqq c+e\in\Zgroup(B)$
            $\Leftrightarrow$ $[\partial_B c]_W=0$
            \michal{the first implication $\Rightarrow$ is true only if $[c]_V\in V_2$}
    \end{enumerate}
\end{lemma}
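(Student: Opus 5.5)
The plan is to work throughout with the triple $N_0\coloneqq\mo B\subset N_1\coloneqq B_1\cup N_0\subset N_2\coloneqq\cl B$ and to translate each homology statement into a chain-level statement. We identify $\Cgroup(B)$ with $\Cgroup(N_2,N_0)$, $\Cgroup(B_2)$ with $\Cgroup(N_2,N_1)$ and $\Cgroup(B_1)$ with $\Cgroup(N_1,N_0)$. Under this identification the isomorphisms $\projmapx{}$, $\projmapx{1}$, $\projmapx{2}$ of Proposition~\ref{prop:lefschetz_relative_homology} are induced by restricting a chain to the relevant locally closed set.

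Under this dictionary, $\imapx{2}$ sends the class of a relative cycle $d\in\Zgroup(N_2,N_0)$ to the class of $d$ modulo $\Cgroup(N_1)$. Restricted to $B$, this is the map $d\mapsto d|_{B_2}$. By definition $f=\projmapx{}\circ(\imapx{2}|_{X_2'})^{-1}\circ\projmapx{2}^{-1}$ on $V_2$, and $f=0$ on $V_1$. By Proposition~\ref{prop:f-well-defined-apdx}, $f\cong 0\oplus\hat f$ with $\hat f$ an isomorphism.

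\textbf{Item 2).} Let $e\in\Cgroup(B_1)$ be such that $d=c+e\in\Zgroup(B)$ and $[d]_X\in X_2$. Then $d$ is a relative cycle of $(N_2,N_0)$, and since $e\in\Cgroup(N_1)$, we get $\imapx{2}([d])=[c]$ in $H(N_2,N_1)$. Because $[d]\in X_2'$ and $\imapx{2}|_{X_2'}=\hat f'$ is injective, $[d]$ is the unique preimage of $[c]$ in $X_2'$. Hence $f([c]_V)=[d]_X$. Along the way this shows $[c]_V\in V_2$, since it lies in the image of $\imapx{2}$.

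\textbf{Item 1).} For the direction ``$\Leftarrow$'', item 2) already gives $[c]_V\in V_2$. For ``$\Rightarrow$'', take $[c]_V\in V_2=\im\imapx{2}$ and let $x\in X_2'$ be its unique preimage. Choose a relative cycle $d'$ representing $x$. Then $d'-c\in\Cgroup(N_1)+\partial\Cgroup(N_2)$. Adjusting $d'$ within its class by the boundary term, we get $d'=c+e+m$ with $e\in\Cgroup(B_1)$ and $m\in\Cgroup(N_0)$. Restricting to $B$ gives $d=c+e\in\Zgroup(B)$ with $[d]_X\in X_2$.

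The condition ``no subchain of $e$ lies in $\Zgroup(B_1)$'' is the main obstacle. I would first try to handle it by removing from $e$ any cycle component $z\in\Zgroup(B_1)$. This keeps $d$ a cycle, but it changes $[d]_X$ by $[z]_X\in\im g=X_1$, so $d$ may no longer land in $X_2$. One then has to reprove that the $X_2$-component is unchanged; $\hat f$ only sees the $X_2$-part. If this fails, I would follow the margin note and drop the subcycle clause altogether, since items 2) and 3) do not need it.

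\textbf{Item 3).} If $f([c])\ne0$, then $[c]\in V_2$ modulo $V_1$, and the construction from item 1) produces $e$. Conversely, suppose $e$ exists with $c+e\in\Zgroup(B)$. Then $\imapx{2}([c+e])=[c]$, so $[c]\in\im\imapx{2}=V_2$ and $f([c])=\hat f([c])\neq0$, provided $c$ is a nonzero class; as the margin note warns, this step requires $[c]_V\in V_2$, and otherwise the implication must be stated modulo $V_1$.

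The second equivalence comes from the connecting homomorphism of the triple. We have $\partial_B c=\partial c|_{B_1}$, which is a relative cycle in $(N_1,N_0)$, so it defines a class $[\partial_B c]_W$. This class equals $\delta([c])$, where $\delta$ is the connecting map of the triple. By exactness, $\delta([c])=0$ if and only if $[c]\in\im\imapx{2}$, which holds if and only if $\partial_B c=-\partial_B e$ for some $e\in\Cgroup(B_1)$, that is, if and only if $c+e\in\Zgroup(B)$.
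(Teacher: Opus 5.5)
Your route is the paper's. You work in the triple $N_0\subset N_1\subset N_2$, lift the unique preimage of $[c]_V$ in $X_2$ to a chain $c+e$, use injectivity of $j_\ast|_{X_2}$ for item~2), and compute the second equivalence in~3) at chain level, using $\partial_B e=\partial_{B_1}e$ because $B_1$ is closed in $B$. Those parts are correct. Your item~2) is even complete where the paper's proof trails off at the case $[c]_V=0$; injectivity covers that case.

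The problems sit exactly where the statement itself is wrong, so no argument can close them.

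\textbf{The subcycle clause in~1).} Your diagnosis is decisive, not a technicality. With $c$ fixed, every admissible $d$ satisfies $j_\ast([d]_X)=[c]_V$, so $[d]_X$ is forced. Hence the admissible $e$ form a single coset $e_0+(\Zgroup(B_1)\cap\Bgroup(B))$: you may remove only those $B_1$-cycles that bound in $B$. Whether this coset has a subcycle-free member depends on the arbitrary complement $X_2$.

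Here is a counterexample. Let $B$ be two disjoint hollow triangles with $1$-cycles $\alpha$ (on $v_0,v_1,v_2$) and $\beta$ (on $w_0,w_1,w_2$). Set $B_2=\{w_0w_2\}$, $B_1=B\setminus B_2$ (closed), and $c=w_0w_2$. In degree~$1$ we have $X_1=\langle[\alpha]\rangle$, and $X_2=\langle[\alpha+\beta]\rangle$ is a legitimate complement. Also $[c]_V=j_\ast([\beta])\in V_2$. Any admissible $d$ must have $[d]_X=[\alpha+\beta]$. Since $B$ has no $2$-cells, this forces $d=\alpha+\beta$, so $e=\alpha+w_0w_1+w_1w_2$, which contains the cycle $\alpha\in\Zgroup(B_1)$.

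The paper's purging argument fails on this example too. Starting from $e=w_0w_1+w_1w_2$, one must \emph{add} $\alpha$, not delete a subchain. So dropping the clause is a necessary correction, and you should state it that way rather than as a fallback.

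\textbf{The first equivalence in~3).} Your forward step ``$[c]\in V_2$ modulo $V_1$, so the construction of~1) produces $e$'' is invalid. That construction needs $[c]_V\in\im j_\ast=V_2$ exactly. By your own connecting-map argument, $e$ exists if and only if $[c]_V\in V_2$. So take $[c]_V=v_1+v_2$ with $0\neq v_1\in V_1$ and $0\neq v_2\in V_2$: then $f([c]_V)=f(v_2)\neq0$, but no $e$ exists.

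The converse needs $[c]_V\neq0$, which $c\neq0$ does not give. If $c=\partial_{B_2}y\neq0$, then $e\coloneqq(\partial y)|_{B_1}$ gives $c+e=\partial_B y\in\Zgroup(B)$, while $f([c]_V)=0$. For instance, take the filled triangle $abx$ with $B_2=\{ax,abx\}$ and $c=ax$. In this direction $[c]_V\in V_2$ is automatic, so you have attached the margin note's caveat to the wrong implication.

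The correct statement is:
\begin{itemize}
\item[] $\exists e\Leftrightarrow[\partial_B c]_W=0\Leftrightarrow[c]_V\in V_2$;
\item[] $f([c]_V)\neq0$ if and only if the $V_2$-component of $[c]_V$ is nonzero.
\end{itemize}
These two conditions agree when $[c]_V$ is a nonzero element of $V_1\cup V_2$, for example a vector of $S_{V_1}\cup S_{V_2}$. That is exactly the hypothesis the paper adopts in Proposition~\ref{prop:reversing-the-split-map}. The paper's own proof of~3) makes the same two unjustified moves: it invokes~1) without knowing $[c]_V\in V_2$, and it asserts $[c]_V\neq0$.
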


\begin{proof}
    Throughout the proof we write $\overline{c}\coloneqq c+\Cgroup(N_1)\in\Zgroup(N_2,N_1)$ and $\overline{d}\coloneqq c+e+\Cgroup(N_0)\in\Zgroup(N_2,N_0)$.
    In particular, $\projmap{2}^{-1}(c) = \overline{c}$ and $\projmap{}(\overline{d})=d$.

    We first show existence of a chain $e$ mentioned in \ref{it:the-reversed-homomorphism-chain-exists-apdx}.
    Assume $[c]_V\in V_2$, then $[\overline{c}]\in\im\imapx{2}=V_2'$.
    Since $\imapx{2}$ is induced by inclusion there exists $e\in\Cgroup(B_1)$ such that for
        $\overline{d}\coloneqq c+e + \Cgroup(N_0)\in\Zgroup(N_2,N_0)$ we have $\imapx{2}([\overline{d}])=[\overline{c}]$.
    Clearly, $d\in\Zgroup(B)$.
    If $e$ contains a subchain $b$ which is a cycle in $B_1$ we can simply take $e'=e+b$ instead, because $d'\coloneqq c+e'\in\Zgroup(B)$ and $\imapx{2}([\overline{d'}])=[\overline{c}]_V$.
    We can continue purging all subcycles. 
    Assume that $e$ has no subcycles.
    We finally show that $[d]_X\in X_2$.
    If it is not true, we can decompose it into $d=c+e+a+a+b$, where $a$ is a subchain of $e$, $a+b\in\Zgroup(B_1)$ and $[c+e+a]_X\in X_2$ and $[a+b]\in X_1$.
    In other words, $[a+b]_X$ corresponds for the $X_1$ component of $[d]_X$.
    Thus, we can take $e'\coloneqq e+a$ instead of $e$. 
    Since $a$ is a subchain of $e$, we only remove elements, and therefore no extra subcycle can be introduced. 
    The opposite inclusion in \ref{it:the-reversed-homomorphism-chain-exists-apdx} is straightforward.

    To show~\ref{it:the-reversed-homomorphism-chain-e-apdx} consider $e,e'\in\Cgroup(B_1)$ such that $d\coloneqq c+e$ and $d'\coloneqq c+e'$ are in $\Zgroup(B)$ and $[d]_X,[d']_X\in X_2$. 
    Clearly, $\imap{2}(\overline{d})=\imap{2}(\overline{d'})=c$
        and therefore, $\imapx{2}([\overline{d}]_{X'})=\imapx{2}([\overline{d'}]_{X'})=[\overline{c}]_{V'}$.
    If $[c]_V\neq 0$, then by Proposition~\ref{prop:B-to-B2-splits-into-0-f} we have $[\overline{c}]_{V'}\in V_2'$ and necessarily $[\overline{d}]_{X'}=[\overline{d'}]_{X'}$.
    If $[c]_V= 0$   
    
    To show~\ref{it:the-reversed-homomorphism-non-zero-apdx} notice that  $f([c]_V)\neq 0$ implies existence of $e$ by \ref{it:the-reversed-homomorphism-chain-exists-apdx}. 
    Since $\partial_B c, \partial_B e\in\Cgroup(B_1)$, we have $0=\partial_B d= \partial_B c + \partial_B e$, which implies $\partial_B c = \partial_B e$. 
    Thus, $\partial_B c\in\Bgroup(B_1)$, and therefore $[\partial_B c]_W=0$.
    
    Finally assume that $[\partial_B c]_W=0$.
    If follows that $a\coloneqq\partial_B c\in\Bgroup(B_1)$ and that there exists $e\in\Cgroup(B_1)$ such that $a=\partial_B(e)$.
    Therefore, we have $d\coloneqq c+e\in\Zgroup(B)$.
    Since, $\imap{2}(d)=c$ and $0\neq[\overline{c}]_{V'}]\in V_2'$, we get $f([c]_V)\neq 0$ from Proposition~\ref{prop:f-well-defined-apdx}.
    \qed
\end{proof}

\begin{corollary}
    If $B_1$ or $B_2$ is a singleton then \ref{lem:the-reversed-homomorphism-apdx} we can simplify the assumptions to $e\in\Cgroup(B)$ instead of $e\in\Cgroup(B_1)$ in each statement.
\end{corollary}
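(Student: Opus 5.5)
We will reduce the corollary to Lemma~\ref{lem:the-reversed-homomorphism-apdx} by showing that an arbitrary $e\in\Cgroup(B)$ can be traded for a chain in $\Cgroup(B_1)$ without changing the relevant classes. Given $e\in\Cgroup(B)$ with $d\coloneqq c+e\in\Zgroup(B)$, we write $e=e_1+e_2$ with $e_1\in\Cgroup(B_1)$ and $e_2\in\Cgroup(B_2)$.

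The key structural fact is that $B_1$ is closed in $B$. Hence the restriction $\Cgroup(B)\to\Cgroup(B_2)$, which kills $B_1$, is a chain map. Applying it to $d$ gives $\partial_{B_2}(c+e_2)=0$. Since $\partial_{B_2}c=0$, we get $e_2\in\Zgroup(B_2)$. So in every statement of the lemma, $c$ is effectively replaced by $c'\coloneqq c+e_2\in\Zgroup(B_2)$, and $d=c'+e_1$ has its complementary part in $\Cgroup(B_1)$. The task is therefore to compare $c'$ with $c$. We split into the two singleton cases.

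\textbf{Case $B_2=\{\sigma\}$.} Here $\Cgroup(B_2)$ is one-dimensional and has no boundaries, so $c=\lambda\sigma$ and $e_2=\mu\sigma$. We first exclude $\lambda+\mu=0$. In that situation $d=e_1\in\Zgroup(B_1)$, so $[d]_X\in\im g=X_1$. The condition $[d]_X\in X_2$ then forces $[d]_X=0$, a degenerate case that carries no information about $c$. We would handle it by noting that the lemma's conclusions concern the restriction of $d$ to $B_2$, which is zero here, so this $e$ is simply not an admissible witness. Otherwise $c'$ is a nonzero multiple of $c$. Rescaling $d$ by $\lambda(\lambda+\mu)^{-1}$ produces $c+\tilde e$ with $\tilde e\in\Cgroup(B_1)$, and the lemma applies directly; linearity of $f$ transports the conclusions back.

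\textbf{Case $B_1=\{\tau\}$.} Now $e_1=\lambda\tau$, and $e_2$ is an arbitrary cycle of $B_2$. We use linearity: $f([c'])=f([c])+f([e_2])$. The lemma applies to $c'$ with complementary chain $e_1\in\Cgroup(B_1)$, giving $f([c'])=[d]_X$ under the $X_2$ hypothesis. It remains to show that $f([e_2])$ is absorbed. Since $B_1=\{\tau\}$, we have $\partial_B e_2=\mu\tau$.

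\begin{itemize}
\item If $\mu=0$, then $e_2\in\Zgroup(B)$ and, by statement~\ref{it:the-reversed-homomorphism-chain-e-apdx} with trivial complement, $f([e_2]_V)$ is the $X_2$-component of $[e_2]_X$. Subtracting it from both sides gives the claim for $c$.
\item If $\mu\neq0$, then $\tau$ is a boundary in $B$. Consequently $W$ (which is $0$ or $\Bbbk[\tau]$) maps to zero, $X_1=0$, and $X=X_2$; the same subtraction argument goes through, with $e_1$ adjusted by a multiple of $\tau$.
\end{itemize}

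Statement~\ref{it:the-reversed-homomorphism-non-zero-apdx} then follows in both cases from the same decomposition. We expect the main obstacle to be the bookkeeping in the $B_1=\{\tau\}$ case: making sure that the $X_2$-condition on $[d]_X$ is preserved when $e_2$ is moved from the complement into $c$. We would first try to settle it purely through the decomposition $X=X_1\oplus X_2$ and the identity $f=0\oplus(j_\ast|_{X_2})^{-1}$, avoiding any explicit choice of representatives.
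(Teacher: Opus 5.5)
There is a genuine gap, and it cannot be closed: read literally, the corollary is false. The paper gives no proof of it. Your opening step actually shows why. Write $e=e_1+e_2$ with $e_i\in\Cgroup(B_i)$. Closedness of $B_1$ in $B$ gives $e_2\in\Zgroup(B_2)$, and the lemma applied to $d=(c+e_2)+e_1$ yields $f([c+e_2]_V)=[d]_X$, i.e.\ $f([c]_V)=[d]_X-f([e_2]_V)$. So the corollary amounts to $f([e_2]_V)=0$ for every admissible $e$. Neither singleton hypothesis gives that, and each of your case arguments breaks exactly at this term.

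\textbf{Case $B_2=\{\sigma\}$.} The witnesses with $\lambda+\mu=0$ (e.g.\ $e=-c$, so $d=0$ and $[d]_X=0\in X_2$) satisfy every hypothesis of the modified statement. Calling them ``not admissible'' therefore adds a hypothesis the corollary does not have. Over $\mathbb{Z}_2$ they are the only witnesses not already in $\Cgroup(B_1)$, so what is left of your argument is just the lemma. Your rescaling also only yields $f([c]_V)=\lambda(\lambda+\mu)^{-1}[d]_X$, which differs from $[d]_X$ when $\mu\neq0$ and $[d]_X\neq0$.

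\textbf{Case $B_1=\{\tau\}$.} The ``subtraction'' in your first bullet gives $f([c]_V)=[d]_X-\pi_{X_2}[e_2]_X$, where $\pi_{X_2}$ is the projection along $X_1$, and this extra term need not vanish. The second bullet has the same defect.

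\textbf{Counterexample.} Over $\mathbb{Z}_2$ let $B=\{v,va,vb\}$ with $a,b\notin B$. Both splits below arise by refining the single multivector $\{v,va,vb\}$ on the path with vertices $a,v,b$ and edges $va,vb$. Here $H(B)=\Bbbk[va+vb]$, concentrated in degree~$1$.
\begin{itemize}
\item For the split $B_1=\{v,va\}$, $B_2=\{vb\}$, we get $X_1=0$ and $f([vb]_V)=[va+vb]_X\neq0$. Yet $c=vb$, $e=-c$ gives $d=0$, so the modified statement~2 would force $f([vb]_V)=0$.
\item For the split $B_1=\{v\}$, $B_2=\{va,vb\}$, we get $X_1=0$, $V_2=\Bbbk([va]+[vb])$ and $f([va]+[vb])=[va+vb]$. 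The pairs $(c,e)=(va,vb)$ and $(vb,va)$ would give $f([va])=f([vb])=[va+vb]$, hence $f([va]+[vb])=0$. This contradiction holds regardless of which complement $V_1$ was fixed.
\end{itemize}
The modified statement~3 is also vacuous, since $e=-c$ always makes $c+e$ a cycle.

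What your decomposition does establish is the corrected claim $f([c+e_2]_V)=[d]_X$. The original conclusion for $c$ holds precisely when $f([e_2]_V)=0$, for instance when $e_2$ is a boundary in $B_2$. For $B_2=\{\sigma\}$ that simply means $e\in\Cgroup(B_1)$.
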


\begin{corollary}\label{cor:splitting-into-im-g-im-f-apdx}
    $X=X_1\oplus X_2=\im g\oplus\im f$;
    $V=V_1\oplus V_2=\ker f\oplus\coim f$;
    $W=W_1\oplus W_2=\coim g\oplus\ker g$.
\end{corollary}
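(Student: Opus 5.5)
The plan is to read off all three decompositions directly from the block form of $f$ and $g$. The splits $X=X_1\oplus X_2$, $V=V_1\oplus V_2$ and $W=W_1\oplus W_2$ were fixed in Section~\ref{sec:construction-of-structural-map-h}, and by construction $g=(i_\ast|_{W_1})\oplus 0$ and $f=0\oplus (j_\ast|_{X_2})^{-1}$ with respect to them. So the whole task is to identify the image, kernel and coimage of a direct sum of an isomorphism and a zero map.

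For $g$ the key input is Proposition~\ref{prop:structure-of-i-j-ast}. Since $i_\ast|_{W_1}\colon W_1\to X_1$ is an isomorphism and $g|_{W_2}=0$, we get $\im g=i_\ast(W_1)=X_1$ and $\ker g=W_2$. The coimage $W/\ker g$ is then identified with the chosen complement $W_1$ through the projection $W=W_1\oplus W_2\to W_1$. This gives $W=\coim g\oplus\ker g$.

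For $f$ I would invoke Proposition~\ref{prop:f-well-defined-apdx}, which says $f\cong 0\oplus\hat f$ with $\hat f\colon V_2\to X_2$ an isomorphism; equivalently, it follows from the definition of $f$ on the bases $S_{V_1}$ and $S_{V_2}$. I would check three things directly. First, $\ker f=V_1$: $f$ kills $S_{V_1}$, and it is injective on $V_2$ because it sends the basis $S_{V_2}$ bijectively onto $S_{X_2}$. Second, $\im f=X_2$, for the same reason. Third, $\coim f=V/V_1\cong V_2$. Combining these with $X_1=\im g$ gives $X=\im g\oplus\im f$ and $V=\ker f\oplus\coim f$.

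The only delicate point, and the step I expect to be the obstacle, is the meaning of ``$=$'' for coimages. A coimage is a quotient, not a subspace, so the equalities $V_2=\coim f$ and $W_1=\coim g$ must be read as ``the fixed complement $V_2$ (respectively $W_1$) maps isomorphically onto $V/\ker f$ (respectively $W/\ker g$) under the quotient map.'' I would state this convention explicitly. I would also note that $\im g$, $\im f$ and the kernels are genuine, choice-independent subspaces given by Proposition~\ref{prop:structure-of-i-j-ast}. By contrast, $X_2$ itself depends on the choice of basis, but it is exactly $\im f$ because $f$ was defined using that same choice. Hence the decomposition $X=\im g\oplus\im f$ is consistent, and it is direct because $X_1\cap X_2=0$ by construction.
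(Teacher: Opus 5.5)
Your proposal is correct and is essentially the paper's argument: the corollary is read off directly from the block forms $g=(i_\ast|_{W_1})\oplus 0$ and $f=0\oplus (j_\ast|_{X_2})^{-1}$ (Proposition~\ref{prop:structure-of-i-j-ast} together with the appendix statement $f\cong 0\oplus\hat f$), with $\coim$ read, as you say, through the fixed complement mapping isomorphically onto the quotient. One small inaccuracy that does not affect the argument: only $\im g=X_1$ and $\ker g=W_2$ are choice-independent, whereas $\im f=X_2$ and $\ker f=V_1$ depend on the chosen complements because $f$ itself does, so your earlier remark that $\im f$ and ``the kernels'' are choice-independent contradicts your own later (correct) observation about $X_2$.
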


\hspace{0.5cm}
The following remark is a standard result.
\begin{proposition}\label{prop:chains-in-map-g}
    Let $c\in\Zgroup(B_1)$.
    Then $g([c]_{W})=0$ if and only if there exists $d\in\Cgroup(B)$ such that $\partial_B d = c$.
\end{proposition}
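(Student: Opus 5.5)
The plan is to reduce the statement to the observation that, when $\bl_1$ is closed in $\bl$, the map $g$ is simply the map induced on homology by the chain-level inclusion $\Cgroup(\bl_1)\hookrightarrow\Cgroup(\bl)$. Once that is settled, $g([c]_W)=[c]_X$, and the claimed equivalence is just the definition of a class vanishing in $H(\bl)$. So the argument has two parts: first check that $g$ is induced by inclusion at the chain level, then unwind what $[c]_X=0$ means.

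\emph{Step 1 (chain map).} Since $\bl_1$ is closed in $\bl$, every face in $\bl$ of a cell of $\bl_1$ lies in $\bl_1$. Hence for $\sigma\in\bl_1$ we have $\partial_\bl\sigma=\sum_{\tau\in \bl}\inc(\sigma,\tau)\tau=\partial_{\bl_1}\sigma$. Therefore $\Cgroup(\bl_1)$ is a subcomplex of $(\Cgroup(\bl),\partial_\bl)$, and the inclusion is a chain map. In particular a cycle $c\in\Zgroup(\bl_1)$ is also a cycle in $\Zgroup(\bl)$, so $[c]_X$ makes sense.

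\emph{Step 2 (agreement with the definition of $g$).} By definition $g=i_\ast=\psi\circ i'_\ast\circ\phi_1^{-1}$, using the index pairs $(N_1,N_0)\subset(N_2,N_0)$ with $N_0=\mo\bl$, $N_1=\cl\bl_1\cup\mo\bl$ and $N_2=\cl\bl$. I will check that the isomorphisms of Proposition~\ref{prop:lefschetzhom} are realised on chains by $c\mapsto c+\Cgroup(N_0)$, with inverse given by restricting a relative chain to the cells of the block; this uses $\bl_1=N_1\setminus N_0$ and $\bl=N_2\setminus N_0$. Under these identifications, $\phi_1^{-1}([c]_W)=[c+\Cgroup(N_0)]$. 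The map $i'_\ast$ sends this to the class of the same relative chain in $(N_2,N_0)$, and $\psi$ returns $[c]_X$. Hence $g([c]_W)=[c]_X$.

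\emph{Step 3 (conclusion).} Now $g([c]_W)=0$ holds iff $c\in\Bgroup(\bl)$, that is, iff there is $d\in\Cgroup(\bl)$ with $\partial_\bl d=c$. Both directions follow immediately.

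The only step needing care is Step 2, namely confirming that the excision-type isomorphism of Proposition~\ref{prop:lefschetzhom} commutes with the inclusions, so that no change of representative creeps in. I would verify this directly on chains. The key facts are that the boundary in $(N_2,N_0)$ of a chain supported in $\bl$ agrees with $\partial_\bl$ modulo $\Cgroup(N_0)$, and that the same holds for $(N_1,N_0)$ and $\bl_1$; both hold because $\bl=N_2\setminus N_0$ and $\bl_1=N_1\setminus N_0$, with $N_0$ and $N_1$ closed.
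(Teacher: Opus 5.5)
Your proposal is correct and follows the paper's reasoning. The paper gives no separate proof: it calls this a standard remark, and elsewhere (e.g.\ in the proof of Proposition~\ref{prop:single-step-of-the-expansion}) it uses that $g$ is induced by the chain-level inclusion $\Cgroup(\bl_1)\hookrightarrow\Cgroup(\bl)$, so $g([c]_W)=[c]_X$, which vanishes exactly when $c$ bounds in $\bl$. Your Step~2 makes explicit what the paper takes for granted, namely that the natural chain-level identifications of Proposition~\ref{prop:lefschetzhom} commute with the index-pair inclusion $(N_1,N_0)\hookrightarrow(N_2,N_0)$.
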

}

\end{document}